\newtheorem{thm}{Theorem}[section]
\newtheorem{cor}[thm]{Corollary}
\newtheorem{lem}[thm]{Lemma}
\newtheorem{prop}[thm]{Proposition}
\newtheorem{remark}[thm]{Remark}
\newtheorem{assump}[thm]{Assumption}
\newtheorem{corollary}[thm]{Corollary}
\numberwithin{equation}{section}
\def\span{\textrm{ span }}
\def\C{ {\mathbb C} }
\def\R{ {\mathbb R} }
\def\H{ {\mathcal{H}} }
\def\Z{ {\mathbb Z} }
\def\Rd{ {\mathbb R}^d }
\def\T{ {\mathbb T} }
\DeclareMathOperator{\Tr}{Tr}
\begin{document}

\setstcolor{red}

\title 
{Phase retrieval of complex and vector-valued functions}

\author{Yang Chen, Cheng Cheng  and Qiyu Sun}

\address{Chen: Key Laboratory of Computing and Stochastic Mathematics (Ministry of Education), School of Mathematics and Statistics,
 Hunan Normal University, Changsha, Hunan 410081, P. R. China,
 email: ychenmath@hunnu.edu.cn}
\address{Cheng:  Department of Mathematics, Duke University and Statistical and
Applied Mathematical Sciences Institute (SAMSI), Durham, NC 27708,
email:  cheng87@math.duke.edu}

\address{Sun: Department of Mathematics, University of Central Florida, Orlando, FL 32816, email: qiyu.sun@ucf.edu}

\thanks{This project is partially supported by  the National Science Foundation (DMS-1638521 and DMS-1816313), Hunan Province Science Foundation for Youth (2018JJ3329) and  Scientific Research Fund of Hunan Provincial Education Department(18C0059). }
\maketitle
\begin{abstract}
 The phase retrieval problem  in the  classical  setting is to \mbox{reconstruct}  real/complex functions from the magnitudes of their
 Fourier/frame measurements.
In this paper, we consider a new  phase retrieval paradigm in the complex/quaternion/vector-valued  setting,
and we provide  several characterizations to determine complex/quaternion/vector-valued functions  $f$ in a  linear space ${\mathcal S}$
 of (in)finite dimensions,  up to a trivial ambiguity, from the magnitudes  $\|\phi(f)\|$ of their linear measurements $\phi(f), \phi\in \Phi$.
Our characterization in the scalar setting implies
 the well-known equivalence between the complement property for linear  measurements $\Phi$ and the phase
retrieval of  linear space ${\mathcal S}$.
In this paper, we also discuss  the affine phase retrieval of vector-valued functions in a linear space
  and  the reconstruction of vector fields on a graph, up to an orthogonal matrix,
from their absolute magnitudes at vertices and relative magnitudes between neighboring  vertices.
 \end{abstract}

 \section{Introduction}
Phase retrieval arises in  various engineering fields, such as X-ray crystallography, coherent diffractive imaging and optics
\cite{F78,    hayes80,  H01,  millane90}. 
  The classical  phase retrieval problem is to recover real/complex functions from the magnitudes of their Fourier measurements.
 Starting from the pioneering work  \cite{BCE06} by  Balan,   Casazza and Edidin,  phase retrieval of
 real vectors  ${\bf x}\in  \R^d$ (or complex vectors ${\bf x}\in \C^d$)
 from the magnitudes  ${\bf y}=|{\bf A} {\bf x}|$ of their frame measurements
 has received considerable attention, where
 ${\bf A}$ is a measurement  matrix, see \cite{ABFM14,BBCE09,  Bandeira14, 
 CSV12, GSXW, jaganathany15, bcjhl19, 
    schechtman15, W14, WX17}  for historical remarks and  additional references.
    The  phase retrieval paradigm has been recently extended to
    infinite-dimensional setting, where a core problem
   is to recover real/complex functions  $f$ in a linear space of (in)finite dimensions, such as the Paley-Wiener space and shift-invariant spaces,
 from the magnitudes   $|\phi(f)|$ of their  linear measurements  $\phi(f), \phi\in \Phi$, see
      \cite{ADGY16, RGrohs17,  CCD16, YC16,
   cjs17, CS18, grohs17, mallat15,  Mcdonald, pohl2014,  schechtman15,  shenoy16, Thakur11}.
   The phase retrieval in the infinite-dimensional setting is  fundamentally different from the  finite-dimensional setting
   \cite{RGrohs17, CCD16,YC16, grohs17}, for instance,  phase retrieval
    in an infinite-dimensional Hilbert space is  coherently  unstable \cite{CCD16}, and
    the set of phase retrieval  functions in
    a real shift-invariant space   $S(\psi)$ generated by a compactly supported function $\psi$
    is observed to be neither a convex subset of $S(\psi)$ nor its closed subset  \cite{YC16, cjs17, CS18}.

%

%

Quaternions form a  noncommutative division algebra,  and they are 
used in  navigation,  robotics and computer vision
 \cite{altmann86, kuipers98}.
In the first part of this paper, we consider  the phase retrieval problem  whether
a complex/quaternion function $f$ on a domain $D$ in
a linear space of (in)finite dimensions is determined, up to a  unimodular constant and conjugation, from the magnitudes  $|f(x)|, x\in D$.


An important problem in the dynamic of a fleet of autonomous mobile robots (AMR) is  to determine  the velocity of each  AMR from  the absolute speed of AMRs and the relative
speed  between neighboring  AMRs.   
 As the topology of an AMR fleet can be described by a graph ${\mathcal G}=(V, E)$,  the  velocity  recovery
problem for an AMR fleet becomes whether a vector field  ${\bf f}=({\bf f}_i)_{i\in V}$ on the graph ${\mathcal G}$ can be reconstructed, up to
  an orthogonal matrix,  from its absolute magnitudes
$\|{\bf f}_i\|$ at all vertices $i\in V$ and relative magnitudes $\|{\bf f}_i-{\bf f}_j\|$
of neighboring vertices $(i,j)\in E$.
 In the second part of this paper, we consider  the phase retrieval problem  whether
a vector-valued function $f$ 
 in
a real  
linear space ${\mathcal S}$ of (in)finite dimensions is determined, up to a unitary transformation, from the magnitudes
$\|\phi(f)\|$  of their linear measurements $ \phi(f), \phi\in \Phi$,  and also the reconstruction of a vector field on a  undirected graph,
 up to an orthogonal matrix,
from its  absolute magnitudes at vertices and relative magnitudes between neighboring  vertices.
Our characterization in the real scalar setting  implies
the well-known equivalence between the complement property for linear  measurements $\Phi$ and the phase retrieval of
 linear space ${\mathcal S}$ \cite{RGrohs17, BBCE09, BCE06, CCD16}.

 Affine phase retrieval arises in holography, data separation and phaseless sampling
\cite{CMVG2010,  YC16, DK2013,  DH2001, LBCMDU2003,  LLS2013},
and one of its core problems is whether real/complex  functions are determined uniquely from  their magnitudes of affine linear measurements
 \cite{GSXW, HX18,  KST95, CLS19,
 Mcdonald}.  In the  third part of this paper, we study
affine phase retrieval of vector-valued functions. 

 This paper is organized as follows. In Sections \ref{complexpr.section} and \ref{pr.vector.sec},
 we consider  the phase retrieval  of complex/quaternion functions and vector-valued functions respectively.
  In Section \ref{affine.pr.sec}, we discuss the affine phase retrieval of vector-valued functions.
  All proofs are collected in Section  \ref{proofs.section}.

\section{Phase retrieval  of complex/quaternion functions}
\label{complexpr.section}

\smallskip

 Let  ${\mathcal C}$  be a complex linear space of  functions  $f$ on a domain $D$ that is  {\em invariant under complex conjugation}, i.e. $
 \bar f\in \mathcal C$ for all $f\in \mathcal C$.
 Our representative examples of complex conjugate invariant spaces are
the  complex range space
\begin{equation}\label{complexrange.def}
R_{\C}({\bf A})= \big \{{\bf A}{\bf x}, \ {\bf x}\in  \C^n\big\}
\end{equation}
of a real matrix  ${\bf A}$ of size  $m\times n$,  and
the complex shift-invariant space
\begin{equation}\label{complexsis.def}
S_{\C}(\psi)=\Big\{\sum_{k\in \Z} c(k) \psi (\cdot-k), \ c(k)\in \C \ {\rm for \ all} \  k\in \Z\Big\}
\end{equation}
generated by a real-valued function $\psi$ on the real line $\R$ \cite{AG01, AST05, DDR94}.

Given $f\in {\mathcal C}$, let
\begin{equation}\label{complexpr.def}
{\mathcal M}_f:=\{g\in {\mathcal C}, \   |g(x)|=|f(x)| \ {\rm for \ all} \  x\in D\}
\end{equation}
contain all  functions $g\in {\mathcal C}$  that have the same magnitude measurements as the original function $f$ has on the whole domain $D$.
Clearly, 
\begin{equation}
{\mathcal M}_f\supset  \{ z f\in {\mathcal C}, \ z\in \T\}\cup\{  z \bar f\in {\mathcal C}, \ z\in \T\},
\end{equation}
where $\T=\{z\in \C, |z|=1\}$.
We say that a function  $f\in {\mathcal C}$ is {\em complex conjugate phase retrieval} in ${\mathcal C}$ if
\begin{equation}\label{complexpr.def}
{\mathcal M}_f= 
\{ z f\in {\mathcal C}, \ z\in \T\}\cup\{  z \bar f\in {\mathcal C}, \ z\in \T\},
\end{equation}
and that  the  linear space ${\mathcal C}$ is {\em  complex conjugate  phase retrieval}  if every function $f\in {\mathcal C}$ is
 complex conjugate  phase retrieval in ${\mathcal C}$, see \cite{Lai, LLW19,  Mcdonald}  for complex conjugate phase retrieval  of  vectors in a finite-dimensional
linear  space and of entire functions.


In Section \ref{cjpr.section}, we  characterize complex conjugate phase retrieval of functions
$f$ in a complex conjugate invariant space ${\mathcal C}$, see Theorems \ref{complexpr.thm} and \ref{complexpr.thm2}.
 Write ${\bf A}^T=({\bf a}_1, \ldots, {\bf a}_m)$.
As an application of Theorem \ref{complexpr.thm2}, we 
show  that the complex range space
${ R}_{\C}({\bf A})$ in \eqref{complexrange.def}
is complex conjugate phase retrieval if and only if 
there does not exist
a real matrix ${\bf X}$ of rank at most 2 such that
${\bf X}^T\ne  -{\bf  X}$ and $\Tr( {\bf a}_i {\bf a}_i^T  {\bf X} ) =0$ for  all $1\le i\le m$,
  see  Corollary \ref{corollary2.4} and \cite[Theorem 2.3]{Lai} for an equivalent formulation.


Let
$\Re({\mathcal C})$ be the linear space of all real-valued functions in ${\mathcal C}$. By  the complex conjugate
invariance  of the complex linear space ${\mathcal C}$, we have
\begin{equation}\label{realspace.def}
\Re({\mathcal C})=\{(f+\bar f)/2, \ f\in {\mathcal C}\}.
\end{equation}
 We say that a function  $f \in \Re({\mathcal C})$
 is {\em phase retrieval} in $\Re({\mathcal C})$
  if  
  $${\mathcal M}_f:=\{g\in \Re({\mathcal C}), \   |g(x)|=|f(x)| \ {\rm for \ all} \   x\in D\}$$
  contains only two elements $\pm f$,
and that the  whole real  linear subspace $\Re({\mathcal C})$ is {\em  phase retrieval} if every function $f\in \Re({\mathcal C})$ is  phase
 retrieval in $\Re({\mathcal C})$, i.e., $|f(x)|=|g(x)|, x\in D$ for $f, g\in \Re({\mathcal C})$ only when  $f=\pm g$,
 \cite{ADGY16, YC16, cjs17, CS18, lh18, mallat15, sun17,   WAM15}.
 In Section \ref{ccprrpr.section}, we show that a complex conjugate phase retrieval   function in ${\mathcal C}$ has its
 real part being  phase retrieval in  $\Re({\mathcal C})$, and hence
 $\Re({\mathcal C})$ is  phase retrieval if
  ${\mathcal C}$
is complex conjugate phase retrieval, see Theorem \ref{necess.charac} and Corollary \ref{necess.charac.cor}.
As an application of Theorem \ref{necess.charac}, we find all complex conjugate phase retrieval functions in
a complex shift-invariant space $S_\C(h)$ 
generated by the hat function
$h(t)=\max(1-|t|, 0)$, see Proposition \ref{complex.sis.pr}.

 The set
$\mathcal Q_8=\{a+b{\bf i}+c{\bf j}+d{\bf k},\ a,\ b,\ c,\ d\in\R\}$ of quaternions
forms a {\em noncommutative} division algebra
 with  the  multiplication rule for the basis $1, {\bf i}, {\bf j}$ and ${\bf k}$ given by
$$ {\bf i}^2={\bf j}^2={\bf k}^2=-1, \ {\bf ij}=-{\bf ji}={\bf k},\ {\bf jk}=-{\bf kj}={\bf i} \ {\rm and} \  {\bf ki}=-{\bf ik}={\bf j}.$$
Quaternions  provide a more compact,  numerically stable, and  efficient representation of orientations and rotations of objects
in  $\R^3$, and they have  many engineering applications such as  navigation,  robotics and computer vision
 \cite{altmann86, kuipers98}.
To our  knowledge,   phase  retrieval  of
quaternion-valued functions on a domain  $D$ has not been discussed in literature.
  In  Section  \ref{quaternionpr.section},
 we   generalize the conclusions in  Theorems \ref{complexpr.thm}, \ref{complexpr.thm2}  and  \ref{necess.charac}
 for complex functions to  quaternion-valued functions.


\subsection{Complex conjugate phase retrieval}\label{cjpr.section}  For functions in a complex conjugate invariant linear space, we have
 the following characterization to the complex conjugate phase retrieval,  see Section \ref{complexpr.thm.section} for the proof.

\begin{thm}\label{complexpr.thm}  Let ${\mathcal C}$ be a complex linear space of functions on a domain $D$
that is invariant under complex conjugation, and let $f\in {\mathcal C}$. Then $f$ is complex conjugate phase retrieval
if and only if there do not  exist $u, v\in {\mathcal C}$
satisfying
\begin{equation}\label{complex.decom.1}
f=u+v,
\end{equation}
\begin{equation}\label{complex.decom.2}
 \Re(u(x) \bar v(x))=0\ \
\mbox{\rm for\  all}\ x\in D,
\end{equation}
and
\begin{equation}\label{complex.decom.3}
\Re(u(x) \bar v(y)+u(y) \bar v(x))\ne 0 \ \
\mbox{\rm for\ some}\ x, y\in D.
\end{equation}
\end{thm}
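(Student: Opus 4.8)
The plan is to pass between candidate solutions $g$ and decompositions of $f$ via $u=(f+g)/2$ and $v=(f-g)/2$, i.e. $f=u+v$ and $g=u-v$. Write $F(x,y)=f(x)\overline{f(y)}$ and $G(x,y)=g(x)\overline{g(y)}$. The identity $|u+v|^2-|u-v|^2=4\Re(u\bar v)$ shows that \eqref{complex.decom.1}--\eqref{complex.decom.2} hold for $u,v\in\mathcal C$ exactly when $g=u-v$ lies in $\mathcal M_f$. Expanding $F(x,y)-G(x,y)=2\bigl(u(x)\overline{v(y)}+v(x)\overline{u(y)}\bigr)$ and using $\Re(v(x)\overline{u(y)})=\Re(u(y)\overline{v(x)})$ shows that \eqref{complex.decom.3} is equivalent to $\Re\bigl(f(x)\overline{f(y)}\bigr)\ne\Re\bigl(g(x)\overline{g(y)}\bigr)$ for some $x,y\in D$. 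So the theorem reduces to the claim: $f$ fails to be complex conjugate phase retrieval if and only if some $g\in\mathcal M_f$ has $\Re(G)\not\equiv\Re(F)$ on $D\times D$.

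One direction is immediate: if $g=zf$ then $G\equiv F$, and if $g=z\bar f$ then $G\equiv\overline{F}$, so in both trivial cases $\Re(G)\equiv\Re(F)$; hence any $g\in\mathcal M_f$ with $\Re(G)\not\equiv\Re(F)$ is a nontrivial element of $\mathcal M_f$, so $f$ is not complex conjugate phase retrieval.

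For the converse, assume $f$ is not complex conjugate phase retrieval; discarding the trivial case $f\equiv0$, pick $g\in\mathcal M_f$ with $g\notin\{zf:z\in\T\}\cup\{z\bar f:z\in\T\}$, and suppose for contradiction that $\Re(G)\equiv\Re(F)$. Since also $|F(x,y)|=|f(x)||f(y)|=|g(x)||g(y)|=|G(x,y)|$, comparing moduli and real parts forces $(\Im G(x,y))^2=(\Im F(x,y))^2$, giving the pointwise dichotomy $G(x,y)\in\{F(x,y),\overline{F(x,y)}\}$ for every pair $(x,y)$. The substance of the proof is to globalize this. Fix $x_0$ with $f(x_0)\ne0$ (so $g(x_0)\ne0$) and set $A=\{x:G(x,x_0)=F(x,x_0)\}$ and $B=\{x:G(x,x_0)=\overline{F(x,x_0)}\}$, so that $D=A\cup B$; solving $g(x)\overline{g(x_0)}=F(x,x_0)$, resp.\ $\overline{F(x,x_0)}$, produces unimodular constants $\lambda,\mu$ with $g=\lambda f$ on $A$ and $g=\mu\bar f$ on $B$. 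If $B\subseteq A$ then $g=\lambda f$ on $D$, and if $A\subseteq B$ then $g=\mu\bar f$ on $D$, each contradicting the nontriviality of $g$; hence we may choose $x_1\in A\setminus B$ and $x_2\in B\setminus A$, and then $f(x_1)\ne0\ne f(x_2)$ since $F(x_1,x_0)$ and $F(x_2,x_0)$ are non-real. Evaluating the dichotomy at $(x_1,x_2)$ gives $\lambda\overline{\mu}\,f(x_1)f(x_2)=G(x_1,x_2)\in\{f(x_1)\overline{f(x_2)},\overline{f(x_1)}f(x_2)\}$. In the first case, cancelling $f(x_1)$ and multiplying by $\mu$ gives $\lambda f(x_2)=\mu\overline{f(x_2)}$, hence $g(x_2)\overline{g(x_0)}=\lambda f(x_2)\overline{g(x_0)}=F(x_2,x_0)$, i.e.\ $x_2\in A$; in the second case, cancelling $f(x_2)$ gives $\lambda f(x_1)=\mu\overline{f(x_1)}$, hence $g(x_1)\overline{g(x_0)}=\mu\overline{f(x_1)}\,\overline{g(x_0)}=\overline{F(x_1,x_0)}$, i.e.\ $x_1\in B$. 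Either conclusion contradicts the choice of $x_1,x_2$. Therefore $\Re(G)\not\equiv\Re(F)$, and $u=(f+g)/2$, $v=(f-g)/2$ is the required decomposition.

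I expect the main obstacle to be precisely this globalization step: the constraints $|f|=|g|$ and $\Re(G)=\Re(F)$ only determine $g$ up to an independent sign choice at each pair of points, and one must exploit the rank-one outer-product structure of $F$ and $G$ — through the base point $x_0$ and then the cross-pair $(x_1,x_2)$ — to rule out a genuinely ``mixed'' solution that agrees with $f$ on part of $D$ and with $\bar f$ on the rest.
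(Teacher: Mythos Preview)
Your proof is correct and follows essentially the same route as the paper's: both reduce, via $u=(f+g)/2$ and $v=(f-g)/2$, to the equivalence between $g\in\{zf,z\bar f:z\in\T\}$ and $\Re\bigl(f(x)\overline{f(y)}\bigr)\equiv\Re\bigl(g(x)\overline{g(y)}\bigr)$, and both globalize the resulting pointwise dichotomy by fixing a base point $x_0$, splitting $D$ into two sets, and deriving a contradiction at a cross pair. The only differences are cosmetic: the paper factors the first reduction through the vector-valued Theorem~\ref{hilbertpr.thm}, and it normalizes so that $f(x_0)=g(x_0)\in\R$ (effectively your $\lambda=\mu=1$), which collapses the cross-pair contradiction to the single line $\Re\bigl(f(x_1)\overline{f(y_1)}-g(x_1)\overline{g(y_1)}\bigr)=2\,\Im f(x_1)\,\Im f(y_1)\ne0$.
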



 We remark that
 the set of all complex conjugate phase retrieval  functions in ${\mathcal C}$ is a line cone, but it may be  neither closed  nor convex, cf. \cite{YC16}.
Applying  Theorem \ref{complexpr.thm}, we
have   the following characterization to
  the complex linear space  $\mathcal C$.

\begin{thm}\label{complexpr.thm2}
    Let ${\mathcal C}$ be a complex linear space of functions on a domain $D$ that is invariant under complex conjugation.
Then ${\mathcal C}$ is complex conjugate phase retrieval 
if and only if there do not  exist $u, v\in {\mathcal C}$
satisfying \eqref{complex.decom.2} and
\eqref{complex.decom.3}.
\end{thm}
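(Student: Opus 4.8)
The plan is to derive Theorem~\ref{complexpr.thm2} directly from the pointwise characterization in Theorem~\ref{complexpr.thm}. The guiding observation is that once we quantify over \emph{all} functions $f\in{\mathcal C}$, the side constraint $f=u+v$ in \eqref{complex.decom.1} carries no information: given any $u,v\in{\mathcal C}$, the sum $u+v$ again lies in ${\mathcal C}$ and can itself play the role of $f$.

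For the ``only if'' part I would argue by contraposition. Assume there exist $u,v\in{\mathcal C}$ satisfying \eqref{complex.decom.2} and \eqref{complex.decom.3}, and put $f:=u+v$. Since ${\mathcal C}$ is a linear space, $f\in{\mathcal C}$, and the triple $(f,u,v)$ then satisfies \eqref{complex.decom.1}, \eqref{complex.decom.2} and \eqref{complex.decom.3} simultaneously. By Theorem~\ref{complexpr.thm}, this $f$ fails to be complex conjugate phase retrieval in ${\mathcal C}$, hence ${\mathcal C}$ is not complex conjugate phase retrieval. Equivalently, if ${\mathcal C}$ is complex conjugate phase retrieval then no pair $u,v\in{\mathcal C}$ can satisfy both \eqref{complex.decom.2} and \eqref{complex.decom.3}.

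For the ``if'' part, suppose no $u,v\in{\mathcal C}$ satisfy \eqref{complex.decom.2} and \eqref{complex.decom.3} at once. Then for every $f\in{\mathcal C}$ there is a fortiori no decomposition $f=u+v$ with $u,v\in{\mathcal C}$ obeying \eqref{complex.decom.2} and \eqref{complex.decom.3}, since such a decomposition would in particular exhibit a forbidden pair. Applying Theorem~\ref{complexpr.thm} to each $f\in{\mathcal C}$ shows that every $f\in{\mathcal C}$ is complex conjugate phase retrieval in ${\mathcal C}$, that is, ${\mathcal C}$ is complex conjugate phase retrieval.

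Both implications are immediate consequences of Theorem~\ref{complexpr.thm}, so there is no substantive obstacle in this step; the only care needed is the logical bookkeeping that removing constraint \eqref{complex.decom.1} is exactly offset by passing from a single $f$ to the universal statement over all $f\in{\mathcal C}$. The genuine work sits in Theorem~\ref{complexpr.thm} itself, where from a hypothetical $g\in{\mathcal M}_f$ outside $\{zf:z\in\T\}\cup\{z\bar f:z\in\T\}$ one forms $u=(f+g)/2$ and $v=(f-g)/2$ and verifies \eqref{complex.decom.2}--\eqref{complex.decom.3}, but that result is already established and may be invoked here.
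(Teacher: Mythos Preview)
Your proposal is correct and matches the paper's approach exactly: the paper simply says ``Applying Theorem~\ref{complexpr.thm}'' before stating Theorem~\ref{complexpr.thm2}, and what you have written is precisely the routine logical unpacking of that one-line justification. There is nothing to add.
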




Now we consider the application of Theorems \ref{complexpr.thm} and  \ref{complexpr.thm2} to
the  complex range space  $R_{\C}({\bf A})$
of  a real matrix  ${\bf A}$ of size $m\times n$.
Without loss of generality, we assume that
\begin{equation}\label{rankassumption}
{\rm rank}({\bf A})=n,\end{equation}
otherwise replacing ${\bf A}$ by its submatrix $\tilde {\bf A}$ of full rank such that $R_{\C}(\tilde {\bf A})=R_{\C}({\bf A})$.
Write ${\bf A}^T=({\bf a}_1, \ldots, {\bf a}_m)$.
As  the vector $f\in R_{\C}({\bf A})\subset \R^m$ can be considered as a function on $D=\{1, \ldots, m\}$,  we obtain 
$${\mathcal M}_f= \big\{ {\bf A} {\bf x}, \  |{\bf a}_i^T {\bf x}|= |{\bf a}_i^T {\bf x}_f|, 1\le i\le m\big\},$$
where  ${\bf x}_f$ is the unique vector in $\C^n$ by \eqref{rankassumption} such that
\begin{equation}\label{xf.def} f={\bf A}{\bf x}_f.\end{equation}
Therefore a function  $f\in R_{\C}( {\bf A})$ is complex conjugate phase   retrieval in $R_{\C}({\bf A})$ if and only if
\begin{equation}
{\mathcal M}_f= \{ z {\bf A} {\bf x}_f,  z\in \T\}\cup \{ z {\bf A}\bar {\bf x}_f,  z\in \T\}.
\end{equation}
Observe that the linear space of all real symmetric matrices is spanned by
${\bf a}_i {\bf a}_j^T+ {\bf a}_j {\bf a}_i^T, 1\le i, j\le m$, by \eqref{rankassumption}.
Hence by  Theorem \ref{complexpr.thm}, we have the following  corollary about complex conjugate phase  retrieval
in $R_{\C}({\bf A})$.

\begin{corollary}\label{corollary2.3new}  Let ${\bf A}$ be  a real matrix of size $m\times n$ satisfying \eqref{rankassumption}. Then $f\in
R_{\C}( {\bf A})$ is  complex conjugate phase retrieval if and only if there do not exist  ${\bf x}_1, {\bf x}_2\in \C^n$ such that
${\bf x}_f={\bf x}_1+{\bf x}_2$, ${\bf X}^T\ne -{\bf X}$, and
$\Tr( {\bf a}_i {\bf a}_i^T {\bf X} )= 0$ for all $1\le i\le m$,
where ${\bf x}_f$ is the unique vector in $\C^n$ satisfying \eqref{xf.def} and
\begin{equation}\label{X.def}
{\bf X}=
\frac{1}{2} ({\bf \bar x}_2 {\bf x}_1^T+ {\bf x}_2 {\bf {\bar x}}_1^T)= \Re({\bf x}_2) \Re ({\bf x}_1^T)+ \Im({\bf x}_2) \Im ({\bf x}_1^T) .\end{equation}
\end{corollary}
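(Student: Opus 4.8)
The plan is to read the corollary off from Theorem~\ref{complexpr.thm}, applied to the complex conjugate invariant space $\mathcal C=R_\C(\mathbf A)$ with $D=\{1,\dots,m\}$, by translating conditions \eqref{complex.decom.1}--\eqref{complex.decom.3} into statements about coefficient vectors in $\C^n$. Every $u,v\in R_\C(\mathbf A)$ can be written $u=\mathbf A\mathbf x_1$, $v=\mathbf A\mathbf x_2$ with $\mathbf x_1,\mathbf x_2\in\C^n$, and by the full-rank assumption \eqref{rankassumption} the identity $f=u+v$ is equivalent to $\mathbf x_f=\mathbf x_1+\mathbf x_2$, with $\mathbf x_f$ as in \eqref{xf.def}. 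Viewing $f$ as a function on $\{1,\dots,m\}$, we have $u(i)=\mathbf a_i^T\mathbf x_1$ and $v(i)=\mathbf a_i^T\mathbf x_2$; since the $\mathbf a_i$ are real, $\overline{v(i)}=\mathbf a_i^T\overline{\mathbf x_2}$, and splitting $\mathbf x_1,\mathbf x_2$ into real and imaginary parts one computes
\begin{equation*}
\Re\big(u(i)\,\overline{v(j)}\big)=\mathbf a_i^T\mathbf X^T\mathbf a_j,\qquad 1\le i,j\le m,
\end{equation*}
where $\mathbf X$ is the matrix in \eqref{X.def}.

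From this single identity the translation is routine bookkeeping. Setting $i=j$ and using $\mathbf a_i^T\mathbf X^T\mathbf a_i=\mathbf a_i^T\mathbf X\mathbf a_i=\Tr(\mathbf a_i\mathbf a_i^T\mathbf X)$, condition \eqref{complex.decom.2} becomes $\Tr(\mathbf a_i\mathbf a_i^T\mathbf X)=0$ for all $1\le i\le m$. Symmetrizing in $i,j$ (and using $\mathbf a_j^T\mathbf X^T\mathbf a_i=\mathbf a_i^T\mathbf X\mathbf a_j$ as $1\times1$ matrices) gives $\Re\big(u(i)\overline{v(j)}+u(j)\overline{v(i)}\big)=\mathbf a_i^T(\mathbf X+\mathbf X^T)\mathbf a_j$, so condition \eqref{complex.decom.3} says precisely that $\mathbf a_i^T(\mathbf X+\mathbf X^T)\mathbf a_j\ne 0$ for some $1\le i,j\le m$.

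It then remains to see that this last statement is equivalent to $\mathbf X^T\ne-\mathbf X$, which is the only step with real content. The matrix $\mathbf X+\mathbf X^T$ is symmetric and $\mathbf a_i^T(\mathbf X+\mathbf X^T)\mathbf a_j=\frac{1}{2}\Tr\big((\mathbf X+\mathbf X^T)(\mathbf a_i\mathbf a_j^T+\mathbf a_j\mathbf a_i^T)\big)$; since by \eqref{rankassumption} the matrices $\mathbf a_i\mathbf a_j^T+\mathbf a_j\mathbf a_i^T$, $1\le i,j\le m$, span all real symmetric matrices (as recalled just before the corollary), the left side vanishes for every $i,j$ if and only if $\Tr\big((\mathbf X+\mathbf X^T)\mathbf S\big)=0$ for every symmetric $\mathbf S$, hence—by nondegeneracy of the trace form on symmetric matrices together with the symmetry of $\mathbf X+\mathbf X^T$—if and only if $\mathbf X+\mathbf X^T=0$. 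Taking contrapositives and assembling the three translated conditions, Theorem~\ref{complexpr.thm} says that $f$ fails to be complex conjugate phase retrieval exactly when there exist $\mathbf x_1,\mathbf x_2\in\C^n$ with $\mathbf x_f=\mathbf x_1+\mathbf x_2$, $\Tr(\mathbf a_i\mathbf a_i^T\mathbf X)=0$ for all $1\le i\le m$, and $\mathbf X^T\ne-\mathbf X$, which is the stated characterization. The main obstacle is not conceptual: almost all of the work is keeping the conjugates and transposes straight in the displayed identity, and \eqref{rankassumption} is used exactly twice—to pass between $f=u+v$ and $\mathbf x_f=\mathbf x_1+\mathbf x_2$, and through the spanning property to reduce \eqref{complex.decom.3} to $\mathbf X^T\ne-\mathbf X$.
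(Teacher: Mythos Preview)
Your proof is correct and follows exactly the route the paper intends: it applies Theorem~\ref{complexpr.thm} to $\mathcal C=R_{\C}(\mathbf A)$, rewrites $\Re(u(i)\overline{v(j)})=\mathbf a_i^T\mathbf X^T\mathbf a_j$, and then uses the spanning property of $\mathbf a_i\mathbf a_j^T+\mathbf a_j\mathbf a_i^T$ (noted just before the corollary) to reduce \eqref{complex.decom.3} to $\mathbf X^T\ne-\mathbf X$. The paper itself only says ``Hence by Theorem~\ref{complexpr.thm}'' together with that spanning observation, so you have simply filled in the omitted bookkeeping faithfully.
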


Let ${\bf M}_{n,2}(\R)$ be the set of all  real matrices of size $n\times n$ with rank at most 2.
 Observe that  a real matrix ${\bf X}$ is of the form \eqref{X.def} if and only if  ${\bf X}\in {\bf M}_{n,2}(\R)$.
 Therefore
we have the following characterization to complex conjugate phase retrieval of  the complex range space
$R_{\C}({\bf A})$, see \cite[Theorem 2.3]{Lai} for an equivalent formulation  and \cite[Theorem 2.1]{WX17} for phase retrieval of the real linear subspace of $R_{\C}({\bf A})$.

\begin{corollary}\label{corollary2.4}  Let ${\bf A}$ be  a real matrix of size $m\times n$ satisfying \eqref{rankassumption}. Then
the complex range space
$R_{\C}({\bf A})$ is complex conjugate phase retrieval if and only if  there does not exist
${\bf X}\in {\bf M}_{n,2}(\R)$  such that
${\bf X}^T\ne  -{\bf  X}$ and
$\Tr( {\bf a}_i {\bf a}_i^T  {\bf X} ) =0$ for  all $1\le i\le m$.
\end{corollary}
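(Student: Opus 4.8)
The plan is to deduce Corollary~\ref{corollary2.4} from Corollary~\ref{corollary2.3new} by quantifying over all functions in $R_{\C}({\bf A})$ and then invoking the elementary fact that the real matrices of the form \eqref{X.def} are exactly the members of ${\bf M}_{n,2}(\R)$.

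First I would unwind the definitions. By definition, $R_{\C}({\bf A})$ is complex conjugate phase retrieval precisely when every $f\in R_{\C}({\bf A})$ is complex conjugate phase retrieval, and under \eqref{rankassumption} the assignment $f\mapsto {\bf x}_f$ from $R_{\C}({\bf A})$ onto $\C^n$ determined by \eqref{xf.def} is a bijection. Applying Corollary~\ref{corollary2.3new} to each $f$ and taking the contrapositive, $R_{\C}({\bf A})$ fails to be complex conjugate phase retrieval if and only if there exist ${\bf x}_f\in\C^n$ and ${\bf x}_1,{\bf x}_2\in\C^n$ with ${\bf x}_f={\bf x}_1+{\bf x}_2$, ${\bf X}^T\ne-{\bf X}$, and $\Tr({\bf a}_i{\bf a}_i^T{\bf X})=0$ for all $1\le i\le m$, where ${\bf X}$ is given by \eqref{X.def}. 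Since ${\bf x}_f$ is determined by the pair $({\bf x}_1,{\bf x}_2)$ and imposes no extra constraint on it, this condition is equivalent to: there exist ${\bf x}_1,{\bf x}_2\in\C^n$ with ${\bf X}^T\ne-{\bf X}$ and $\Tr({\bf a}_i{\bf a}_i^T{\bf X})=0$ for all $1\le i\le m$. In other words, once we let $f$ range over the whole space, the particular function $f$ in the statement of Corollary~\ref{corollary2.3new} drops out.

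It then remains to replace the quantifier over pairs $({\bf x}_1,{\bf x}_2)$ by a quantifier over ${\bf X}\in{\bf M}_{n,2}(\R)$, i.e.\ to verify that a real matrix ${\bf X}$ admits a representation of the form \eqref{X.def} if and only if ${\rm rank}({\bf X})\le2$. The forward implication is immediate, since the right-hand side of \eqref{X.def} is a sum of the two rank-one real matrices $\Re({\bf x}_2)\Re({\bf x}_1^T)$ and $\Im({\bf x}_2)\Im({\bf x}_1^T)$. For the converse, given ${\bf X}$ with ${\rm rank}({\bf X})\le2$ I would write ${\bf X}={\bf p}_1{\bf q}_1^T+{\bf p}_2{\bf q}_2^T$ with ${\bf p}_1,{\bf p}_2,{\bf q}_1,{\bf q}_2\in\R^n$ (take a rank-one decomposition from the singular value decomposition, padding with a zero term if the rank is at most one), set ${\bf x}_2$ to be the complex vector with real part ${\bf p}_1$ and imaginary part ${\bf p}_2$ and ${\bf x}_1$ the complex vector with real part ${\bf q}_1$ and imaginary part ${\bf q}_2$, and observe that $\Re({\bf x}_2)\Re({\bf x}_1^T)+\Im({\bf x}_2)\Im({\bf x}_1^T)={\bf p}_1{\bf q}_1^T+{\bf p}_2{\bf q}_2^T={\bf X}$, which equals the expression in \eqref{X.def} by the identity recorded there. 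Combining this with the previous paragraph, $R_{\C}({\bf A})$ fails to be complex conjugate phase retrieval if and only if there exists ${\bf X}\in{\bf M}_{n,2}(\R)$ with ${\bf X}^T\ne-{\bf X}$ and $\Tr({\bf a}_i{\bf a}_i^T{\bf X})=0$ for all $1\le i\le m$; taking the contrapositive yields the stated characterization.

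The argument is essentially bookkeeping on top of Corollary~\ref{corollary2.3new}, so there is no real obstacle; the only step requiring genuine (but short) verification is the rank-at-most-$2$ description of the matrices in \eqref{X.def}, and in particular the need to split an arbitrary rank-one decomposition ${\bf X}={\bf p}_1{\bf q}_1^T+{\bf p}_2{\bf q}_2^T$ into real and imaginary parts in exactly the pattern matching \eqref{X.def}.
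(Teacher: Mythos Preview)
Your proposal is correct and follows essentially the same approach as the paper: the paper derives Corollary~\ref{corollary2.4} from Corollary~\ref{corollary2.3new} together with the single observation that a real matrix ${\bf X}$ is of the form \eqref{X.def} if and only if ${\bf X}\in{\bf M}_{n,2}(\R)$. You have simply made explicit the quantification over $f$ (so that ${\bf x}_f$ drops out) and supplied the short verification of the rank-at-most-$2$ description that the paper leaves as an observation.
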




\subsection{Complex conjugate phase retrieval and  phase retrieval}\label{ccprrpr.section}
 Let
$\Re({\mathcal C})$ be the linear space of all real-valued functions in ${\mathcal C}$. By \cite[Theorem 2.1]{YC16},  a real function $f\in \Re({\mathcal C})$ is phase retrieval in $ \Re({\mathcal C})$ if and only if there do not exist
nonzero functions $f_1, f_2\in \Re({\mathcal C})$ such that
\begin{equation}\label{realnonseparable}
f=f_1+f_2 \ \ {\rm and}\ \ f_1 f_2=0.
\end{equation}
The above characterization for phase retrieval in $\Re({\mathcal C})$ is closely related to the characterization
\eqref{complex.decom.1}, \eqref{complex.decom.2} and \eqref{complex.decom.3}  with the requirement \eqref{complex.decom.3} replaced by the nonzero requirement for
the functions  $f_1, f_2\in \Re({\mathcal C})$.
In the following
theorem, we introduce  a necessary condition for  a function    to be  complex conjugate phase  retrieval in ${\mathcal C}$,
see Section \ref{necess.charac.section} for the proof.

 \begin{thm}\label{necess.charac}  Let
 ${\mathcal C}$  be  a
 complex linear space of functions on a domain $D$
that is  invariant under complex conjugation, and
$\Re({\mathcal C})$ be its  real linear subspace
in \eqref{realspace.def}. If
a function  $f\in {\mathcal C}$
is   complex conjugate phase retrieval in ${\mathcal C}$, 
 then  any function  in the real linear space spanned by the real and imaginary parts of $f$ is  phase retrieval in $\Re({\mathcal C})$.
 \end{thm}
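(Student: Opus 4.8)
The plan is to deduce the theorem from two facts recorded in the excerpt: the characterization of Theorem \ref{complexpr.thm}, and the characterization \cite[Theorem 2.1]{YC16} according to which a real function $g\in\Re(\mathcal C)$ fails to be phase retrieval in $\Re(\mathcal C)$ exactly when it splits as $g=g_1+g_2$ with nonzero $g_1,g_2\in\Re(\mathcal C)$ and $g_1g_2=0$ on $D$. The heart of the argument is the lemma: \emph{if $F\in\mathcal C$ is complex conjugate phase retrieval in $\mathcal C$, then $\Re(F)$ is phase retrieval in $\Re(\mathcal C)$.}

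First I would reduce the theorem to this lemma. Write $f=f_1+if_2$ with $f_1=\Re(f)$, $f_2=\Im(f)$; complex conjugate invariance of $\mathcal C$ puts $f_1,f_2\in\Re(\mathcal C)$. A general member of the real span of $f_1,f_2$ is $g=\alpha f_1+\beta f_2$ with $\alpha,\beta\in\R$. If $\alpha=\beta=0$ then $g=0$ is phase retrieval in $\Re(\mathcal C)$ vacuously, since no splitting into \emph{nonzero} summands with vanishing product exists. If $(\alpha,\beta)\neq(0,0)$, I would set $c=\alpha-i\beta\neq0$, so that $\Re(cf)=\alpha f_1+\beta f_2=g$; since $|cf|=|c|\,|f|$ on $D$, a direct computation from $\mathcal M_f=\{zf:z\in\T\}\cup\{z\bar f:z\in\T\}$ gives $\mathcal M_{cf}=\{z(cf):z\in\T\}\cup\{z\overline{cf}:z\in\T\}$, i.e. $cf$ is again complex conjugate phase retrieval in $\mathcal C$ (the ``line cone'' property mentioned after Theorem \ref{complexpr.thm}). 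Applying the lemma to $F=cf$ then gives that $g=\Re(cf)$ is phase retrieval in $\Re(\mathcal C)$.

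To prove the lemma I would argue by contraposition via Theorem \ref{complexpr.thm}. Suppose $\Re(F)$ is not phase retrieval in $\Re(\mathcal C)$, and choose nonzero $g_1,g_2\in\Re(\mathcal C)$ with $\Re(F)=g_1+g_2$ and $g_1g_2=0$ on $D$. I would then take
\[
u=g_1\in\Re(\mathcal C)\subset\mathcal C,\qquad v=F-g_1=g_2+i\,\Im(F)\in\mathcal C ,
\]
so that $F=u+v$. Since $g_1,g_2,\Im(F)$ are real-valued and $g_1g_2=0$, we have $\Re\big(u(x)\overline{v(x)}\big)=\Re\big(g_1(x)(g_2(x)-i\,\Im(F)(x))\big)=g_1(x)g_2(x)=0$ for all $x\in D$, which is the analogue of \eqref{complex.decom.2}. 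Also $\Re\big(u(x)\overline{v(y)}+u(y)\overline{v(x)}\big)=g_1(x)g_2(y)+g_1(y)g_2(x)$; picking $x_0\in D$ with $g_1(x_0)\neq0$ (so $g_2(x_0)=0$) and $y_0\in D$ with $g_2(y_0)\neq0$ (so $g_1(y_0)=0$) makes this equal $g_1(x_0)g_2(y_0)\neq0$, which is the analogue of \eqref{complex.decom.3}. By Theorem \ref{complexpr.thm}, $F$ is not complex conjugate phase retrieval in $\mathcal C$, proving the lemma in contrapositive form.

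I expect the only genuinely delicate point to be the verification of condition \eqref{complex.decom.3}: it is exactly there that the disjointness of the (essential) supports of $g_1$ and $g_2$, forced by $g_1g_2\equiv0$, is needed to prevent the symmetrized cross term from vanishing identically. The remaining ingredients — the reduction from $f$ to $cf$, the reality of the three scalar factors, and closure of $\mathcal C$ under the linear operations defining $u$ and $v$ — are routine bookkeeping, and I foresee no further obstacle as long as \cite[Theorem 2.1]{YC16} is used in the ``nonzero splitting'' form quoted above, so that the degenerate case $g=0$ is properly covered.
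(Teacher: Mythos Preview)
Your proof is correct and follows essentially the same route as the paper's: both reduce to showing $\Re(cf)$ is phase retrieval for a suitable scalar $c$, assume a nonzero splitting $\Re(cf)=g_1+g_2$ with $g_1g_2=0$, and then build a pair $(u,v)$ violating Theorem~\ref{complexpr.thm}. The only cosmetic difference is that the paper takes $u=\tfrac{f-\bar f+f_1}{2}$ and $v=\tfrac{f_2}{2}$, which is your pair with the roles of $u$ and $v$ interchanged (harmless by the symmetry of conditions \eqref{complex.decom.2}--\eqref{complex.decom.3}).
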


 As an application of the above theorem, we have the following result.

  \begin{cor}\label{necess.charac.cor}  Let
 ${\mathcal C}$   and
$\Re({\mathcal C})$ be  as in Theorem \ref{necess.charac}. If
  ${\mathcal C}$
is complex conjugate phase retrieval,
then $\Re({\mathcal C})$ is  phase retrieval.
 \end{cor}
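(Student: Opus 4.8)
The plan is to deduce this corollary directly from Theorem \ref{necess.charac} by specializing that theorem to real-valued functions. The point is that all of the real work has already been done in Theorem \ref{necess.charac}; the corollary is essentially a matter of unwinding definitions.

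Concretely, first I would fix an arbitrary function $g\in\Re({\mathcal C})$ and aim to show that $g$ is phase retrieval in $\Re({\mathcal C})$. Since $\Re({\mathcal C})$ is by definition the set of real-valued functions lying in ${\mathcal C}$, we have $g\in{\mathcal C}$. By hypothesis ${\mathcal C}$ is complex conjugate phase retrieval, which by definition means that \emph{every} function in ${\mathcal C}$ is complex conjugate phase retrieval in ${\mathcal C}$; in particular $g$ is complex conjugate phase retrieval in ${\mathcal C}$.

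Next I would apply Theorem \ref{necess.charac} with $f=g$. Because $g$ is real-valued, its real part is $g$ itself and its imaginary part is the zero function, so the real linear space spanned by the real and imaginary parts of $g$ is just $\{tg:\ t\in\R\}$, which contains $g$. Theorem \ref{necess.charac} then asserts that every function in this span is phase retrieval in $\Re({\mathcal C})$, hence in particular $g$ is phase retrieval in $\Re({\mathcal C})$. Since $g\in\Re({\mathcal C})$ was arbitrary, this shows that $\Re({\mathcal C})$ is phase retrieval, completing the proof.

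I do not expect any genuine obstacle here: the argument is a one-step specialization of Theorem \ref{necess.charac}, and the only thing to be careful about is the bookkeeping that a real-valued $g$ indeed lies in ${\mathcal C}$ (so that the hypothesis on ${\mathcal C}$ applies to it) and that its imaginary part vanishes (so that the spanned space reduces to $\R g$). Any difficulty in the overall development is concentrated in the proof of Theorem \ref{necess.charac} itself, not in this corollary.
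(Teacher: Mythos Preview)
Your proposal is correct and matches the paper's approach: the paper presents this corollary as an immediate application of Theorem \ref{necess.charac} without further proof, and your argument supplies exactly the routine specialization (taking $f=g$ real-valued so that the span reduces to $\R g$) needed to spell that out.
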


%

 Let ${\bf A}$ be a real matrix of size $m\times n$ and $\Omega$ be a linear subspace of $\R^n$. We say that the subspace $\Omega$ is {\em phase retrieval} if
 $|{\bf A}{\bf x}|=|{\bf A}{\bf y}|$ holds for ${\bf x}, {\bf y}\in \Omega$ only when  ${\bf y}=\pm {\bf x}$  \cite{lh18}.
 Applying  Theorem \ref{necess.charac} to the complex range space $R_{\C}({\bf A})$  in \eqref{complexrange.def},
 we have the following conclusion about the  phase retrieval of a real linear subspace.

 \begin{cor}\label{matrixprojection.cor} Let ${\bf A}$ be an  $m\times n$ real matrix  satisfying \eqref{rankassumption}.  If $f={\bf A} {\bf x}_f \in
 R_{\C}({\bf A})$ is  complex conjugate phase retrieval, 
then the space  spanned by the real and imaginary parts  of the vector ${\bf x}_f$
 is phase retrieval. \end{cor}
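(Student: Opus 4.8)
The plan is to deduce this corollary directly from Theorem \ref{necess.charac} by translating its conclusion, stated for the abstract complex conjugate invariant space $R_{\C}({\bf A})$, into the language of the coordinate vector ${\bf x}_f \in \C^n$. Recall that by the rank assumption \eqref{rankassumption} the map ${\bf x} \mapsto {\bf A}{\bf x}$ is an injective linear bijection from $\C^n$ onto $R_{\C}({\bf A})$, and it carries complex conjugation on $\C^n$ to complex conjugation on $R_{\C}({\bf A})$ since ${\bf A}$ is real. Consequently this map restricts to a linear isomorphism from $\R^n$ onto $\Re(R_{\C}({\bf A}))$, the real-valued functions in $R_{\C}({\bf A})$, which here are exactly the real vectors in the range space $R_{\R}({\bf A}) = \{{\bf A}{\bf x} : {\bf x}\in\R^n\}$.

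First I would write ${\bf x}_f = {\bf p} + i{\bf q}$ with ${\bf p}, {\bf q}\in\R^n$, so that $\Re(f) = {\bf A}{\bf p}$ and $\Im(f) = {\bf A}{\bf q}$. The real linear space spanned by the real and imaginary parts of the function $f\in R_{\C}({\bf A})$ is then precisely ${\bf A}(\Omega)$, where $\Omega := \span\{{\bf p},{\bf q}\}\subset\R^n$ is the space spanned by the real and imaginary parts of ${\bf x}_f$. Next, invoking Theorem \ref{necess.charac} with ${\mathcal C} = R_{\C}({\bf A})$: since $f$ is complex conjugate phase retrieval in $R_{\C}({\bf A})$, every function in the real span of $\Re(f)$ and $\Im(f)$ — that is, every element of ${\bf A}(\Omega)$ — is phase retrieval in $\Re(R_{\C}({\bf A}))$.

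It then remains to check that "${\bf A}(\Omega)$ consists of phase retrieval functions in $\Re(R_{\C}({\bf A}))$" is the same statement as "$\Omega$ is phase retrieval" in the sense of \cite{lh18}, i.e. that $|{\bf A}{\bf x}| = |{\bf A}{\bf y}|$ for ${\bf x},{\bf y}\in\Omega$ forces ${\bf y} = \pm{\bf x}$. This is immediate from the definitions: regarding a vector in $\R^m$ as a function on $D = \{1,\dots,m\}$, the pointwise magnitude $|({\bf A}{\bf x})(k)| = |{\bf a}_k^T{\bf x}|$ is exactly the $k$-th entry of $|{\bf A}{\bf x}|$, so ${\mathcal M}_{{\bf A}{\bf x}}\cap{\bf A}(\Omega) = \{\pm{\bf A}{\bf x}\}$ for all ${\bf x}\in\Omega$ is literally the condition that $\Omega$ is phase retrieval (using injectivity of ${\bf A}$ on $\R^n$ once more to pass between ${\bf A}{\bf y} = \pm{\bf A}{\bf x}$ and ${\bf y} = \pm{\bf x}$). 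There is essentially no obstacle here; the only point requiring a word of care is the bookkeeping that the real linear span of $\Re(f),\Im(f)$ equals ${\bf A}(\span\{\Re({\bf x}_f),\Im({\bf x}_f)\})$ rather than something larger, which follows because ${\bf A}$ is a real linear map and hence commutes with taking real/imaginary parts and with real linear combinations.
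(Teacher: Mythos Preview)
Your proposal is correct and follows exactly the approach the paper intends: the corollary is stated immediately after the sentence ``Applying Theorem \ref{necess.charac} to the complex range space $R_{\C}({\bf A})$ in \eqref{complexrange.def}, we have the following conclusion\ldots,'' and your argument is precisely that application, with the bookkeeping between ${\bf x}_f$ and $f={\bf A}{\bf x}_f$ spelled out via the injectivity from \eqref{rankassumption}. One tiny wording point: the two conditions you compare in the last paragraph are not literally ``the same statement'' --- phase retrieval of ${\bf A}{\bf x}$ in $\Re(R_{\C}({\bf A}))$ tests against all of $\R^n$, whereas phase retrievability of $\Omega$ only tests against $\Omega$ --- but the former trivially implies the latter, which is the direction you need.
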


 For  a real matrix  ${\bf A}$ of size $m\times 2$, we obtain from Corollary \ref{matrixprojection.cor} that
any vector  ${\bf x}$ in $\R^2$ is phase retrieval  from its phaseless measurement  $|{\bf A}{\bf x}|$ if
$R_{\C}({\bf A})$ is  complex conjugate phase retrieval. In the following proposition, we show that the converse is also true, see Section
\ref{mtwopr.prop.section} for the proof.

\begin{prop}\label{mtwopr.prop}
Let ${\bf A}$ be  a real matrix of size $m\times 2$ satisfying \eqref{rankassumption}.
Then
any vector  ${\bf x}$ in $\R^2$ is phase retrieval from its phaseless measurements  $|{\bf A}{\bf x}|$ if and only if
$R_{\C}({\bf A})$ is conjugate phase retrieval.
\end{prop}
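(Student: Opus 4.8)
The plan is to prove the two implications separately: sufficiency will follow from Corollary \ref{matrixprojection.cor}, and necessity from Corollary \ref{corollary2.4} together with the classical construction showing that a failure of the complement property destroys phase retrieval. Throughout, write ${\bf A}^T=({\bf a}_1,\ldots,{\bf a}_m)$ with ${\bf a}_i\in\R^2$; by \eqref{rankassumption} the vectors ${\bf a}_1,\ldots,{\bf a}_m$ span $\R^2$, and ${\bf A}{\bf x}=0$ forces ${\bf x}=0$.

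\emph{Sufficiency.} Assuming $R_\C({\bf A})$ is complex conjugate phase retrieval, let ${\bf x},{\bf y}\in\R^2$ satisfy $|{\bf A}{\bf x}|=|{\bf A}{\bf y}|$. I would form the vector ${\bf z}\in\C^2$ with $\Re({\bf z})={\bf x}$ and $\Im({\bf z})={\bf y}$, set $f={\bf A}{\bf z}$ so that ${\bf x}_f={\bf z}$ by \eqref{xf.def}, and note that $f$ is complex conjugate phase retrieval. Corollary \ref{matrixprojection.cor} then says the real span of $\Re({\bf z})={\bf x}$ and $\Im({\bf z})={\bf y}$ is a phase retrieval subspace; since ${\bf x}$ and ${\bf y}$ lie in it with equal phaseless measurements, ${\bf y}=\pm{\bf x}$, so every vector of $\R^2$ is phase retrieval from $|{\bf A}\,\cdot\,|$.

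\emph{Necessity (contrapositive).} Assume $R_\C({\bf A})$ is \emph{not} complex conjugate phase retrieval; I will exhibit ${\bf x},{\bf y}\in\R^2$ with $|{\bf A}{\bf x}|=|{\bf A}{\bf y}|$ but ${\bf y}\ne\pm{\bf x}$. Since $n=2$, the set ${\bf M}_{2,2}(\R)$ is all $2\times2$ real matrices, so Corollary \ref{corollary2.4} supplies a real $2\times2$ matrix ${\bf X}$ with ${\bf X}^T\ne-{\bf X}$ and $\Tr({\bf a}_i{\bf a}_i^T{\bf X})=0$ for all $i$. Because $\Tr({\bf a}_i{\bf a}_i^T{\bf X})={\bf a}_i^T{\bf X}{\bf a}_i$ depends only on the symmetric part of ${\bf X}$, the matrix ${\bf S}:=({\bf X}+{\bf X}^T)/2$ is a \emph{nonzero} symmetric matrix with ${\bf a}_i^T{\bf S}{\bf a}_i=0$ for every $i$. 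The null set $Z=\{{\bf u}\in\R^2:{\bf u}^T{\bf S}{\bf u}=0\}$ of a nonzero real binary quadratic form is either $\{0\}$, a single line through the origin, or a union of two distinct lines through the origin; the first two alternatives would force all ${\bf a}_i$ into a subspace of dimension at most $1$, contradicting \eqref{rankassumption}. Hence $Z=\ell_1\cup\ell_2$ with $\ell_1,\ell_2$ two distinct lines, and every ${\bf a}_i$ lies in $\ell_1\cup\ell_2$.

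Finally I run the standard argument. Put $S=\{i:{\bf a}_i\in\ell_1\}$, so ${\bf a}_i\in\ell_1$ for $i\in S$ and ${\bf a}_i\in\ell_2$ for $i\notin S$, and choose nonzero ${\bf x}\in\ell_1^\perp$ and nonzero ${\bf y}\in\ell_2^\perp$; since $\ell_1\ne\ell_2$ their orthogonal complements differ, so ${\bf x},{\bf y}$ are linearly independent. For $i\in S$, ${\bf a}_i^T{\bf x}=0$ gives ${\bf a}_i^T({\bf x}+{\bf y})={\bf a}_i^T{\bf y}=-{\bf a}_i^T({\bf x}-{\bf y})$; for $i\notin S$, ${\bf a}_i^T{\bf y}=0$ gives ${\bf a}_i^T({\bf x}+{\bf y})={\bf a}_i^T{\bf x}={\bf a}_i^T({\bf x}-{\bf y})$. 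In either case $|{\bf a}_i^T({\bf x}+{\bf y})|=|{\bf a}_i^T({\bf x}-{\bf y})|$, that is $|{\bf A}({\bf x}+{\bf y})|=|{\bf A}({\bf x}-{\bf y})|$, while ${\bf x}+{\bf y}\ne\pm({\bf x}-{\bf y})$ because ${\bf x}$ and ${\bf y}$ are nonzero. This shows $\R^2$ is not phase retrieval, completing the contrapositive. The one step meriting care is the trichotomy for $Z$ and the appeal to \eqref{rankassumption} to exclude its degenerate forms; the remainder is routine bookkeeping.
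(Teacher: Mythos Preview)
Your proof is correct, and the sufficiency direction matches the paper (both invoke Corollary~\ref{matrixprojection.cor}). For the necessity you take a genuinely different route. The paper argues directly: assuming $\R^2$ is phase retrieval, the complement property lets one pick ${\bf a}_1,{\bf a}_2$ spanning $\R^2$ and a third ${\bf a}_3=\lambda{\bf a}_1+\mu{\bf a}_2$ with $\lambda\mu\ne 0$, then checks by an explicit identity that ${\bf a}_1{\bf a}_1^T,{\bf a}_2{\bf a}_2^T,{\bf a}_3{\bf a}_3^T$ span all $2\times 2$ symmetric matrices, so Corollary~\ref{corollary2.4} yields conjugate phase retrieval. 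You instead run the contrapositive: failure of conjugate phase retrieval gives, via Corollary~\ref{corollary2.4}, a nonzero symmetric $2\times 2$ matrix ${\bf S}$ with ${\bf a}_i^T{\bf S}{\bf a}_i=0$ for all $i$; the rank assumption forces the null cone of the quadratic form to be two distinct lines, and the classical complement-property counterexample then defeats real phase retrieval. The two arguments are essentially dual---the paper shows $\{{\bf a}_i{\bf a}_i^T\}$ span the symmetric matrices, you show their orthogonal complement in the symmetric matrices is trivial---but your geometric analysis of the null cone makes the link to the complement property more transparent, while the paper's explicit spanning identity is shorter once the three vectors are in hand.
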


The complex shift-invariant space $S_\C(\psi)$  
generated by a compactly supported real function $\psi$ is not complex conjugate phase retrieval,
 as functions $f(t)= z_1 \psi(t)- z_2\psi(t-N)\in S_{\C}(\psi)$ are not  complex conjugate phase retrieval,
 where  $z_1, z_2$ are nonzero complex numbers and $N$ is an integer such that $\psi(t), \psi(t-N)$ have disjoint  supports.
By Theorem \ref{necess.charac}, a necessary condition 
 to the complex conjugate phase retrieval of
 a function $f(t)=\sum_{k\in \Z} c(k) \psi(t-k)\in S_{\C}(\psi)$
 is
 that for all $a, b\in \R$,
 $\sum_{k\in \Z} (a \Re c(k) +b \Im c(k)) \psi(t-k)$ are  phase retrieval
 in the real shift-invariant space
 \begin{equation*}
S(\psi)=\Big\{\sum_{k\in \Z} d(k) \psi (\cdot-k), \ d(k)\in \R \ {\rm for \ all} \  k\in \Z\Big\},
\end{equation*}
see \cite{YC16}.
In the following  result, we show that the converse is true when the generator $\psi$ is the hat function
$h(t)=\max(1-|t|, 0)$, see Section \ref{complex.sis.pr.pfsection} for the proof.


%

\begin{prop}\label{complex.sis.pr}
 A nonzero function
   $f=\sum_{k\in \Z} c(k) h (\cdot-k)\in S_{\C}(h)$
 is complex conjugate phase retrieval if and only if
\begin{equation}  \label{complex.sis.pr.eq2}
c(k)\ne 0  \ {\rm for \ all} \ K_--1< k<K_++1\end{equation}
and  there exists at most one $k\in (K_--1, K_+)$ such that
\begin{equation}\label{complex.sis.pr.eq3}
 \Im \big(c(k)\overline{c(k+1)}\big)\ne 0,
 \end{equation}
where $K_-=\inf\{k, c(k)\neq 0\}$ and $K_+=\sup\{k,  c(k)\neq 0\}$.
\end{prop}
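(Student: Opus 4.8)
The plan is to prove the two implications of the proposition by different routes: necessity will come from Theorem~\ref{necess.charac} combined with the known description of phase retrieval in the real shift--invariant space $S(h)$, while sufficiency will be checked by a direct analysis of the magnitude $|f|$ that exploits the piecewise--linear, integer--knot structure of the hat function. Throughout I identify $\C$ with $\R^2$ and use the elementary observation that, for $w\in\C\setminus\{0\}$, one has $\Re(\bar w\,c(k))=0$ exactly when $c(k)$ lies on the line $w^{\perp}$ through the origin, and that $a\Re c(k)+b\Im c(k)=\Re\big(\overline{(a+b\mathbf i)}\,c(k)\big)$.

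\emph{Necessity.} By \cite[Theorem 2.1]{YC16} applied to $\mathcal C=S_\C(h)$, a real function $\sum_k d(k)h(\cdot-k)\in S(h)$ is phase retrieval in $S(h)$ if and only if $\{k:\ d(k)\ne 0\}$ is a (possibly empty) set of consecutive integers, since for the hat function a decomposition into two nonzero elements of $S(h)$ with product $0$ forces, and is forced by, the vanishing of some coefficient $d(k_0)$ with $K_-<k_0<K_+$. Hence, by Theorem~\ref{necess.charac}, if $f$ is complex conjugate phase retrieval then for every $w\ne 0$ the set $\{k:\ \Re(\bar w\,c(k))\ne 0\}$ consists of consecutive integers. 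It remains to derive \eqref{complex.sis.pr.eq2}--\eqref{complex.sis.pr.eq3} from this. If $c(k_0)=0$ for some $K_-<k_0<K_+$, choose $w$ lying on neither $c(K_-)^{\perp}$ nor $c(K_+)^{\perp}$; then the set above contains $K_-$ and $K_+$ but not $k_0$, a contradiction, so \eqref{complex.sis.pr.eq2} holds. If then \eqref{complex.sis.pr.eq3} fails at two indices $k_1<k_2$ in $(K_--1,K_+)$, take $w\perp c(k_1+1)$; the set contains $k_1$ (because $c(k_1)\not\parallel c(k_1+1)$) but not $k_1+1$, so being an interval it lies in $\{k\le k_1\}$, whence $c(k_2)\perp w$ and $c(k_2+1)\perp w$, so $c(k_2)\parallel c(k_2+1)$, contradicting $\Im\big(c(k_2)\overline{c(k_2+1)}\big)\ne 0$. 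Thus \eqref{complex.sis.pr.eq2}--\eqref{complex.sis.pr.eq3} hold.

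\emph{Sufficiency.} Assume \eqref{complex.sis.pr.eq2}--\eqref{complex.sis.pr.eq3}; then the nonzero coefficients split into at most two blocks $c(K_-),\dots,c(k^*)$ and $c(k^*+1),\dots,c(K_+)$, each lying on a single line through the origin, where the ``bend'' at $k^*$ (if present) is the unique place at which \eqref{complex.sis.pr.eq3} is nonzero. Let $g\in S_\C(h)$ satisfy $|g(x)|=|f(x)|$ on $\R$; then $g$ vanishes off $[K_--1,K_++1]$ with $g(K_--1)=g(K_++1)=0$, so $g$ is determined by its values at the integers in $[K_-,K_+]$. On a unit interval $[j,j+1]$ with $K_-\le j<j+1\le K_+$ both $|f|^2$ and $|g|^2$ are quadratic polynomials in the local parameter and agree throughout, so matching coefficients gives $|g(j)|=|c(j)|$ and $\Re\big(g(j)\overline{g(j+1)}\big)=\Re\big(c(j)\overline{c(j+1)}\big)$. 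Inside a block $c(j)\overline{c(j+1)}$ is real of modulus $|c(j)|\,|c(j+1)|$, so equality in $|\Re(g(j)\overline{g(j+1)})|\le|g(j)|\,|g(j+1)|$ forces $g(j)\overline{g(j+1)}=c(j)\overline{c(j+1)}$ and hence $g(j)/c(j)=g(j+1)/c(j+1)$, so $g$ equals a fixed unimodular multiple $z_i\in\T$ of $f$ on block $i$. Finally, the bend interval $[k^*,k^*+1]$ yields $\Re\big(z_1\bar z_2\,c(k^*)\overline{c(k^*+1)}\big)=\Re\big(c(k^*)\overline{c(k^*+1)}\big)$, a trigonometric equation whose only solutions, since $c(k^*)\overline{c(k^*+1)}\notin\R$, are $z_1=z_2$, giving $g=z_1 f$, and the ``reflected'' one, which---because on each block the squares of the arguments of the coefficients are constant---glues the two local pieces into $g=z\bar f$ for a suitable $z\in\T$. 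When there is no bend, $f$ is a unimodular constant times a real element of $S(h)$ and the single block already gives $g=zf$. Hence $\mathcal M_f=\{zf:z\in\T\}\cup\{z\bar f:z\in\T\}$.

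I expect the bend--interval analysis in the sufficiency part to be the crux: it is precisely the integer--knot structure of $h$ that prevents a ``partial'' phase flip (which would create a corner at a non--integer point and thus leave $S_\C(h)$) and that isolates exactly the two admissible families $\{zf\}$ and $\{z\bar f\}$; the combinatorial reduction in the necessity part is by comparison routine planar linear algebra. One could alternatively carry out the sufficiency through Theorem~\ref{complexpr.thm}, showing directly that \eqref{complex.sis.pr.eq2}--\eqref{complex.sis.pr.eq3} rule out any decomposition $f=u+v$ satisfying \eqref{complex.decom.2}--\eqref{complex.decom.3}, but the magnitude computation above seems more transparent.
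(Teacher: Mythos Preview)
Your proof is correct and follows essentially the same route as the paper: necessity via Theorem~\ref{necess.charac} together with the characterization of phase retrieval in $S(h)$, and sufficiency by the direct piecewise-quadratic analysis of $|f|^2$ on unit intervals, propagating a unimodular factor along each ``collinear'' block and resolving the two-fold ambiguity at the single bend. The paper's sufficiency is organized slightly differently---it first normalizes so that the coefficients on one block are real and then inducts from a base point $k_0$, obtaining at the bend the dichotomy $g(k_0+1)=f(k_0+1)$ or $g(k_0+1)=\overline{f(k_0+1)}$---which makes the identification of the ``reflected'' solution with $z\bar f$ more transparent than your trigonometric equation, but the underlying argument is the same.
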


\subsection{Quaternion conjugate phase retrieval}\label{quaternionpr.section}
 For a quaternion $q= a+b{\bf i}+c{\bf j}+d{\bf k}\in\mathcal Q_8$, denote its conjugate, real part, and norm by
 $q^{*}=a-b{\bf i}-c{\bf j}-d{\bf k}$, $\Re(q)=a$, and
$ \|q\|={\sqrt {qq^{*}}}={\sqrt {q^{*}q}}={\sqrt {a^{2}+b^{2}+c^{2}+d^{2}}}$
respectively.
A linear space  ${\mathcal W}$ of quaternion-valued functions on a domain $D$
is  {\em quaternion conjugate invariant}
if
\begin{equation}
\label{quaternioninvariant.def} qf,  (qf)^*\in {\mathcal W} \ {\rm \ for \ all} \ q\in {\mathcal Q}_8 \ {\rm and} \ f\in {\mathcal W}.
\end{equation}
For  $f 
 \in \mathcal W$, define
\begin{equation}\label{qunpr.def}
{\mathcal M}_f:=\{g\in {\mathcal W}, \   \|g(x)\|=\|f(x)\| \ \ {\rm for\ all} \ \ x\in D\},
\end{equation}
and  write
\begin{eqnarray}\label{f1234.def}
f & \hskip-0.08in = & \hskip-0.08in  \frac{f+f^*}{2}+\frac{-{\bf i} f-({\bf i} f)^*}{2} {\bf i} +
\frac{-{\bf j} f-({\bf j} f)^*}{2} {\bf j}+ \frac{-{\bf k} f-({\bf k} f)^*}{2} {\bf k}
\nonumber \\
& \hskip-0.09in =: & \hskip-0.08in f_1+f_2 {\bf i}+f_3 {\bf j}+f_4{\bf k}.
\end{eqnarray}
Since the linear space ${\mathcal W}$ is quaternion conjugate invariant, we obtain
$f_i, 1\le i\le 4$, and their quaternion linear combinations
 belong to ${\mathcal W}$,
\begin{equation} q_1 f_1+q_2 f_2+q_3 f_3 + q_4 f_4
\in {\mathcal W}\ {\rm for \ all} \  q_i\in {\mathcal Q}_8, 1\le i\le 4.
\end{equation}
Observe that
\begin{equation*}
\Big\|\sum_{i=1}^4 q_i f_i\Big\|^2= \sum_{i=1}^4 \|q_i\|^2 |f_i|^2+\sum_{1\le i<j\le 4} f_i f_j (q_i q_j^*+q_j q_i^*)
\end{equation*}
for all $q_i\in {\mathcal Q}_8, 1\le i\le 4$. Then
$$
\Big\|\sum_{i=1}^4 q_i f_i\Big\|^2= \sum_{i=1}^4 |f_i|^2= \|f\|^2
$$
if  $q_i\in {\T}_8=\{q\in {\mathcal Q}_8, \|q\|=1\}, 1\le i\le 4$,
 are unit quaternions  satisfying
\begin{equation}\label{qiqj.def}
q_i q_j^*+q_j q_i^*=0\ {\rm for \ all} \ 1\le i<j\le 4.
\end{equation}
 Therefore
\begin{equation*}
{\mathcal M}_f\supset\Big\{ \sum_{i=1}^4 q_i f_i\in  {\mathcal W}, \
q_i\in {\T}_8, 1\le i\le 4, \ {\rm satisfies\ \eqref{qiqj.def}}\Big\}.
\end{equation*}
%
%
 In this paper, we say that $f\in {\mathcal W}$ is {\em quaternion conjugate phase retrieval} in  ${\mathcal W}$ if
 \begin{equation} \label{qunpr.def}
{\mathcal M}_f=\Big\{ \sum_{i=1}^4 q_i f_i\in  {\mathcal W}, \
q_i\in {\T}_8, 1\le i\le 4, \ {\rm satisfies\ \eqref{qiqj.def}}\Big\},
\end{equation}
and the linear space ${\mathcal W}$ is  {\em quaternion conjugate phase retrieval} if every function in ${\mathcal W}$ is  quaternion conjugate phase retrieval.
In this section, we extend the characterizations in Theorems \ref{complexpr.thm} and \ref{complexpr.thm2} for complex-valued functions to
quaternion-valued functions, see Section \ref{quaternionpf.thm.pfsection} for the proof. 

\begin{thm}\label{quaternionpr.thm}
Let ${\mathcal W}$ be a quaternion linear space of functions  on a domain $D$ which is invariant under quaternion conjugation.
Then  the following statements hold.

\begin{itemize}
\item [{(i)}] $f\in {\mathcal W}$  is quaternion conjugate phase retrieval if and only if there do not exist  $u, v\in {\mathcal W}$ such that
 \begin{equation}  \label{quaternionpr.thm.eq3}
f=u+v,
\end{equation}
 \begin{equation}  \label{quaternionpr.thm.eq4}
\Re( u(x) v^*(x))=0 \ \ {\rm for \ all} \ x\in D,
\end{equation}
and
\begin{equation}  \label{quaternionpr.thm.eq5}
 \Re (u(x) v^*(y) + u(y) v^*(x))\ne 0
 \ \ {\rm for \ some} \   x, y\in D.
\end{equation}

\item[{(ii)}]  The quaternion linear space  ${\mathcal W}$  is quaternion conjugate
phase retrieval if and only if there do not exist  $u, v\in {\mathcal W}$ such that
\eqref{quaternionpr.thm.eq4}
 and \eqref{quaternionpr.thm.eq5}
 hold.
 \end{itemize}

\end{thm}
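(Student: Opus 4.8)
The plan is to run the argument in the Euclidean model $\mathcal{Q}_8\cong\R^4$, where it becomes a phase‑retrieval statement for $\R^4$‑valued functions up to an orthogonal transformation, and then to deduce (ii) from (i) exactly as Theorem \ref{complexpr.thm2} follows from Theorem \ref{complexpr.thm}. First I would record the dictionary: identifying a quaternion with the vector of its four real coordinates, one has $\Re(pq^*)=\langle p,q\rangle$ for the standard inner product on $\R^4$, $\|q\|^2=\langle q,q\rangle$, and in particular the form $(p,q)\mapsto\Re(pq^*)$ is symmetric. Under this identification, unit quaternions $q_1,\dots,q_4$ satisfy \eqref{qiqj.def} precisely when $\{q_1,q_2,q_3,q_4\}$ is an orthonormal basis of $\R^4$, i.e. $Q:=[\,q_1\ q_2\ q_3\ q_4\,]\in O(4)$; and since each coordinate function $f_i$ in \eqref{f1234.def} is real‑valued, $\sum_{i=1}^4 q_i f_i$ is then the pointwise image $Q\circ f$ of $f$ (writing $f=\sum_{i=1}^4 e_i f_i$ with $e_1=1,e_2={\bf i},e_3={\bf j},e_4={\bf k}$). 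Because each $q_i f_i\in\mathcal{W}$ by the quaternion conjugate invariance \eqref{quaternioninvariant.def}, the trivial set on the right of \eqref{qunpr.def} is exactly $\{\,Q\circ f:Q\in O(4)\,\}$.

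Next, for $g\in\mathcal{W}$ I would put $u=(f+g)/2$ and $v=(f-g)/2$, so that $u,v\in\mathcal{W}$ and $f=u+v$; from $\|f(x)\|^2-\|g(x)\|^2=4\Re(u(x)v^*(x))$ it follows that $g\in\mathcal{M}_f$ if and only if \eqref{quaternionpr.thm.eq4} holds, and $g\mapsto(u,v)$, $(u,v)\mapsto u-v$ are mutually inverse bijections between $\mathcal{M}_f$ and the family of decompositions $f=u+v$ with $u,v\in\mathcal{W}$ obeying \eqref{quaternionpr.thm.eq4}. The crux is the bilinear identity
\[
\langle f(x),f(y)\rangle-\langle g(x),g(y)\rangle \;=\; 2\,\Re\big(u(x)v^*(y)+u(y)v^*(x)\big),\qquad x,y\in D,
\]
obtained by expanding each inner product in $u$ and $v$ and using symmetry of $(p,q)\mapsto\Re(pq^*)$. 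It shows that the negation of \eqref{quaternionpr.thm.eq5} holds for a pair $(u,v)$ exactly when $\langle f(x),f(y)\rangle=\langle g(x),g(y)\rangle$ for all $x,y\in D$. I would then prove that equality of these Gram kernels is equivalent to $g=Q\circ f$ for some $Q\in O(4)$: the forward implication is automatic; for the converse, since $\mathrm{span}\{f(x):x\in D\}$ sits inside the four‑dimensional space $\R^4$, equality of Gram kernels makes $f(x)\mapsto g(x)$ extend to a well‑defined linear isometry of $\mathrm{span}\{f(x):x\in D\}$ onto $\mathrm{span}\{g(x):x\in D\}$, which in turn extends to some $Q\in O(4)$. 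Chaining these equivalences with the bijection above yields (i): $f$ fails to be quaternion conjugate phase retrieval iff there is $g\in\mathcal{M}_f$ outside $\{\,Q\circ f:Q\in O(4)\,\}$ iff there exist $u,v\in\mathcal{W}$ with $f=u+v$ satisfying both \eqref{quaternionpr.thm.eq4} and \eqref{quaternionpr.thm.eq5}.

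Part (ii) is then immediate from (i): if $u,v\in\mathcal{W}$ satisfy \eqref{quaternionpr.thm.eq4} and \eqref{quaternionpr.thm.eq5}, then $f:=u+v\in\mathcal{W}$ is not quaternion conjugate phase retrieval by (i), hence $\mathcal{W}$ is not; conversely, if $\mathcal{W}$ is not quaternion conjugate phase retrieval, (i) applied to a non‑retrievable $f\in\mathcal{W}$ produces such a pair $u,v$. I expect the one step deserving genuine care to be the upgrade from the pointwise magnitude equality $\|g(x)\|=\|f(x)\|$ (all $x$), together with the failure of \eqref{quaternionpr.thm.eq5}, to the global relation $g=Q\circ f$ with a single $x$‑independent $Q\in O(4)$ — i.e. the Gram‑kernel equality and the extension of the resulting subspace isometry to $O(4)$ — along with the routine bookkeeping, via \eqref{quaternioninvariant.def} and linearity of $\mathcal{W}$, that keeps $u$, $v$, $g$ and each $q_i f_i$ inside $\mathcal{W}$; everything else is the same bilinear algebra underlying Theorems \ref{complexpr.thm} and \ref{complexpr.thm2}.
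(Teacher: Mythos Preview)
Your proposal is correct and follows essentially the same route as the paper: identify $\mathcal{Q}_8$ with $\R^4$ via $\Re(pq^*)=\langle p,q\rangle$, show that the trivial ambiguity set in \eqref{qunpr.def} coincides with the $O(4)$-orbit of $f$, and then run the $(u,v)=((f+g)/2,(f-g)/2)$ decomposition together with the Gram-kernel/isometry-extension argument. The paper packages the last two steps by reducing to its vector-valued phase retrieval Theorem~\ref{hilbertpr.thm} for $\mathcal{W}_\R^4$ (with Assumption~\ref{unitary.assumption} supplied by Remark~\ref{assump.remark} since $\dim\R^4<\infty$), whereas you unfold that proof in place; the content is the same.
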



For a quaternion conjugate invariant space ${\mathcal W}$ of functions on a domain $D$, let
 \begin{equation}\label{Wr.def00}
 \mathcal W_\R=\{ \Re(f), \ f\in {\mathcal W}\}
 \end{equation}
 be the linear subspace of ${\mathcal W}$ containing all real functions in $\mathcal W$.
For $f=f+0{\bf i}+0{\bf j}+0{\bf k}\in {\mathcal W}_{\R}$,  one may verify that
 $$\Big\{ \sum_{i=1}^4 q_i f_i\in  {\mathcal W}_{\R}, \
q_i\in {\T}_8, 1\le i\le 4, \ {\rm satisfy\ \eqref{qiqj.def}}\Big\}=\{\pm f\}.$$
We say  that a  function $f\in{\mathcal W}_\R$ is {\em phase retrieval} in ${\mathcal W}_\R$ if $\mathcal M_f\cap {\mathcal W}_\R=\{\pm f\}$.
Therefore we have the following result, cf. Corollary \ref{necess.charac.cor}.

\begin{cor}
Let ${\mathcal W}$ be a quaternion linear space of functions on a domain $D$ which is invariant under quaternion conjugation.
If $f\in {\mathcal W}$ is quaternion conjugate phase retrieval, then $\Re(f)$ is  phase retrieval  in ${\mathcal W}_\R$.
\end{cor}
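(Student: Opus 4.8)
The plan is to show the contrapositive: if $\Re(f)$ is \emph{not} phase retrieval in ${\mathcal W}_\R$, then $f$ is not quaternion conjugate phase retrieval in ${\mathcal W}$. By the characterization \eqref{realnonseparable} for phase retrieval of real-valued functions (applied with the ambient real space ${\mathcal W}_\R$), the failure of phase retrieval for $\Re(f)$ produces nonzero real-valued functions $g_1,g_2\in {\mathcal W}_\R$ with $\Re(f)=g_1+g_2$ and $g_1 g_2=0$ on $D$. The idea is then to lift this decomposition of the real-valued function $\Re(f)$ to a decomposition of $f$ itself of the form $f=u+v$ with $u,v\in {\mathcal W}$ obeying \eqref{quaternionpr.thm.eq4} and \eqref{quaternionpr.thm.eq5}, so that part (i) of Theorem \ref{quaternionpr.thm} gives the conclusion.

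First I would write $f=\Re(f)+P$, where $P=f_2{\bf i}+f_3{\bf j}+f_4{\bf k}$ is the ``pure'' part from \eqref{f1234.def}; note $P\in {\mathcal W}$ by quaternion conjugate invariance. Then I set $u=g_1$ and $v=g_2+P$, both in ${\mathcal W}$, so $f=u+v$ and \eqref{quaternionpr.thm.eq3} holds. Next I check \eqref{quaternionpr.thm.eq4}: since $g_1$ is real-valued, $\Re(u(x)v^*(x))=\Re\big(g_1(x)(g_2(x)-P(x))\big)=g_1(x)g_2(x)-g_1(x)\Re(P(x))=0$ for all $x\in D$, because $g_1g_2=0$ on $D$ and $P$ is purely imaginary so $\Re(P)=0$. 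Finally I must verify the ``mixing'' condition \eqref{quaternionpr.thm.eq5}, i.e. that there exist $x,y\in D$ with $\Re\big(u(x)v^*(y)+u(y)v^*(x)\big)\ne 0$. Here $\Re\big(g_1(x)(g_2(y)-P(y))+g_1(y)(g_2(x)-P(x))\big)=g_1(x)g_2(y)+g_1(y)g_2(x)$ since the $P$-terms are purely imaginary and multiplied by real scalars. Since $g_1,g_2$ are nonzero and $g_1 g_2\equiv 0$, their supports are disjoint and nonempty; picking $x\in\supp g_1$ and $y\in\supp g_2$ gives $g_1(x)g_2(y)\ne 0$ while $g_1(y)g_2(x)=0$, so the sum is nonzero. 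Thus \eqref{quaternionpr.thm.eq5} holds, and by Theorem \ref{quaternionpr.thm}(i), $f$ is not quaternion conjugate phase retrieval, completing the contrapositive.

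The main obstacle I anticipate is making the appeal to the real-variable characterization \eqref{realnonseparable} clean: that statement was quoted for $\Re({\mathcal C})$ where ${\mathcal C}$ is a \emph{complex} conjugate invariant space, and I need the analogous fact that a real function $h$ is phase retrieval in an arbitrary real linear space iff there are no nonzero $g_1,g_2$ in that space with $h=g_1+g_2$ and $g_1 g_2=0$. This is really just \cite[Theorem 2.1]{YC16} applied directly to the real space ${\mathcal W}_\R$, so the fix is simply to observe that ${\mathcal W}_\R$ is itself a real linear space of real-valued functions on $D$ and invoke that theorem verbatim; no new argument is needed. The only other point deserving a word is that $P\in {\mathcal W}$ and $\Re(P)=0$ pointwise, both of which are immediate from \eqref{f1234.def} and the quaternion conjugate invariance \eqref{quaternioninvariant.def}.
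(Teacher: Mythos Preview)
Your proposal is correct and mirrors the paper's own approach. The paper does not give an explicit proof of this corollary; it simply writes ``Therefore we have the following result, cf.\ Corollary~\ref{necess.charac.cor},'' pointing the reader to the complex-case argument in Theorem~\ref{necess.charac}. Your contrapositive argument is exactly the quaternion analogue of that proof: decompose $\Re(f)=g_1+g_2$ with $g_1g_2=0$ via \cite[Theorem~2.1]{YC16}, attach the pure part $P=f-\Re(f)$ to one of the two summands, and invoke Theorem~\ref{quaternionpr.thm}(i). The only cosmetic difference is that the paper's proof of Theorem~\ref{necess.charac} attaches the imaginary part to the \emph{first} summand (their $u=\frac{f-\bar f+f_1}{2}$, $v=\frac{f_2}{2}$), whereas you attach $P$ to the second; this is immaterial since the conditions \eqref{quaternionpr.thm.eq4}--\eqref{quaternionpr.thm.eq5} are symmetric in $u$ and $v$.
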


For a quaternion conjugate invariant space ${\mathcal W}$, let 
 \begin{equation}
 \mathcal W_\C=\{ \Re(f) +\Re(-{\bf i}f) {\bf i}, \ f\in {\mathcal W}\}
 \end{equation}
 be the linear subspace of ${\mathcal W}$ that  contains all complex functions in $\mathcal W$.
 For  any complex function $f=f_1+f_2{\bf i}+0{\bf j}+0{\bf k}\in {\mathcal W}_{\C}$ and $z\in \T$,
 one may verify that
%
   $$ \Big\{ \sum_{i=1}^4 q_i f_i\in  {\mathcal W}_{\C},
q_i\in {\T}_8, 1\le i\le 4, \ {\rm satisfy\ \eqref{qiqj.def}}\Big\}= \{z (f_1\pm f_2 {\bf i}), \ z\in \T\}.$$
We say that a complex function $f\in{\mathcal W}_\C$ is {\em complex conjugate phase retrieval} in ${\mathcal W}_\C$ if
$$\mathcal M_f\cap {\mathcal W}_\C=\{zf, z\in\T\}\cup \{z \bar f, z\in \T\}.$$
 Therefore we have the following result about  complex conjugate phase
retrieval in ${\mathcal W}_\C$.

\begin{cor}
Let ${\mathcal W}$ be a quaternion linear space of functions on a domain $D$ which is invariant under quaternion conjugation.
If $f=f_1+f_2{\bf i}+ f_3{\bf j}+ f_4{\bf k}\in {\mathcal W}$ is quaternion conjugate phase retrieval in ${\mathcal W}$, then $f_1+f_2{\bf i}$ is
complex conjugate phase
retrieval in ${\mathcal W}_\C$.
\end{cor}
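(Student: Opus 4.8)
The plan is to argue by contraposition, reducing the claim to the scalar characterization Theorem~\ref{complexpr.thm} together with the quaternion characterization Theorem~\ref{quaternionpr.thm}(i). First I would record two elementary compatibility facts. (a) The subspace ${\mathcal W}_\C$ is a complex linear space invariant under complex conjugation: for a complex-valued $h=h_1+h_2{\bf i}\in{\mathcal W}_\C\subset{\mathcal W}$ and $z=a+b{\bf i}\in\C$ one has $zh=(ah_1-bh_2)+(ah_2+bh_1){\bf i}\in{\mathcal W}_\C$ since $h_1,h_2$ are real scalar functions that commute with $z$, and $\bar h=h^*\in{\mathcal W}$ is again complex-valued. (b) On complex-valued functions the quaternion norm $\|\cdot\|$ coincides with the complex modulus $|\cdot|$ and the quaternion conjugate $(\cdot)^*$ coincides with the complex conjugate $\overline{(\cdot)}$; hence the notion of \emph{complex conjugate phase retrieval in ${\mathcal W}_\C$} defined just above is exactly the notion in Theorem~\ref{complexpr.thm} applied to the complex conjugate invariant space ${\mathcal C}={\mathcal W}_\C$.

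Next, suppose $g:=f_1+f_2{\bf i}$ is \emph{not} complex conjugate phase retrieval in ${\mathcal W}_\C$. By Theorem~\ref{complexpr.thm} there exist $u,v\in{\mathcal W}_\C$ with $g=u+v$, $\Re(u(x)\overline{v(x)})=0$ for all $x\in D$, and $\Re(u(x)\overline{v(y)}+u(y)\overline{v(x)})\ne 0$ for some $x,y\in D$. Set $h:=f_3{\bf j}+f_4{\bf k}$, which lies in ${\mathcal W}$ because it is a quaternion linear combination of $f_3,f_4\in{\mathcal W}$ (see \eqref{f1234.def} and the lines following it). Define $U:=u+h$ and $V:=v$; then $U,V\in{\mathcal W}$ and $U+V=(u+v)+h=g+h=f$.

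The one computational ingredient is the observation that if $a$ lies in the real linear span of $1,{\bf i}$ and $b$ lies in the real linear span of ${\bf j},{\bf k}$, then both $ab$ and $ba$ lie in the real linear span of ${\bf j},{\bf k}$, so $\Re(ab)=\Re(ba)=0$; in particular $\Re(h(x)V^*(y))=0$ for all $x,y\in D$, since $h(x)\in\{c{\bf j}+d{\bf k}:c,d\in\R\}$ pointwise while $V=v$ is complex-valued. Combining this with $\Re(u(x)v^*(x))=\Re(u(x)\overline{v(x)})=0$, I would check that $\Re(U(x)V^*(x))=\Re(u(x)v^*(x))+\Re(h(x)v^*(x))=0$ for all $x\in D$, while $\Re(U(x)V^*(y)+U(y)V^*(x))=\Re(u(x)\overline{v(y)}+u(y)\overline{v(x)})\ne 0$ for the same $x,y$. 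Then Theorem~\ref{quaternionpr.thm}(i) forces $f$ not to be quaternion conjugate phase retrieval in ${\mathcal W}$, contradicting the hypothesis; hence $g=f_1+f_2{\bf i}$ is complex conjugate phase retrieval in ${\mathcal W}_\C$.

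I do not anticipate a genuine obstacle here, since the argument is a direct reduction to the two characterization theorems; the points that need a little care are fact (b) — making sure that the norm and conjugation conventions on complex-valued quaternion functions are precisely those used in Theorem~\ref{complexpr.thm} — and the vanishing of the mixed cross terms $\Re(h\,v^*)$, which is exactly the multiplication rule that the product of an element of the span of $1,{\bf i}$ with an element of the span of ${\bf j},{\bf k}$ again lies in the span of ${\bf j},{\bf k}$.
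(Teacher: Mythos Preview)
Your proof is correct and follows the same approach the paper implicitly has in mind: the paper states this corollary without an explicit proof (just a ``Therefore''), but your contraposition argument via the characterizations in Theorems~\ref{complexpr.thm} and~\ref{quaternionpr.thm}(i) is exactly parallel to the paper's proof of the analogous result Theorem~\ref{necess.charac}, where a ``bad'' decomposition at the lower level is lifted to a bad decomposition of $f$ by absorbing the orthogonal component into one of the two pieces. The key algebraic point --- that $\Re(h\,v^*)=0$ whenever $h$ takes values in $\mathrm{span}\{\mathbf{j},\mathbf{k}\}$ and $v$ is complex-valued --- is handled correctly.
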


\section{Phase retrieval of vector-valued functions}\label{pr.vector.sec}

%

Let ${\mathcal H}$ be  a  real separable Hilbert space with inner product and norm denoted by $\langle \cdot, \cdot\rangle$ and $\|\cdot\|$,
and  ${\mathcal U}(\mathcal{H})$ be the group of unitary operators  $U$ on ${\mathcal H}$
that satisfy
  $$U^*U = UU^* = I,$$
   where $U^*$ is the adjoint of $U$. We say that a function $f$  on a domain $D$ is  {\em $\mathcal H$-valued},   or {\em vector-valued} when unambiguous, if
$$f(x)\in \mathcal{H}  \ \mbox{ for\ all}\ x\in D,$$
and  a real linear space ${\mathcal S}$  of  vector-valued functions if  on a domain $D$ is
 {\em unitary invariant}  if
  \begin{equation}\label{invariant.assumption}
Uf\in {\mathcal S} \  \ {\rm  for \ all }  \ f\in {\mathcal S}\ {\rm and}  \ U\in {\mathcal U}(\mathcal{H}).
\end{equation}
Our representative  unitary invariant linear spaces are
\begin{equation}\label{cr2.def}
{\mathcal C}_{\R^2}=\left\{ \left (\begin{array}{c} \Re(f) \\ \Im(f)\end{array}\right)\!, \   f\in {\mathcal C}\right\}
\end{equation}
associated with a complex  conjugate invariant linear space ${\mathcal C}$,
 the linear space of  vector fields on a spatially distributed network  \cite{bamieh02, cjs17acha,jcs17, motee08, ms17},
 and
vector-valued reproducing kernel spaces in multi-task learning  \cite{arl12, cvtu10, lsz19, mp05}.

We say that
 a family $\Phi $  of linear measurements on ${\mathcal S}$ is  {\em unitary invariant} if
\begin{equation}\label{PhiInvariance.def}
\phi(Uf)=U\phi(f)\in {\mathcal H}\  \ {\rm for \ all} \   f\in {\mathcal S}\ {\rm  and}  \ U\in {\mathcal U}(\mathcal{H}).
\end{equation}
 For a  vector-valued function $f\in {\mathcal S}$ and a unitary invariant family $\Phi$ of  linear measurements,
let
$$\mathcal{M}_{f, \Phi}=\big\{g\in {\mathcal S},  \  \|\phi(g)\|=\|\phi(f)\| \ \ {\rm for \ all} \ \ \phi\in \Phi\big\} $$
contain all vector-valued functions
$g\in {\mathcal S}$  such that $g$ and $f$ have the same magnitude observations.
By the unitary invariance of the  linear space ${\mathcal S}$ and the family $\Phi$ of linear measurements,
 we have
  \begin{equation}\label{vector.mf.eq}
  {\mathcal M}_{f, \Phi}\supset 
  \{Uf, \ U\in {\mathcal U}(\mathcal{H})\}, \ f\in {\mathcal S}.
  \end{equation}
In this paper, we  say that a vector-valued function $f\in  {\mathcal S}$ is  {\em phase retrieval} in  ${\mathcal S}$ if
the inclusion in \eqref{vector.mf.eq} becomes an equality, i.e.,
$${\mathcal M}_{f, \Phi}=\{Uf, \  U\in {\mathcal U}(\mathcal{H})\},$$
 and the unitary invariant space ${\mathcal S}$ is  phase retrieval
 if  every vector-valued function in ${\mathcal S}$ is  phase retrieval, see
 \cite{ADGY16, BCE06, CCD16, YC16, cjs17, CS18, mallat15, sun17, Thakur11,  WAM15} for phase retrieval of   scalar-valued functions in various
 function spaces.  In Section \ref{vectorpr.subsection}, we  characterize  the phase  retrieval  of vector-valued functions in ${\mathcal S}$, see  Theorems \ref{hilbertpr.thm}  and \ref{hilbertpr.thm2}.
 Applying our characterization in the real scalar setting, we have  the well known  equivalence between the complement property
 for linear  measurements $\Phi$ and the phase  retrieval of linear space ${\mathcal S}$ \cite{RGrohs17, BBCE09, BCE06, CCD16},
 see  Corollary \ref{complement.cor2}.


Let   $\tilde {\mathcal H}$ be a linear subspace of the  Hilbert space ${\mathcal{H}}$, denote
 the projection from ${\mathcal{H}}$ onto ${\tilde {\mathcal H}}$ by $P_{\tilde {\mathcal H}}$, and set
\begin{equation}\label{PHtildeH.def}
 P_{\tilde {\mathcal H}} {\mathcal S}= \{ P_{\tilde {\mathcal H}}f, \  f\in {\mathcal S}\}.\end{equation}
As an application of Theorems \ref{hilbertpr.thm} and \ref{hilbertpr.thm2}, we show  that the  projection  $P_{\tilde {\mathcal H}} f$
of a phase retrieval  function  $f$ in ${\mathcal S}$  is  phase  retrieval in  the projection  space
$P_{\tilde {\mathcal H}} {\mathcal S}$, see Theorem \ref{subspacePR.thm} in Section  \ref{projective.subsection} and
cf. \cite{CCPW16, edidin17} for phase retrieval by projections.

Let ${\mathcal G}=(V, E)$ be a simple graph and ${\bf f}=({\bf f}_i)_{i\in V}$ be a $d$-dimensional vector field on ${\mathcal G}$, where
${\bf f}_i\in \Rd, i\in V$. In Section
\ref{vector.subsection}, we consider the problem whether  the vector field  ${\bf f}$ can be reconstructed, up to  an orthogonal matrix,
 from its  absolute magnitudes
$\|{\bf f}_i\|$ at all vertices $i\in V$ and relative magnitudes $\|{\bf f}_i-{\bf f}_j\|$
of neighboring vertices $(i,j)\in E$.  In other words,  given any vector field  ${\bf g}=({\bf g}_i)_{i\in V}$ satisfying
\begin{equation}  \label{vectorfield.thm.pfeq1}
\|{\bf g}_i\|=\|{\bf f}_i\| \ \ {\rm for \ all}\ i \in V
\end{equation}
and
\begin{equation}\label{vectorfield.thm.pfeq2}
\|{\bf g}_i-{\bf g}_j\|=\|{\bf f}_i-{\bf f}_j\| \ \ {\rm for \ all}   \ (i,j)\in E,
\end{equation}
 we can find  an orthogonal matrix $U$ of size $d\times d$ such that  ${\bf g}=U{\bf f}$.
  For the case that $\mathcal G$ is a complete graph, we show in  Theorem \ref{vectorfieldcompletegraph.thm}
   that a  vector field ${\bf f}=({\bf f}_i)_{i\in V}$ on $\mathcal G$  is determined, up to an orthogonal matrix,
 from its absolute  magnitudes  $\|{\bf f}_i\|$ at all vertices $i\in V$ and relative  magnitudes $\|{\bf f}_i-{\bf f}_j\|$
 between vertices $i,j\in V$. For an arbitrary simple graph $\mathcal G$, given a    vector field
 ${\bf f}$, we introduce a $d$-simplex graph
 ${\mathcal G}_{\bf f}=(V_{\bf f}, E_{\bf f})$ associated with $\bf f$, and  show in Theorem  \ref{vectorfield.thm}
 that $\bf f$  is determined, up to an orthogonal matrix,
 from its absolute  magnitudes at vertices and relative  magnitudes between vertices when
 the $d$-simplex graph  ${\mathcal G}_{\bf f}$ is connected.



\subsection{Phase retrieval for vector-valued setting}\label{vectorpr.subsection}
 For a vector-valued function $f\in \mathcal S$,  by   the unitary invariance of  linear space ${\mathcal S}$
 and the family $\Phi$ of linear measurements,   we have
 \begin{equation} \label{unitary.pro}
 \langle \phi(g), \tilde \phi(g)\rangle 
 =  \langle \phi(f), \tilde \phi(f)\rangle \ \ {\rm for \ all} \ \phi, \tilde \phi\in \Phi,
 \end{equation}
 when $g=Uf$ for some $U\in {\mathcal U}({\mathcal H})$.
 To consider the phase retrieval of vector-valued functions $f$ from their magnitude measurements  $\|\phi(f)\|, \phi\in \Phi$,
  we assume that the converse in \eqref{unitary.pro} holds. 

 \begin{assump}\label{unitary.assumption}
Let  $f$ be a $\mathcal H$-valued function in the linear space ${\mathcal S}$.  
Then for any  $\mathcal H$-valued function $g\in {\mathcal S}$ satisfying \eqref{unitary.pro},
there exists $U\in {\mathcal U}({\mathcal H})$ such that $g=Uf$.
 \end{assump}

 A necessary condition for  Assumption \ref{unitary.assumption} to hold is that
   \begin{equation}\label{unitary.injective}
T_{\Phi}: \ {\mathcal S}\ni  f\mapsto \{\phi(f)\}_{\phi\in\Phi}\rm{\ is\ injective},
  \end{equation}
  or equivalently    the null space of the above map is trivial,
   \begin{equation}\label{NPhi.zero}
N_{ T_{\Phi}}=\{0\}.\end{equation}
   In the case that ${\mathcal H}$ is finite dimensional,  the injectivity in \eqref{unitary.injective} is also
   sufficient for the Assumption  \ref{unitary.assumption}, see Remark \ref{assump.remark}.

In the next theorem, we  characterize   the phase retrieval of   vector-valued functions in a unitary invariant space, see
  Section  \ref{hilbertpr.thm.section} for the proof.

\begin{thm}\label{hilbertpr.thm}
Let ${\mathcal{H}}$ be a real separable Hilbert space,  ${\mathcal S}$ be a unitary invariant space of  ${\mathcal H}$-valued functions
 on the domain $D$, and $\Phi$ be a unitary invariant set of linear measurements satisfying Assumption  \ref{unitary.assumption}.
Then  $f\in {\mathcal S}$ is  phase retrieval if and only if  there do not exist  $u, v\in {\mathcal S}$ such that
 \begin{equation}  \label{hilbertpr.thm.eq3}
f=u+v,
\end{equation}
  \begin{equation}  \label{hilbertpr.thm.eq4}
\langle \phi(u), \phi(v)\rangle=0 \ \ {\rm for \ all} \ \phi\in \Phi,
\end{equation}
and
\begin{equation}  \label{hilbertpr.thm.eq5}
 \langle \phi_0(u), \phi_1(v)\rangle + \langle \phi_1(u), \phi_0(v)\rangle\ne 0
 \ \ {\rm for \ some} \   \phi_0, \phi_1\in \Phi.
\end{equation}
\end{thm}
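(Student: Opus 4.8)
The plan is to mirror the polarization argument behind Theorem~\ref{complexpr.thm}, with products of scalars replaced by inner products in $\mathcal{H}$. Two elementary facts will be used repeatedly: since $\mathcal S$ is a linear space, every linear combination of its members again lies in $\mathcal S$; and by the unitary invariance of $\mathcal S$ and $\Phi$, if $g=Uf$ with $U\in\mathcal U(\mathcal H)$ then $\phi(g)=U\phi(f)$ for all $\phi\in\Phi$, hence $\langle\phi(g),\tilde\phi(g)\rangle=\langle\phi(f),\tilde\phi(f)\rangle$ for all $\phi,\tilde\phi\in\Phi$. The converse of this last implication --- passing from equality of all pairwise inner products to the existence of a unitary $U$ with $g=Uf$ --- is precisely Assumption~\ref{unitary.assumption}, and it is the only substantive ingredient; I expect to invoke it exactly once.

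For the ``if'' direction I would assume $u,v\in\mathcal S$ satisfy \eqref{hilbertpr.thm.eq3}--\eqref{hilbertpr.thm.eq5} and set $g=u-v\in\mathcal S$. Expanding $\|\phi(g)\|^2$ and $\|\phi(f)\|^2$ and using \eqref{hilbertpr.thm.eq4} gives $\|\phi(g)\|^2=\|\phi(u)\|^2+\|\phi(v)\|^2=\|\phi(f)\|^2$ for every $\phi\in\Phi$, so $g\in\mathcal M_{f,\Phi}$. If $g=Uf$ for some $U$, the fact above forces $\langle\phi_0(g),\phi_1(g)\rangle=\langle\phi_0(f),\phi_1(f)\rangle$ for all $\phi_0,\phi_1$; expanding both sides from $g=u-v$ and $f=u+v$ and subtracting, the diagonal terms cancel and one is left with $\langle\phi_0(u),\phi_1(v)\rangle+\langle\phi_1(u),\phi_0(v)\rangle=0$ for all $\phi_0,\phi_1$, contradicting \eqref{hilbertpr.thm.eq5}. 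Hence $g$ lies in $\mathcal M_{f,\Phi}$ but outside the unitary orbit of $f$, so $f$ is not phase retrieval.

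For the ``only if'' direction I would suppose $f$ is not phase retrieval, pick $g\in\mathcal M_{f,\Phi}$ with $g\ne Uf$ for every $U\in\mathcal U(\mathcal H)$, and set $u=(f+g)/2$, $v=(f-g)/2$, both in $\mathcal S$. Then $f=u+v$, and $\langle\phi(u),\phi(v)\rangle=\frac14\big(\|\phi(f)\|^2-\|\phi(g)\|^2\big)=0$ for all $\phi$, so \eqref{hilbertpr.thm.eq3} and \eqref{hilbertpr.thm.eq4} hold. A direct expansion --- using that $\mathcal H$ is real, so the inner product is symmetric and the cross terms between $f$ and $g$ cancel --- gives $\langle\phi_0(u),\phi_1(v)\rangle+\langle\phi_1(u),\phi_0(v)\rangle=\frac12\big(\langle\phi_0(f),\phi_1(f)\rangle-\langle\phi_0(g),\phi_1(g)\rangle\big)$. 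Were this to vanish for all $\phi_0,\phi_1\in\Phi$, Assumption~\ref{unitary.assumption} would produce a unitary $U$ with $g=Uf$, contrary to the choice of $g$; hence \eqref{hilbertpr.thm.eq5} holds for some $\phi_0,\phi_1$, and the pair $u,v$ is as required.

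I do not anticipate a real obstacle beyond bookkeeping: the one step that uses genuine hypothesis is ``equal pairwise inner products $\Rightarrow$ unitarily equivalent,'' which is exactly why Assumption~\ref{unitary.assumption} is imposed, and which in the finite-dimensional case reduces to the classical fact that two families of vectors with the same Gram matrix differ by an orthogonal transformation (cf.\ Remark~\ref{assump.remark}). The only care needed is to keep straight which direction of that equivalence is free (from unitary invariance) and which requires the assumption.
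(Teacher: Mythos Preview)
Your proposal is correct and follows essentially the same route as the paper: set $g=u-v$ to show that the existence of such $u,v$ defeats phase retrieval, and conversely set $u=(f+g)/2,\ v=(f-g)/2$ starting from a bad $g\in\mathcal M_{f,\Phi}$, with Assumption~\ref{unitary.assumption} invoked exactly once to pass from equal pairwise inner products to unitary equivalence. The only cosmetic difference is that the paper phrases the necessity step as ``$\langle\phi_0(f),\phi_1(f)\rangle-\langle\phi_0(g),\phi_1(g)\rangle\neq 0$ hence $g\ne Uf$'' rather than the equivalent contrapositive you give.
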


Applying Theorem  \ref{hilbertpr.thm}, we have  a characterization to phase retrieval of   a unitary invariant space of vector-valued functions.

\begin{thm}\label{hilbertpr.thm2}
Let ${\mathcal{H}}$ be a real separable Hilbert space,  ${\mathcal S}$ be a unitary invariant space of ${\mathcal H}$-valued functions
 on the domain $D$, and $\Phi$ be a unitary invariant set of linear measurements satisfying Assumption  \ref{unitary.assumption}.
Then the unitary invariant space ${\mathcal S}$ is
 phase retrieval
 if and only if  there do not exist  $u, v\in {\mathcal S}$ such that
  \eqref{hilbertpr.thm.eq4}
and   \eqref{hilbertpr.thm.eq5} hold.
\end{thm}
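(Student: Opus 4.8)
The plan is to obtain Theorem \ref{hilbertpr.thm2} as a direct consequence of Theorem \ref{hilbertpr.thm}, using only that $\mathcal S$ being phase retrieval means, by definition, that \emph{every} $f\in\mathcal S$ is phase retrieval, together with the linearity of $\mathcal S$. The single structural fact to exploit is that one may pass freely between a pair $(u,v)$ of elements of $\mathcal S$ and the element $f:=u+v\in\mathcal S$ that it sums to, since $\mathcal S$ is a real linear space. Note also that Assumption \ref{unitary.assumption} is a standing hypothesis on the pair $(\mathcal S,\Phi)$, not on an individual function, so it remains available when Theorem \ref{hilbertpr.thm} is applied to any particular $f\in\mathcal S$.

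For the ``only if'' implication I would argue contrapositively. Suppose there exist $u,v\in\mathcal S$ satisfying \eqref{hilbertpr.thm.eq4} and \eqref{hilbertpr.thm.eq5}. Put $f:=u+v$; then $f\in\mathcal S$ and the triple $(f,u,v)$ satisfies \eqref{hilbertpr.thm.eq3}, \eqref{hilbertpr.thm.eq4} and \eqref{hilbertpr.thm.eq5}, so Theorem \ref{hilbertpr.thm} yields that $f$ is not phase retrieval, and hence $\mathcal S$ is not phase retrieval.

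For the ``if'' implication, assume that no $u,v\in\mathcal S$ satisfy \eqref{hilbertpr.thm.eq4} and \eqref{hilbertpr.thm.eq5} simultaneously, and fix an arbitrary $f\in\mathcal S$. Were $f$ not phase retrieval, Theorem \ref{hilbertpr.thm} would produce $u,v\in\mathcal S$ with $f=u+v$ and with \eqref{hilbertpr.thm.eq4}, \eqref{hilbertpr.thm.eq5} holding; discarding the relation $f=u+v$, this pair $(u,v)$ is precisely one excluded by hypothesis, a contradiction. Thus every $f\in\mathcal S$ is phase retrieval, i.e.\ $\mathcal S$ is phase retrieval.

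There is no genuine analytic obstacle here; the only points worth a sentence of care are the two noted above, namely the passage $(u,v)\leftrightarrow u+v$ via linearity and the inheritance of Assumption \ref{unitary.assumption}, together with the (automatic) observation that \eqref{hilbertpr.thm.eq5} already forces both $u$ and $v$ to carry some nonzero measurement, so that no degenerate pair with $u=0$ or $v=0$ can enter the discussion. All the content of the result is carried by Theorem \ref{hilbertpr.thm}; Theorem \ref{hilbertpr.thm2} is simply its ``global'' form, obtained by quantifying over $f$ and dropping the now-redundant decomposition constraint $f=u+v$.
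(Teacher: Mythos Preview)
Your proposal is correct and is exactly the approach the paper takes: the paper introduces Theorem \ref{hilbertpr.thm2} with the sentence ``Applying Theorem \ref{hilbertpr.thm}, we have a characterization to phase retrieval of a unitary invariant space of vector-valued functions'' and provides no separate proof, so your argument---setting $f:=u+v$ for one direction and discarding the decomposition constraint for the other---is precisely the intended justification.
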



\begin{remark}\label{assump.remark} {\rm
 In this remark, we show that in the finite-dimensional setting, i.e., $\dim {\mathcal H}<\infty$,
a unitary invariant set $\Phi$ of linear measurements satisfies Assumption  \ref{unitary.assumption} if and only if the
map  $T_\Phi$ in  \eqref{unitary.injective} is injective. The necessity is obvious since given any  function $g$ in the null
space $N_{T_\Phi}$, we have
$$\langle \phi(g), \tilde \phi(g)\rangle=0 \ {\rm for \ all} \ \phi, \tilde \phi\in \Phi.$$
Now we prove the sufficiency.  
Let
$W_f$ and $W_g$ be the  linear subspaces of ${\mathcal H}$ spanned by $\phi(f)$ and by $\phi(g), \phi\in \Phi$ respectively.
Then
the space  $W_f$ has  a  basis  $\phi_{n}(f), 1\le n\le N$. Applying
the  Gram-Schmidt procedure to the above basis, 
 we can construct
   an orthonormal basis
   \begin{equation}  \label {orthogonal.lem.pf.eq3++}
e_m=\sum_{n=1}^m a(m,n) \phi_{{n}}(f),  1\le m\le N
\end{equation}
of the linear space $W_f$, where  $ a(m,n), 1\le n\le m\le N$  are functions  of $\langle \phi_{i}(f), \phi_j(f)\rangle, 1\le i, j\le N$.
 Define
    \begin{equation*}  \label {orthogonal.lem.pf.eq4}
\tilde e_m=\sum_{n=1}^m a(m,n) \phi_{{n}}(g), 1\le m\le N.
\end{equation*}
Then $\tilde e_m, 1\le m\le N$, is an orthonormal basis of the space $W_g$ by \eqref{unitary.pro}
and \eqref{orthogonal.lem.pf.eq3++}.
Define a  unitary operator $U$  on ${\mathcal{H}}$ such that
\begin{equation}  \label {orthogonal.lem.pf.eq5}
Ue_n= \tilde e_n, \ 1\le n\le \dim {\mathcal H},
\end{equation}
where $\{e_n, N+1\le n\le \dim {\mathcal H}\}$ and
 $\{\tilde e_n, N+1\le n\le \dim {\mathcal H}\}$ are orthonormal bases of orthogonal complements of $W_f$ and $W_g$ in ${\mathcal H}$ respectively.
For the above  unitary operator $U$, we have
\begin{equation}
\phi(g)=U\phi(f)= \phi(Uf) \ {\rm for \ all}\  \phi\in \Phi,
\end{equation}
where the last equality follows from  unitary invariance  of the linear measurements $\Phi$.
This together with the injectivity hypothesis proves $g=Uf$ and hence Assumption  \ref{unitary.assumption} holds.
}
\end{remark}

The functions $u$ and $v$ in \eqref{hilbertpr.thm.eq5} must be nonzero functions.
The converse is true for the scalar   setting, i.e., ${\mathcal H}=\R$, since
\begin{eqnarray*} \langle \phi_0(u), \phi_1(v)\rangle + \langle \phi_1(u), \phi_0(v)\rangle
&\hskip-0.08in = & \hskip-0.08in
\phi_0(u) \phi_1(v)+ \phi_1(u)\phi_0(v)\\
&\hskip-0.08in = & \hskip-0.08in  \phi_0(u) \phi_1(v)\ne 0
\end{eqnarray*}
by  \eqref{unitary.injective} and \eqref{hilbertpr.thm.eq4}, where $\phi_0, \phi_1\in \Phi$ are so chosen that
$\phi_0(u) \ne 0$ and $\phi_1(v)\ne 0$.
Therefore by Theorem \ref{hilbertpr.thm},  we have the following result,  which is established in  \cite{YC16}
when  $\Phi$ is the set of  point-evaluation functionals. 

\begin{cor}\label{scalepr.cor}
Let  ${\mathcal S}$ be a linear space of real functions
 on the domain $D$, and $\Phi$ be a  set of linear measurements satisfying \eqref{unitary.injective}.
Then
$f\in {\mathcal S}$ is  phase retrieval  if and only if  there do not exist nonzero functions  $u, v\in {\mathcal S}$ such that
\begin{equation}\label{separable.def00}
f=u+v \ \ {\rm and}\  \
 \phi(u) \phi(v)=0 \ \ {\rm for \ all} \ \phi\in \Phi.
\end{equation}
\end{cor}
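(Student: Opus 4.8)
The plan is to specialize Theorem \ref{hilbertpr.thm} to the one-dimensional Hilbert space ${\mathcal H}=\R$. First I would note that $\dim \R=1<\infty$, so by Remark \ref{assump.remark} the injectivity hypothesis \eqref{unitary.injective} already guarantees that $\Phi$ satisfies Assumption \ref{unitary.assumption}; hence Theorem \ref{hilbertpr.thm} is applicable in this setting. Since ${\mathcal U}(\R)=\{\pm 1\}$, being phase retrieval in ${\mathcal S}$ in the sense of Section \ref{vectorpr.subsection}, namely ${\mathcal M}_{f,\Phi}=\{Uf,\ U\in{\mathcal U}(\R)\}$, coincides with the classical requirement ${\mathcal M}_{f,\Phi}=\{\pm f\}$ appearing in the statement, so it suffices to translate the negated condition in Theorem \ref{hilbertpr.thm} into the one in the corollary.

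Next, in the scalar setting the inner product is ordinary multiplication, so \eqref{hilbertpr.thm.eq4} reads $\phi(u)\phi(v)=0$ for all $\phi\in\Phi$, and \eqref{hilbertpr.thm.eq5} reads $\phi_0(u)\phi_1(v)+\phi_1(u)\phi_0(v)\ne 0$ for some $\phi_0,\phi_1\in\Phi$. I would then prove that the existence of $u,v\in{\mathcal S}$ satisfying \eqref{hilbertpr.thm.eq3}, \eqref{hilbertpr.thm.eq4} and \eqref{hilbertpr.thm.eq5} is equivalent to the existence of \emph{nonzero} $u,v\in{\mathcal S}$ with $f=u+v$ and $\phi(u)\phi(v)=0$ for all $\phi\in\Phi$. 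The forward implication is immediate: \eqref{hilbertpr.thm.eq5} forces both $u$ and $v$ to be nonzero, since if $u=0$ then $\phi_0(u)=\phi_1(u)=0$ and the left side of \eqref{hilbertpr.thm.eq5} vanishes (and symmetrically for $v$), while \eqref{hilbertpr.thm.eq3} and \eqref{hilbertpr.thm.eq4} are literally $f=u+v$ and $\phi(u)\phi(v)=0$.

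For the reverse implication, suppose $u,v$ are nonzero, $f=u+v$, and $\phi(u)\phi(v)=0$ for all $\phi\in\Phi$. Then \eqref{hilbertpr.thm.eq3} and \eqref{hilbertpr.thm.eq4} hold verbatim. By injectivity of $T_\Phi$ and $u\ne 0$ there exists $\phi_0\in\Phi$ with $\phi_0(u)\ne 0$; likewise there exists $\phi_1\in\Phi$ with $\phi_1(v)\ne 0$. Evaluating $\phi(u)\phi(v)=0$ at $\phi=\phi_0$ gives $\phi_0(v)=0$, and at $\phi=\phi_1$ gives $\phi_1(u)=0$, whence $\phi_0(u)\phi_1(v)+\phi_1(u)\phi_0(v)=\phi_0(u)\phi_1(v)\ne 0$, which is \eqref{hilbertpr.thm.eq5}. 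Combining the two implications with Theorem \ref{hilbertpr.thm} then yields the corollary. I do not expect a serious obstacle here; the only point needing care is the passage between condition \eqref{hilbertpr.thm.eq5} and the cleaner nonvanishing requirement on $u,v$, and that is exactly what the injectivity hypothesis \eqref{unitary.injective} provides.
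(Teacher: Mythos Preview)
Your proposal is correct and follows essentially the same route as the paper: the paragraph preceding the corollary argues exactly that \eqref{hilbertpr.thm.eq5} forces $u,v\ne 0$, and conversely uses injectivity \eqref{unitary.injective} together with \eqref{hilbertpr.thm.eq4} to pick $\phi_0,\phi_1$ with $\phi_0(u)\phi_1(v)\ne 0$, then invokes Theorem \ref{hilbertpr.thm}. Your added remarks (that Remark \ref{assump.remark} supplies Assumption \ref{unitary.assumption} when $\dim{\mathcal H}<\infty$, and that ${\mathcal U}(\R)=\{\pm 1\}$) make explicit what the paper leaves implicit, but the substance is identical.
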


For a unitary invariant set $\Phi$ of linear measurements satisfying Assumption  \ref{unitary.assumption}, the null space of the map
 \eqref{unitary.injective}    contains the zero element only.
  A strong version about the set $\Phi$  is its {\em complement property}
that given any $\tilde \Phi\subset \Phi$,
\begin{equation}
\label{complementary.def}
{\rm either  }\  N_{T_{\tilde\Phi}}=\{0\} \ {\rm  or} \
N_{T_{\Phi\backslash\tilde{\Phi}}}=\{0\}.
\end{equation}
In  the  scalar  setting,  i.e., ${\mathcal H}=\R$, an equivalent formulation of the complement property
\eqref{complementary.def} is that
there do not exist  nonzero functions $u, v\in {\mathcal S}$ such that
$ \phi(u) \phi(v)=0$ for all $\phi\in \Phi$. Therefore by Corollary \ref{scalepr.cor},
we have the following result, which is established in  \cite{RGrohs17, BBCE09, BCE06, CCD16} for frames in Hilbert/Banach space setting.

\begin{cor}\label{complement.cor2}
Let  ${\mathcal S}$ be a linear space of real functions
 on the domain $D$, and $\Phi$ be a  set of linear measurements  such that
 the
map $T_\Phi$ in  \eqref{unitary.injective} is injective.
Then
${\mathcal S}$ is  phase retrieval  if and only if
$\Phi$ has the complement property  \eqref{complementary.def}.
\end{cor}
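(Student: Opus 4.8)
The plan is to derive the corollary by combining Corollary \ref{scalepr.cor} with the standard scalar reformulation of the complement property \eqref{complementary.def}. The whole argument is essentially definition chasing once those two ingredients are in place.

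First I would unwind what ``${\mathcal S}$ is phase retrieval'' means: by definition it holds iff every $f\in{\mathcal S}$ is phase retrieval, and by Corollary \ref{scalepr.cor} (applicable since $T_\Phi$ is injective) the latter holds iff for no $f\in{\mathcal S}$ do there exist nonzero $u,v\in{\mathcal S}$ with $f=u+v$ and $\phi(u)\phi(v)=0$ for all $\phi\in\Phi$. I would then show this is equivalent to the single condition that there are no nonzero $u,v\in{\mathcal S}$ with $\phi(u)\phi(v)=0$ for all $\phi\in\Phi$. One implication is immediate; for the other, given such a pair $u,v$ I would take $f=u+v$, and the only thing needing a check is $f\neq 0$: if $u=-v$ then $\phi(u)^2=0$, hence $\phi(u)=0$ for every $\phi\in\Phi$, hence $u\in N_{T_\Phi}=\{0\}$ by the injectivity hypothesis \eqref{unitary.injective}, contradicting $u\neq 0$.

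Second, I would establish the equivalence between the complement property \eqref{complementary.def} and the nonexistence of such a pair $u,v$ (this is the reformulation already noted just before the statement). For one direction: if $\Phi$ fails the complement property, choose $\tilde\Phi\subset\Phi$ with $N_{T_{\tilde\Phi}}\neq\{0\}$ and $N_{T_{\Phi\backslash\tilde\Phi}}\neq\{0\}$, pick nonzero $u\in N_{T_{\tilde\Phi}}$ and nonzero $v\in N_{T_{\Phi\backslash\tilde\Phi}}$, and observe that $\phi(u)\phi(v)=0$ for every $\phi\in\Phi$ since each $\phi$ lies in $\tilde\Phi$ or in $\Phi\backslash\tilde\Phi$. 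For the converse: given nonzero $u,v$ with $\phi(u)\phi(v)=0$ for all $\phi$, set $\tilde\Phi=\{\phi\in\Phi:\phi(u)=0\}$; then $u\in N_{T_{\tilde\Phi}}$, while $\phi(v)=0$ for every $\phi\in\Phi\backslash\tilde\Phi$ so $v\in N_{T_{\Phi\backslash\tilde\Phi}}$, and both null spaces are nontrivial, i.e.\ the complement property fails. Chaining the two equivalences yields the corollary.

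I do not expect a genuine obstacle here, since the statement is a repackaging of Corollary \ref{scalepr.cor}. The only point requiring a little care is the passage from ``every $f\in{\mathcal S}$ is phase retrieval'' to ``no bad pair $(u,v)$ exists,'' where one must exclude the degenerate case $u+v=0$ using injectivity of $T_\Phi$; the rest is routine.
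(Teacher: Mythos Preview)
Your proposal is correct and follows essentially the same route as the paper: the paper derives the corollary by combining Corollary \ref{scalepr.cor} with the reformulation of the complement property \eqref{complementary.def} as ``there do not exist nonzero $u,v\in{\mathcal S}$ with $\phi(u)\phi(v)=0$ for all $\phi\in\Phi$,'' stated in the text just before the corollary. You simply spell out the details the paper leaves implicit (the two directions of that reformulation and the passage from the pointwise statement of Corollary \ref{scalepr.cor} to the global one); your extra check that $u+v\ne 0$ is harmless though not strictly needed, since Corollary \ref{scalepr.cor} applied to $f=0$ already rules out that case.
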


\subsection{Phase retrieval of projections}\label{projective.subsection}
Let   $\tilde {\mathcal H}$ be a linear subspace of the  Hilbert space ${\mathcal{H}}$,
and $P_{\tilde {\mathcal H}}$ be
 the projection from ${\mathcal{H}}$ onto ${\tilde {\mathcal H}}$.
In this section, we establish the following result on phase retrieval of
the projection of 
vector-valued functions onto $\tilde {\mathcal H}$,
 see Section \ref{subspacePR.thm.pfsection} for the proof.

\begin{thm}\label{subspacePR.thm}
Let ${\mathcal{H}}$ and $\tilde {\mathcal H}$ be a real separable Hilbert space and  its linear subspace respectively,
 ${\mathcal S}$ be a unitary invariant space of ${\mathcal H}$-valued functions
 on the domain $D$, and  let $\Phi$ be a unitary invariant set of linear measurements satisfying Assumption  \ref{unitary.assumption}.
   If $f\in {\mathcal S}$ is phase retrieval in ${\mathcal S}$, then its projection  $P_{\tilde {\mathcal H}} f$
onto ${\tilde {\mathcal H}} $ is  phase  retrieval in $P_{\tilde {\mathcal H}} {\mathcal S}$, i.e.,
\begin{equation}\label{subspacePR.cor.eq1}
{\mathcal M}_{P_{\tilde {\mathcal H}} f, \Phi}=\big\{\tilde U P_{\tilde {\mathcal H}} f,   \
 \tilde U \in {\mathcal U}({\tilde {\mathcal H}})\big\}.
\end{equation}
\vskip .1in

\end{thm}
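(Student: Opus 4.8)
Write $P=P_{\tilde{\mathcal H}}$ and $R=2P-I$, so $R\in{\mathcal U}({\mathcal H})$ is the reflection fixing $\tilde{\mathcal H}$ and $P=\tfrac12(I+R)$. The plan is to prove the two inclusions in \eqref{subspacePR.cor.eq1} separately. The easy one: from $P=\tfrac12(I+R)$, the linearity of each $\phi\in\Phi$, and the covariance \eqref{PhiInvariance.def}, I would first record that $\phi(Pg)=P\phi(g)$ and $\phi((I-P)g)=(I-P)\phi(g)$ for all $g\in{\mathcal S}$ and $\phi\in\Phi$, and similarly that $Pg=\tfrac12(g+Rg)\in{\mathcal S}$, so $P{\mathcal S}\subseteq{\mathcal S}$. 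Extending $\tilde U\in{\mathcal U}(\tilde{\mathcal H})$ to $\tilde U\oplus I\in{\mathcal U}({\mathcal H})$ then shows that $P{\mathcal S}$ is ${\mathcal U}(\tilde{\mathcal H})$-invariant and that $\Phi$ is ${\mathcal U}(\tilde{\mathcal H})$-covariant on $P{\mathcal S}$, which as in \eqref{vector.mf.eq} yields ${\mathcal M}_{P_{\tilde{\mathcal H}}f,\Phi}\supseteq\{\tilde U P_{\tilde{\mathcal H}}f:\tilde U\in{\mathcal U}(\tilde{\mathcal H})\}$.

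For the reverse inclusion I would take $\tilde g\in{\mathcal M}_{P_{\tilde{\mathcal H}}f,\Phi}$, write $\tilde g=Pg_0$ with $g_0\in{\mathcal S}$, and lift it to $g:=\tilde g+(I-P)f\in{\mathcal S}$. Since $Pg=\tilde g$, $(I-P)g=(I-P)f$, and $\phi(Pg)=P\phi(g)$, the Pythagorean identity gives $\|\phi(g)\|^2=\|\phi(\tilde g)\|^2+\|(I-P)\phi(f)\|^2=\|\phi(Pf)\|^2+\|(I-P)\phi(f)\|^2=\|\phi(f)\|^2$ for every $\phi\in\Phi$, so $g\in{\mathcal M}_{f,\Phi}$. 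As $f$ is phase retrieval in ${\mathcal S}$, there is $U\in{\mathcal U}({\mathcal H})$ with $g=Uf$; by construction $(I-P)Uf=(I-P)f$, i.e. $(U-I)f(x)\in\tilde{\mathcal H}$ for all $x\in D$, and $\tilde g=Pg=PUf$.

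Next I would split $\langle Uf(x),Uf(y)\rangle$ and $\langle f(x),f(y)\rangle$ into their $\tilde{\mathcal H}$- and $\tilde{\mathcal H}^\perp$-components; using $(I-P)Uf=(I-P)f$ and $\langle Uf(x),Uf(y)\rangle=\langle f(x),f(y)\rangle$ (because $U$ is unitary), the $\tilde{\mathcal H}^\perp$-parts cancel and I obtain $\langle PUf(x),PUf(y)\rangle=\langle Pf(x),Pf(y)\rangle$ for all $x,y\in D$. Hence the value sets $\{Pf(x)\}_{x\in D}$ and $\{\tilde g(x)\}_{x\in D}=\{PUf(x)\}_{x\in D}$ share the same Gram structure, so the assignment $Pf(x)\mapsto\tilde g(x)$ extends to a surjective linear isometry $\Theta$ from $W:=\overline{\mathrm{span}}\{Pf(x):x\in D\}$ onto $W':=\overline{\mathrm{span}}\{\tilde g(x):x\in D\}$, which are closed subspaces of $\tilde{\mathcal H}$.

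The final and hardest step is to extend $\Theta$ to a unitary $\tilde U\in{\mathcal U}(\tilde{\mathcal H})$ (then $\tilde g=\tilde U Pf$ completes the proof), which amounts to verifying that $\tilde{\mathcal H}\ominus W$ and $\tilde{\mathcal H}\ominus W'$ have the same Hilbert dimension. In finite dimensions this is immediate from $\dim W=\dim W'$, and in that case Remark \ref{assump.remark} could alternatively be invoked. The infinite-dimensional case is where I expect the real work: here I would exploit the phase-retrieval hypothesis on $f$ once more — the relation $(U-I)f(x)\in\tilde{\mathcal H}$ already forces $Uy=y$ for every $y\in\tilde{\mathcal H}^\perp$ in $\overline{\mathrm{span}}\{f(x):x\in D\}$ by a short Pythagorean argument, and, since ${\mathcal M}_{f,\Phi}$ is exactly the ${\mathcal U}({\mathcal H})$-orbit of $f$, one should be able to replace $U$ by a unitary of ${\mathcal H}$ that still sends each $f(x)$ to $Uf(x)$ but fixes $\tilde{\mathcal H}^\perp$ pointwise, whose restriction to $\tilde{\mathcal H}$ is the desired $\tilde U$. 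Apart from this last step, the argument is just bookkeeping with the reflection identity $P=\tfrac12(I+R)$ and the Pythagorean theorem.
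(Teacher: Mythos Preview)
Your approach is genuinely different from the paper's. You argue directly: take $\tilde g\in{\mathcal M}_{Pf,\Phi}$, lift it to $g=\tilde g+(I-P)f$, use phase retrieval of $f$ to get $g=Uf$, and then try to manufacture $\tilde U\in{\mathcal U}(\tilde{\mathcal H})$ from $U$. The paper instead argues by contrapositive through the characterization Theorem~\ref{hilbertpr.thm}: if $Pf$ were not phase retrieval, Theorem~\ref{hilbertpr.thm} applied to $P_{\tilde{\mathcal H}}{\mathcal S}$ supplies $u,v\in P_{\tilde{\mathcal H}}{\mathcal S}$ satisfying \eqref{hilbertpr.thm.eq3}--\eqref{hilbertpr.thm.eq5} for $Pf$, and the paper lifts via $\tilde u=(I-P)f+u$, $\tilde v=v$. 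Because $\phi(\tilde v)\in\tilde{\mathcal H}$ while the added piece $(I-P)\phi(f)$ lives in $\tilde{\mathcal H}^\perp$, the three conditions transfer to $f$ by pure orthogonality, contradicting Theorem~\ref{hilbertpr.thm} for $f$. No unitary on $\tilde{\mathcal H}$ ever needs to be built.

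Your direct route forces you to construct $\tilde U$, and the gap you flag is real: equal Gram data gives an isometry $\Theta:W\to W'$, but promoting it to a unitary on $\tilde{\mathcal H}$ needs $\dim(\tilde{\mathcal H}\ominus W)=\dim(\tilde{\mathcal H}\ominus W')$, which is not automatic in infinite dimensions (think of the unilateral shift). Your Pythagorean observation that $U$ already fixes $\tilde{\mathcal H}^\perp\cap\overline{\mathrm{span}}\{f(x)\}$ is correct, and one can indeed define an isometry $V$ on $\overline{Z+\tilde{\mathcal H}^\perp}$ agreeing with $U$ on $Z$ and with $I$ on $\tilde{\mathcal H}^\perp$; but extending $V$ to a unitary on all of ${\mathcal H}$ again needs a codimension match you have not established, so the sketch is not yet a proof. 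The paper sidesteps this by never leaving the inner-product conditions \eqref{hilbertpr.thm.eq3}--\eqref{hilbertpr.thm.eq5}.

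One caveat worth noting for completeness: the paper's invocation of Theorem~\ref{hilbertpr.thm} on $P_{\tilde{\mathcal H}}{\mathcal S}$ uses the ``sufficiency'' direction, which in turn uses Assumption~\ref{unitary.assumption} for the pair $(P_{\tilde{\mathcal H}}{\mathcal S},\tilde{\mathcal H})$. The paper does not separately verify that Assumption~\ref{unitary.assumption} descends from $({\mathcal S},{\mathcal H})$ to $(P_{\tilde{\mathcal H}}{\mathcal S},\tilde{\mathcal H})$; that descent is exactly equivalent to the extension problem you isolated. So your gap and the paper's tacit hypothesis are two faces of the same issue --- but the paper's packaging via Theorem~\ref{hilbertpr.thm} is still the cleaner way to organize the argument, since it makes transparent that only inner products (not an explicit $\tilde U$) are needed for the lift.
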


For  a nonzero vector  ${\bf e}\in {\mathcal H}$, set
$ {\mathcal S}_{\bf e}=\{\langle f,  {\bf e}\rangle, \ f\in {\mathcal S}\}$.
Then ${\mathcal S}_{\bf e}$ is a linear space of real functions on the domain $D$. Applying Corollary \ref{scalepr.cor}  and
Theorem \ref{subspacePR.thm} with $\tilde {\mathcal H}$ replaced by the one-dimensional space spanned by ${\bf e}$, we have
the following corollary about phase  retrieval of real functions in the linear space ${\mathcal S}_{\bf e}$.

\begin{corollary}
Let ${\mathcal{H}},
{\mathcal S}, \Phi$ be as in Theorem  \ref{subspacePR.thm}. If
  $f\in {\mathcal S}$ is phase retrieval in ${\mathcal S}$, then
 for any ${\bf e}\in {\mathcal H}$,  the real function
 $\langle f, {\bf e}\rangle$  is phase  retrieval in ${\mathcal S}_{\bf e}$, or equivalently there do not exist $g, h\in {\mathcal S}\setminus (\!\span\{\bf e\})^\bot$ such that
 \begin{equation}
 \langle f, {\bf e}\rangle=\langle g+h, {\bf e}\rangle\ {\rm and} \ \langle \phi(g), {\bf e}\rangle \langle \phi(h), {\bf e}\rangle=0 \ {\rm for \ all} \ \phi\in \Phi.
 \end{equation}
\end{corollary}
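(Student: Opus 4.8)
The plan is to obtain this corollary by specializing Theorem~\ref{subspacePR.thm} to the one-dimensional subspace $\tilde{\mathcal H}=\span\{\bf e\}$ and then rephrasing the resulting statement about $P_{\tilde{\mathcal H}}{\mathcal S}$ in terms of the real function space ${\mathcal S}_{\bf e}$ by means of Corollary~\ref{scalepr.cor}. The case ${\bf e}=0$ is trivial, and since the assertion is invariant under rescaling ${\bf e}$ I may assume $\|{\bf e}\|=1$. Then $P_{\tilde{\mathcal H}}h=\langle h,{\bf e}\rangle{\bf e}$ for $h\in{\mathcal H}$, the group ${\mathcal U}(\tilde{\mathcal H})$ consists of $\pm I_{\tilde{\mathcal H}}$ only, and the map $\iota\colon P_{\tilde{\mathcal H}}{\mathcal S}\to{\mathcal S}_{\bf e}$ that sends $P_{\tilde{\mathcal H}}g$ to the real function $\langle g,{\bf e}\rangle$ is a linear bijection carrying $\{\tilde UP_{\tilde{\mathcal H}}f\colon\tilde U\in{\mathcal U}(\tilde{\mathcal H})\}$ onto $\{\pm\langle f,{\bf e}\rangle\}$.

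First I would do the bookkeeping that matches the phaseless measurements on the two spaces. Let $U\in{\mathcal U}({\mathcal H})$ be the reflection $Uh=h-2\langle h,{\bf e}\rangle{\bf e}$. If $g\in{\mathcal S}$ has $P_{\tilde{\mathcal H}}g=0$, then $Ug=g$, so by \eqref{PhiInvariance.def} we have $\phi(g)=\phi(Ug)=U\phi(g)$ and hence $P_{\tilde{\mathcal H}}\phi(g)=0$ for every $\phi\in\Phi$; by linearity this shows $P_{\tilde{\mathcal H}}\phi(g)$ depends only on $P_{\tilde{\mathcal H}}g$, so each $\phi\in\Phi$ induces a well-defined map $\phi_{\bf e}\colon{\mathcal S}_{\bf e}\to\R$ sending $\langle g,{\bf e}\rangle$ to $\langle\phi(g),{\bf e}\rangle$, and the measurements occurring in Theorem~\ref{subspacePR.thm} satisfy $\|\phi(P_{\tilde{\mathcal H}}g)\|=\|P_{\tilde{\mathcal H}}\phi(g)\|=|\langle\phi(g),{\bf e}\rangle|=|\phi_{\bf e}(\iota(P_{\tilde{\mathcal H}}g))|$. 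Consequently $\iota$ transports ${\mathcal M}_{P_{\tilde{\mathcal H}}f,\Phi}$ onto ${\mathcal M}_{\langle f,{\bf e}\rangle,\Phi_{\bf e}}$, where $\Phi_{\bf e}=\{\phi_{\bf e}\colon\phi\in\Phi\}$. Applying Theorem~\ref{subspacePR.thm} with $\tilde{\mathcal H}=\span\{\bf e\}$ and pushing \eqref{subspacePR.cor.eq1} through $\iota$ then yields ${\mathcal M}_{\langle f,{\bf e}\rangle,\Phi_{\bf e}}=\{\pm\langle f,{\bf e}\rangle\}$, that is, $\langle f,{\bf e}\rangle$ is phase retrieval in ${\mathcal S}_{\bf e}$.

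To obtain the equivalent reformulation I would apply Corollary~\ref{scalepr.cor} to the pair $({\mathcal S}_{\bf e},\Phi_{\bf e})$. Its hypothesis that $T_{\Phi_{\bf e}}$ is injective follows from the same reflection: if $\langle\phi(g),{\bf e}\rangle=0$ for all $\phi\in\Phi$, then $\phi(Ug)=U\phi(g)=\phi(g)$ for all $\phi$, so $Ug=g$ by the injectivity of $T_\Phi$ on ${\mathcal S}$, which is guaranteed by Assumption~\ref{unitary.assumption}, whence $\langle g,{\bf e}\rangle\equiv0$. Corollary~\ref{scalepr.cor} then gives that $\langle f,{\bf e}\rangle$ is phase retrieval in ${\mathcal S}_{\bf e}$ if and only if there are no nonzero $\tilde u,\tilde v\in{\mathcal S}_{\bf e}$ with $\langle f,{\bf e}\rangle=\tilde u+\tilde v$ and $\phi_{\bf e}(\tilde u)\phi_{\bf e}(\tilde v)=0$ for all $\phi\in\Phi$; writing $\tilde u=\langle g,{\bf e}\rangle$ and $\tilde v=\langle h,{\bf e}\rangle$ with $g,h\in{\mathcal S}$, the condition $\tilde u\ne0$ is exactly $g\notin(\span\{\bf e\})^\bot$, while $\tilde u+\tilde v=\langle g+h,{\bf e}\rangle$ and $\phi_{\bf e}(\tilde u)\phi_{\bf e}(\tilde v)=\langle\phi(g),{\bf e}\rangle\langle\phi(h),{\bf e}\rangle$, which is exactly the displayed reformulation. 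I do not expect a genuine obstacle here, since this is a corollary; the only points needing care are the two pieces of bookkeeping above — identifying $\|\phi(\cdot)\|$ on $P_{\tilde{\mathcal H}}{\mathcal S}$ with $|\phi_{\bf e}(\cdot)|$ on ${\mathcal S}_{\bf e}$ (which rests on $P_{\tilde{\mathcal H}}\phi(g)$ depending only on $P_{\tilde{\mathcal H}}g$) and checking the injectivity hypothesis of Corollary~\ref{scalepr.cor} — and both come from applying unitary invariance to the single reflection $U$.
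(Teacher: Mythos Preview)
Your proposal is correct and follows precisely the approach the paper indicates: apply Theorem~\ref{subspacePR.thm} with $\tilde{\mathcal H}=\span\{\bf e\}$ and then invoke Corollary~\ref{scalepr.cor} to pass to the scalar space ${\mathcal S}_{\bf e}$. The paper itself gives no further details beyond naming these two ingredients, so your careful verification of the identification $\|\phi(P_{\tilde{\mathcal H}}g)\|=|\phi_{\bf e}(\langle g,{\bf e}\rangle)|$ via the reflection $I-2P_{\tilde{\mathcal H}}$ and of the injectivity hypothesis for $T_{\Phi_{\bf e}}$ simply fills in what the paper leaves implicit.
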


\smallskip

\subsection{Phase retrieval of vector fields on  graphs}\label{vector.subsection}
 Let ${\mathcal G}=(V, E)$ be a simple graph and ${\bf f}=({\bf f}_i)_{i\in V}$ be a $d$-dimensional vector field on ${\mathcal G}$, where
${\bf f}_i\in \Rd, i\in V$. First we consider the case that  ${\mathcal G}$ is  a complete graph. In this case, we obtain from \eqref{vectorfield.thm.pfeq1} and \eqref{vectorfield.thm.pfeq2} that
\begin{equation}\label{completegraph.eq00}
\langle {\bf g}_i, {\bf g}_j\rangle= \langle {\bf f}_i, {\bf f}_j\rangle\ {\rm \ for\ all} \ i, j\in V.
\end{equation}
Applying \eqref{completegraph.eq00} and  following the argument used in Remark \ref{assump.remark}, we can find  an orthogonal matrix $U$ of size $d\times d$ such that
$${\bf g}_i= U {\bf f}_i, \  i\in V.$$
This leads to the following result.

\begin{thm}\label{vectorfieldcompletegraph.thm}
Let ${\mathcal G}=(V, E)$ be a complete graph. Then
 a vector field  ${\bf f}=({\bf f}_i)_{i\in V}$ can be reconstructed,  up to an  orthogonal matrix,  from its  absolute magnitudes
$\|{\bf f}_i\|, i\in V$ and relative magnitudes $\|{\bf f}_i-{\bf f}_j\|, i,j\in V$.
\end{thm}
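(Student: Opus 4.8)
The plan is to convert the phaseless measurements into inner-product data and then mimic the Gram--Schmidt construction of Remark \ref{assump.remark}. Fix a vector field ${\bf g}=({\bf g}_i)_{i\in V}$ satisfying \eqref{vectorfield.thm.pfeq1} and \eqref{vectorfield.thm.pfeq2}. First I would apply the polarization identity
\[
\langle {\bf f}_i,{\bf f}_j\rangle=\tfrac12\big(\|{\bf f}_i\|^2+\|{\bf f}_j\|^2-\|{\bf f}_i-{\bf f}_j\|^2\big),
\]
and the same identity for ${\bf g}$; since $\mathcal G$ is complete, the relative magnitude $\|{\bf f}_i-{\bf f}_j\|$ is prescribed for \emph{every} pair $i,j\in V$, so \eqref{vectorfield.thm.pfeq1} and \eqref{vectorfield.thm.pfeq2} yield \eqref{completegraph.eq00}, i.e.\ the Gram matrices of $({\bf f}_i)_{i\in V}$ and $({\bf g}_i)_{i\in V}$ coincide. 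This is the only place completeness of $\mathcal G$ is used.

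Second, I would construct the orthogonal matrix. Pick a maximal linearly independent subfamily ${\bf f}_{i_1},\dots,{\bf f}_{i_N}$ of $\{{\bf f}_i\}_{i\in V}$ — necessarily finite with $N\le d$ — which is then a basis of $W_{\bf f}:=\span\{{\bf f}_i,\ i\in V\}$. Running Gram--Schmidt exactly as in Remark \ref{assump.remark} (cf.\ \eqref{orthogonal.lem.pf.eq3++}) produces an orthonormal basis $e_m=\sum_{n=1}^m a(m,n){\bf f}_{i_n}$ of $W_{\bf f}$, where the coefficients $a(m,n)$ depend only on the inner products $\langle {\bf f}_{i_k},{\bf f}_{i_l}\rangle$. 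Setting $\tilde e_m=\sum_{n=1}^m a(m,n){\bf g}_{i_n}$ and invoking \eqref{completegraph.eq00}, the family $\{\tilde e_m\}_{m=1}^N$ has the identity as its Gram matrix, hence is an orthonormal basis of $W_{\bf g}:=\span\{{\bf g}_i,\ i\in V\}$, and in particular $\dim W_{\bf g}=N$. Define an orthogonal matrix $U$ of size $d\times d$ by $Ue_n=\tilde e_n$ for $1\le n\le N$ and by sending a fixed orthonormal basis of $W_{\bf f}^\perp$ to one of $W_{\bf g}^\perp$.

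Third, I would verify ${\bf g}_i=U{\bf f}_i$ for each $i\in V$. Writing ${\bf f}_i=\sum_{k=1}^N\lambda_k{\bf f}_{i_k}$ (possible since the ${\bf f}_{i_k}$ span $W_{\bf f}$), the computation $\|{\bf g}_i-\sum_k\lambda_k{\bf g}_{i_k}\|^2=\|{\bf f}_i-\sum_k\lambda_k{\bf f}_{i_k}\|^2=0$, where the first equality expands both norms and applies \eqref{completegraph.eq00} termwise, shows ${\bf g}_i=\sum_k\lambda_k{\bf g}_{i_k}\in W_{\bf g}$. Since the coordinate $\langle {\bf f}_i,e_m\rangle$ is a fixed linear combination of the $\langle {\bf f}_i,{\bf f}_{i_n}\rangle$, \eqref{completegraph.eq00} gives $\langle {\bf g}_i,\tilde e_m\rangle=\langle {\bf f}_i,e_m\rangle$ for all $m$, whence
\[
U{\bf f}_i=\sum_{m=1}^N\langle {\bf f}_i,e_m\rangle\,\tilde e_m=\sum_{m=1}^N\langle {\bf g}_i,\tilde e_m\rangle\,\tilde e_m={\bf g}_i,
\]
the last equality because ${\bf g}_i\in W_{\bf g}=\span\{\tilde e_1,\dots,\tilde e_N\}$. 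Thus ${\bf g}=U{\bf f}$, which is the assertion.

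The argument carries no genuine analytic difficulty; the point needing care — and the thing I would be most careful to spell out — is the possible rank deficiency of the configuration: $W_{\bf f}$ and $W_{\bf g}$ may be proper subspaces of $\Rd$, so the linear map must be built on a maximal independent subfamily and extended by an \emph{arbitrary} orthogonal identification of the orthogonal complements, and one has to check, via the equality of Gram matrices, both that $\dim W_{\bf g}=\dim W_{\bf f}$ and that the resulting $U$ sends every ${\bf f}_i$ to the corresponding ${\bf g}_i$, not merely the chosen basis vectors.
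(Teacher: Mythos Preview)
Your proof is correct and follows essentially the same approach as the paper: the paper derives \eqref{completegraph.eq00} via polarization and then simply invokes ``the argument used in Remark \ref{assump.remark}'' to produce the orthogonal matrix $U$. You have spelled out that argument in full, including the rank-deficient case, which the paper leaves implicit.
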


From the above theorem, we may conclude that a velocity field on a complete graph is determined, up to an orthogonal matrix,
 from its absolute speed  at vertices and relative speed  between vertices.
We remark that the conclusion in  Theorem \ref{vectorfieldcompletegraph.thm} does not hold for an arbitrary graph.
Let $C_n$ be the circulant graph  with
 nodes labeled $0,1,\ldots ,n-1$  and  each node $i$  adjacent to nodes $i\pm 1 \mod n$.
 One may verify that the vector fields ${\bf h}=({\bf h}_i)_{0\le i\le n-1}$ on the circulant graph  $C_n$ of even order
 have the same  magnitudes $\|{\bf h}_i\|=1$ on all vertices $0\le i\le n-1$  and relative magnitudes
 $\|{\bf h}_i-{\bf h}_{i\pm 1}\|=\sqrt{2}$
at all neighboring vertices, where
 \begin{equation*}\label{counterex.vec}
 {\bf h}_{i}=\left\{\begin{array}{ll}  \pm (1, 0)^T  & {\rm if} \ i \ {\rm is \ even} \\
 \pm (0, 1)^T   & {\rm if } \  i \ {\rm is \ odd}.
 \end{array}
 \right.
 \end{equation*}

For a simple graph ${\mathcal G}=(V, E)$  and a $d$-dimensional vector field ${\bf f}=({\bf f}_i)_{i\in V}$ on ${\mathcal G}$,
we define  
\begin{equation}\triangle({\bf f}, {\mathcal G}_c)=\Big
\{\sum_{i\in V_c}t_i{\bf f}_i, \ \sum_{i\in V_c} t_i=1\ {\rm and}\ 0\le t_i\le 1\ {\rm for \ all}\   i\in V_c\Big\},
\end{equation}
where
${\mathcal G}_c=(V_c, E_c)$ is a complete subgraph  of order $d+1$.
 Let $V_{\bf f}$ be the set of all complete subgraphs
 ${\mathcal G}_c$
 of order $d+1$ such that $\triangle({\bf f}, {\mathcal G}_c)$ is a  $d$-simplex in $\Rd$, and  $E_{\bf f}$ be
 the set of all pairs  of
 complete subgraphs   ${\mathcal G}_c, \tilde {\mathcal G}_c \in V_{\bf f}$
 of order $d+1$  such that
 $\triangle({\bf f}, {\mathcal G}_c\cap  \tilde {\mathcal G}_c)$ is a $(d-1)$-simplex
 and  the hyperplane containing $\triangle({\bf f}, {\mathcal G}_c\cap  \tilde {\mathcal G}_c)$ does not include the origin.
We call the graph ${\mathcal G}_{\bf f}=(V_{\bf f}, E_{\bf f})$
 with the vertex  set $V_{\bf f}$ and  edge set $ E_{\bf f}$  defined above as
the {\em $d$-simplex graph} associated with the vector field ${\bf f}$, see Figure \ref{simplexgraph.fig}.

\begin{figure}[h]
\begin{center}
\includegraphics[width=118mm, height=52mm]{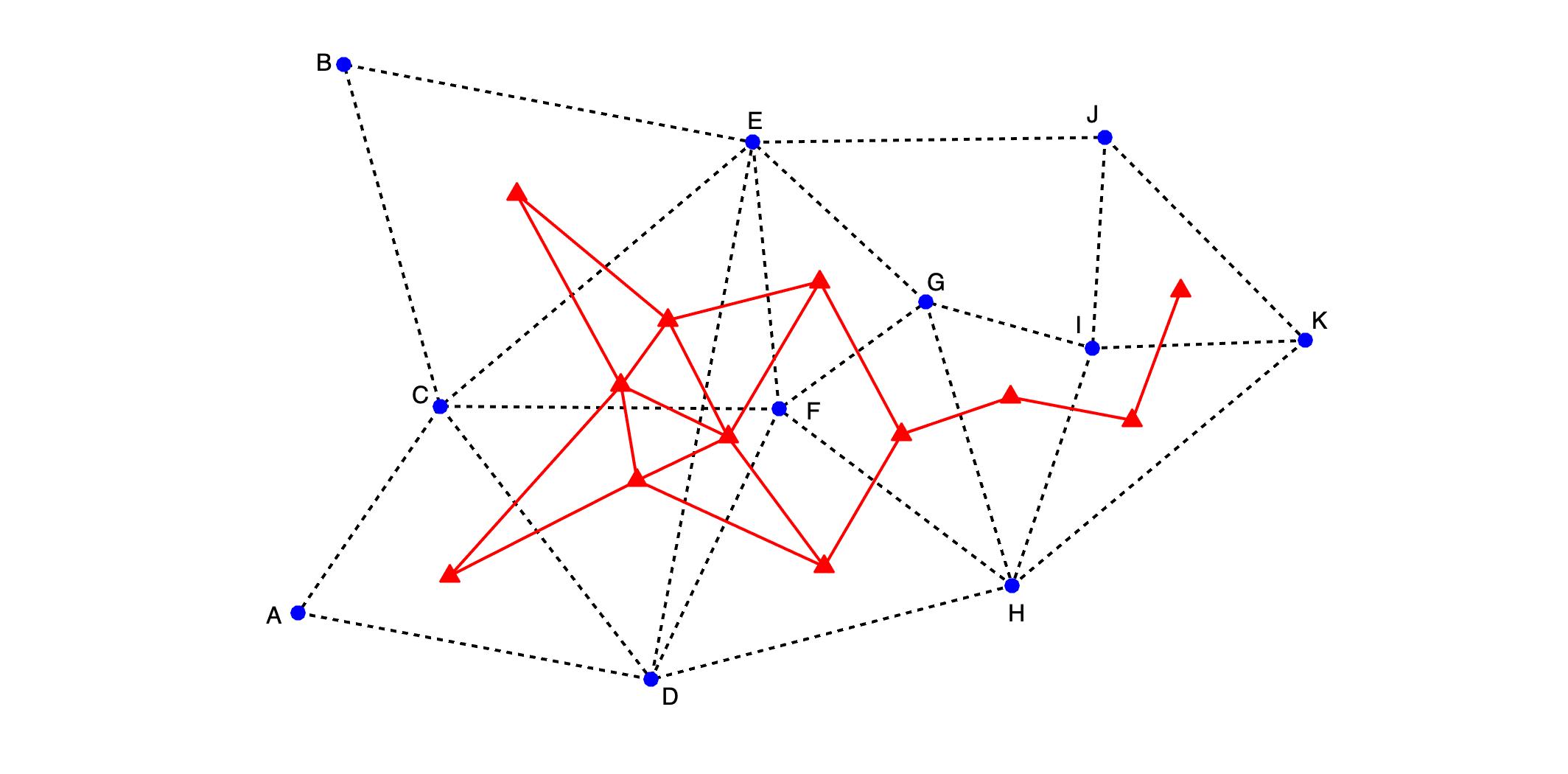}
\caption{
 Plotted are  the  graph ${\mathcal G}$ with vertices marked by  blue dots from A to K and edges in black dashed  lines,
and the $2$-simplex graph ${\mathcal G}_{\bf f}$ with vertices  $\{\triangle ACD, \triangle BCE, \triangle CDE, \triangle CDF,
\triangle CEF, \triangle DEF, \triangle DFH,$
$\triangle EFG, \triangle FGH, \triangle GHI, \triangle HIK, \triangle IJK\}$ marked  by the red filled triangle located around the centroid
 and edges between  2-simplices  in red   solid lines, where we assume that the hyperplane containing two vectors of the vector field ${\bf f}$
 located at edges of the graph ${\mathcal G}$, except
 at the edge between vertices $C$ and $F$, does not pass through the origin.
}\label{simplexgraph.fig}
\end{center}
\end{figure}

  In the following theorem, see  Section \ref{vectorfield.thm.section} for the proof,  we show that
a   vector  field  ${\bf f}$ on a simple graph  ${\mathcal G}$ is determined, up to  an orthogonal matrix,
 from its absolute magnitudes at vertices and relative  magnitudes between  neighboring vertices if
 the $d$-simplex graph  ${\mathcal G}_{\bf f}$ is connected.

\begin{thm}\label{vectorfield.thm}
Let ${\mathcal G}=(V, E)$ be a simple finite graph, ${\bf f}=({\bf f}_i)_{i\in V}$ be a vector field on the graph   ${\mathcal G}$
with its
$d$-simplex graph denoted by  ${\mathcal G}_{\bf f}$.
If the $d$-simplex graph ${\mathcal G}_{\bf f}$ is connected and  for any vertex $i\in V$ there exists a
complete subgraph
 ${\mathcal G}_c=(V_c, E_c)$
 of order $d+1$ such that   $i\in V_c$ and $\triangle({\bf f}, {\mathcal G}_c)$ is a $d$-simplex,
  then  the vector field ${\bf f}$ is  determined, up to an orthogonal matrix,
 from its  absolute magnitudes $\|{\bf f}_i\|, i\in V$ at vertices and relative magnitudes $\|{\bf f}_i-{\bf f}_j\|, (i,j)\in E$ of neighboring vertices.
\end{thm}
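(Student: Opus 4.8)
The plan is to reconstruct the vector field ${\bf f}$ locally on each vertex of the $d$-simplex graph ${\mathcal G}_{\bf f}$, then glue the local reconstructions together along the edges of ${\mathcal G}_{\bf f}$ using the connectedness hypothesis, and finally transfer the conclusion back to all vertices of ${\mathcal G}$ via the last hypothesis. Let ${\bf g}=({\bf g}_i)_{i\in V}$ be another vector field satisfying \eqref{vectorfield.thm.pfeq1} and \eqref{vectorfield.thm.pfeq2}. As in the complete-graph case, from $\|{\bf g}_i\|=\|{\bf f}_i\|$ and $\|{\bf g}_i-{\bf g}_j\|=\|{\bf f}_i-{\bf f}_j\|$ one gets $\langle {\bf g}_i, {\bf g}_j\rangle=\langle {\bf f}_i, {\bf f}_j\rangle$ for every edge $(i,j)\in E$, and hence for every pair $i,j$ lying in a common complete subgraph, in particular for all $i,j\in V_c$ whenever ${\mathcal G}_c=(V_c,E_c)\in V_{\bf f}$.

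First I would treat a single vertex of the simplex graph: fix ${\mathcal G}_c=(V_c,E_c)\in V_{\bf f}$, so $\triangle({\bf f},{\mathcal G}_c)$ is a genuine $d$-simplex, which forces the $d+1$ vectors $\{{\bf f}_i\}_{i\in V_c}$ to affinely span $\Rd$, equivalently the differences ${\bf f}_i-{\bf f}_{i_0}$ ($i\in V_c\setminus\{i_0\}$) form a basis of $\Rd$. Since $\langle{\bf g}_i,{\bf g}_j\rangle=\langle{\bf f}_i,{\bf f}_j\rangle$ on $V_c$, the Gram matrices of $\{{\bf g}_i\}_{i\in V_c}$ and $\{{\bf f}_i\}_{i\in V_c}$ coincide, so by the argument of Remark \ref{assump.remark} (Gram--Schmidt on a spanning set of a finite-dimensional space) there is an orthogonal matrix $U_{{\mathcal G}_c}\in O(d)$ with ${\bf g}_i=U_{{\mathcal G}_c}{\bf f}_i$ for all $i\in V_c$; moreover $U_{{\mathcal G}_c}$ is \emph{uniquely} determined because the ${\bf f}_i$, $i\in V_c$, span $\Rd$.

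Next I would show that the local orthogonal matrices agree along edges of ${\mathcal G}_{\bf f}$. If $({\mathcal G}_c,\tilde{\mathcal G}_c)\in E_{\bf f}$, then by definition $\triangle({\bf f},{\mathcal G}_c\cap\tilde{\mathcal G}_c)$ is a $(d-1)$-simplex whose affine hull $H$ misses the origin; thus the $d$ vectors $\{{\bf f}_i\}_{i\in V_c\cap V_{\tilde c}}$ are affinely independent and, since $0\notin H$, they are in fact \emph{linearly} independent, hence a basis of $\Rd$. On these $d$ common vectors both $U_{{\mathcal G}_c}$ and $U_{\tilde{\mathcal G}_c}$ send ${\bf f}_i$ to ${\bf g}_i$; as they agree on a basis, $U_{{\mathcal G}_c}=U_{\tilde{\mathcal G}_c}$. (This step is where the non-degeneracy conditions built into the definition of ${\mathcal G}_{\bf f}$ are essential, and I expect verifying the linear-independence consequence of "the hyperplane does not contain the origin" to be the main obstacle — everything else is bookkeeping.) By connectedness of ${\mathcal G}_{\bf f}$, all $U_{{\mathcal G}_c}$, ${\mathcal G}_c\in V_{\bf f}$, equal a single $U\in O(d)$, so ${\bf g}_i=U{\bf f}_i$ for every $i$ that belongs to some ${\mathcal G}_c\in V_{\bf f}$.

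Finally, the extra hypothesis says that for \emph{every} vertex $i\in V$ there is a complete subgraph ${\mathcal G}_c=(V_c,E_c)$ of order $d+1$ with $i\in V_c$ and $\triangle({\bf f},{\mathcal G}_c)$ a $d$-simplex, i.e.\ ${\mathcal G}_c\in V_{\bf f}$; hence every $i\in V$ is covered by the previous paragraph and ${\bf g}_i=U{\bf f}_i$ for all $i\in V$, i.e.\ ${\bf g}=U{\bf f}$. This proves that ${\bf f}$ is determined up to an orthogonal matrix from $\{\|{\bf f}_i\|\}_{i\in V}$ and $\{\|{\bf f}_i-{\bf f}_j\|\}_{(i,j)\in E}$, completing the proof. \qed
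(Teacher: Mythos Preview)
Your proof is correct and follows essentially the same route as the paper: establish a local orthogonal equivalence on each $d$-simplex, use the linear independence of the $d$ shared vectors at an edge of ${\mathcal G}_{\bf f}$ (exactly the ``hyperplane misses the origin'' condition) to make adjacent local maps agree, and then invoke connectedness plus the covering hypothesis. The only organizational difference is that the paper fixes one orthogonal $U$ at the start, replaces ${\bf g}$ by $U^T{\bf g}$, and then propagates the identity $\tilde{\bf g}_i={\bf f}_i$ along edges via the inner-product computation $\langle\tilde{\bf g}_k,{\bf f}_i\rangle=\langle{\bf f}_k,{\bf f}_i\rangle$, whereas you keep a family $\{U_{{\mathcal G}_c}\}$ and show they all coincide; the paper also isolates your local step as a separate lemma (its Lemma \ref{simplex.lem}) rather than citing Remark \ref{assump.remark}.
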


The $d$-simplex graph ${\mathcal G}_{\bf f}$ is defined when the vector field ${\bf f}$ is given.
In the following remark, we show that
the $d$-simplex graph ${\mathcal G}_{\bf f}$
can also be constructed from the available  absolute magnitudes $\|{\bf f}_i\|, i\in V$ 
 and relative magnitudes $\|{\bf f}_i-{\bf f}_j\|, (i,j)\in E$. 

\begin{remark}\label{simplegraph.rem}{\rm
For a complete subgraph ${\mathcal G}_c=(V_c, E_c)$
of order $d+1$,
one may verify that the  convex set $\triangle({\bf f}, {\mathcal G}_c)$ is a $d$-simplex if and only if
${\bf f}_i, i\in V_c$, are  {\em affinely independent} in the sense that
\begin{equation}\label{affineindependent.def}
\sum_{i\in V_c} c_i {\bf f}_i=0{\rm\  and}\ \sum_{i\in V_c} c_i=0 \ {\rm implies} \ c_i=0 \ {\rm for \ all} \ i\in V_c,
\end{equation}
 or equivalently  vectors
${\bf f}_i-{\bf f}_{i_0}\in \Rd, i\in V_c\backslash \{i_0\}$,
are linearly independent for some $i_0\in V_c$, or equivalently
the Gramian matrix  $(\langle {\bf f}_i, {\bf f}_j\rangle)_{i,j\in V_c}$ is strictly positive definite on the hyperplane
$$\Big\{(c_i)_{i\in V_c}, \  \sum_{i\in V_c} c_i=0\Big\}\subset \R^{d+1}.$$
Observe that entries in the Gramian matrix are given by
\begin{equation}\label{gram2}
\langle {\bf f}_i, {\bf f}_j\rangle=\frac{1}{2}\big( \| {\bf f}_i\|^2+  \|{\bf f}_j\|^2- \| {\bf f}_i-{\bf f}_j\|^2\big), i,j\in V_c.
\end{equation}
Then the vertex set $V_{\bf f}$ of the $d$-simplex graph ${\mathcal G}_{\bf f}$ can be constructed, from
the  absolute magnitudes $\|{\bf f}_i\|, i\in V$   and relative magnitudes $\|{\bf f}_i-{\bf f}_j\|, (i,j)\in E$. 

 Given a pair of
 complete subgraphs
 ${\mathcal G}_c=(V_c, E_c)$ and $\tilde {\mathcal G}_c=(\tilde V_c, \tilde E_c)$
 in the vertex set  $V_{\bf f}$, 
 one may verify that
   $\triangle({\bf f}, {\mathcal G}_c\cap  \tilde {\mathcal G}_c)$ is a $(d-1)$-simplex
 if and only if
 $V_c\cap \tilde V_c$ has cardinality $d$, and also
   that the hyperplane containing $\triangle({\bf f}, {\mathcal G}_c\cap  \tilde {\mathcal G}_c)$ does not include the origin
   if and only if
   ${\bf f}_i, i\in V_c\cap \tilde V_c$, are linearly independent or equivalently
   the Gramian matrix $(\langle {\bf f}_i, {\bf f}_j\rangle)_{i,j\in V_c\cap \tilde V_c}$ is  strictly positive definite.
   This together with \eqref{gram2} implies that
   edges in  the $d$-simplex graph  ${\mathcal G}_{\bf f}$
   are determined from
the available absolute magnitudes $\|{\bf f}_i\|, i\in V$  and relative magnitudes $\|{\bf f}_i-{\bf f}_j\|, (i,j)\in E$. 
 }
 \end{remark}

\section{Affine phase retrieval of vector-valued functions}\label{affine.pr.sec}


Let $\mathcal H$ be a separable Hilbert space and ${\mathcal S}$ be a linear space of $\mathcal H$-valued functions on a domain  $D$,   $\Phi$ be a family of linear measurements,
and  let  $B_{\Phi, N}=\{b_{\phi, i} , \phi\in\Phi, 1\le i\le N\}$  be a family of   reference vectors in  $\H$ for each linear measurement.
For any $f\in {\mathcal S}$, we define 
\begin{equation*}
{\mathcal A}_{f}=\{g\in {\mathcal S},  \ \|\phi(f)+b_{\phi, i}\|=\|\phi(g)+b_{\phi, i}\|, \ \phi\in \Phi, \ 1\le i\le N\}.
\end{equation*}
We say that $f\in {\mathcal S}$ is  {\em affine phase retrieval} in ${\mathcal S}$ if
 $f$ is uniquely determined from its phaseless affine measurements $\|\phi (f)+b_{\phi,i}\|, \phi\in \Phi, 1\le i\le N$, i.e.,
\begin{equation}\label{affinepr.def0} {\mathcal A}_{f}=\{f\},\end{equation}
 and
the whole space ${\mathcal S}$ is affine phase retrieval if every vector-valued function in ${\mathcal S}$ is affine phase retrieval,
see \cite{GSXW, HX18, KST95, CLS19} for affine phase retrieval of complex/real functions (vectors).
In this section, 
we consider the affine phase retrieval of  vector-valued  functions in a linear space, see Theorems \ref{affinepf.thm} and \ref{affinepfmultiple.thm}.


 For all $f, g\in {\mathcal S}$ having the same  phaseless affine measurements, i.e.,
 $$\|\phi(f)+b_{\phi, i} \|=\|\phi(g)+b_{\phi, i}\| \ \ {\rm for \ all} \  \phi\in \Phi\ {\rm and} \ 1\le i\le N,$$ we have
\begin{eqnarray*}
\langle \phi(f-g), \phi(f+g)+2b_{\phi, i} \rangle=0,  \phi\in \Phi, 1\le i\le N.
\end{eqnarray*}
This leads to the following characterization to   affine phase retrieval of a linear space.
\begin{thm}\label{affinepf.thm}  Let ${\mathcal S}$ be a linear space of ${\mathcal H}$-valued functions on a domain  $D$,   $\Phi$ be a family of
linear measurements on ${\mathcal S}$, and let  $B_{\Phi, N}=\{b_{\phi, i} , \phi\in\Phi, 1\le i\le N\}$  be a family of reference vectors in  $\H$. Then
 the  linear space   ${\mathcal S}$ 
  is  affine phase retrieval if and only if
for any $u\in {\mathcal S}$,
the linear  map  $T_u$  
defined  by
 \begin{equation}\label{Tuaffine.def}
 T_u: {\mathcal S}\ni v\longmapsto (\langle \phi(v), \phi(u)+b_{\phi, i} \rangle)_{\phi\in \Phi, 1\le i\le N}\end{equation}
is injective.

\end{thm}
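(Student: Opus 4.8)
The plan is to reduce the affine phase retrieval property to a statement about pairs of functions with matching phaseless affine measurements, and then match such pairs bijectively with kernel vectors of the maps $T_u$ via the substitution $u=(f+g)/2$, $v=f-g$. The only analytic ingredient is the polarization identity already displayed in the excerpt: expanding and subtracting $\|\phi(f)+b_{\phi,i}\|^2$ and $\|\phi(g)+b_{\phi,i}\|^2$ shows that $f,g\in\mathcal{S}$ satisfy $\|\phi(f)+b_{\phi,i}\|=\|\phi(g)+b_{\phi,i}\|$ for all $\phi\in\Phi$ and $1\le i\le N$ if and only if $\langle \phi(f-g),\phi(f+g)+2b_{\phi,i}\rangle=0$ for all such $\phi$ and $i$. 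I would record that both directions of this equivalence hold, since I need each of them once.

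For the necessity of the injectivity condition I would argue by contraposition. Assume $\mathcal{S}$ is not affine phase retrieval, so there is $f\in\mathcal{S}$ with $\mathcal{A}_f\ne\{f\}$; pick $g\in\mathcal{A}_f$ with $g\ne f$. Set $u:=(f+g)/2$ and $v:=f-g$, both of which lie in $\mathcal{S}$ because $\mathcal{S}$ is a linear space, and note $v\ne 0$. Since $\phi(f+g)+2b_{\phi,i}=2(\phi(u)+b_{\phi,i})$, the polarization identity turns the matching-measurement condition into $\langle \phi(v),\phi(u)+b_{\phi,i}\rangle=0$ for all $\phi$ and $i$, i.e.\ $T_u v=0$. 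Thus $T_u$ has nontrivial kernel, and the injectivity condition fails.

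For the converse, suppose some $T_u$ fails to be injective, so there is $v\in\mathcal{S}\setminus\{0\}$ with $T_u v=0$. Put $f:=u+v/2$ and $g:=u-v/2$, again in $\mathcal{S}$ and with $f\ne g$. Then $f-g=v$ and $f+g=2u$, so $\langle \phi(f-g),\phi(f+g)+2b_{\phi,i}\rangle=2\langle \phi(v),\phi(u)+b_{\phi,i}\rangle=0$ for all $\phi,i$; by the polarization identity, $f$ and $g$ have the same phaseless affine measurements, hence $g\in\mathcal{A}_f\setminus\{f\}$ and $\mathcal{S}$ is not affine phase retrieval. Combining the two implications gives the theorem.

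I do not anticipate a genuine obstacle here: the statement is essentially a reformulation. The only points that need attention are checking that the correspondence $(f,g)\leftrightarrow(u,v)=\big((f+g)/2,\ f-g\big)$ keeps everything inside $\mathcal{S}$ — which is precisely where the linear-space hypothesis on $\mathcal{S}$ is used, and which would fail for, say, affine subspaces — and tracking the harmless factor of $2$ through the polarization identity.
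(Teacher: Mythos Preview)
Your proof is correct and follows the same approach as the paper: the paper records the identity $\langle \phi(f-g),\,\phi(f+g)+2b_{\phi,i}\rangle=0$ for $f,g$ with matching phaseless affine measurements and states that the theorem follows, while you simply supply the details via the substitution $u=(f+g)/2$, $v=f-g$ in both directions.
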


%

 Applying 
 Theorem  \ref{affinepf.thm}
 to  the  complex range space  $R_{\C}({\bf A})$ in \eqref{complexrange.def}, we obtain a  conclusion given in \cite[Theorem 3.1] {GSXW}.

\begin{corollary} Let the real matrix ${\bf A}$ have full rank $n$. Then
any function ${\bf y}=(y_1, \ldots, y_m)^T\in R_{\C}({\bf A})$ is
determined from its affine measurements
$| y_k+b_{k, i}|, 1\le k\le m, 1\le i\le N$, if and only if for any ${\bf u}=(u_1, \ldots, u_m)^T\in R_{\C}({\bf A})$
there does not exist a nonzero vector ${\bf v}=(v_1, \ldots, v_m)^T\in R_{\C}({\bf A})$  such that
$$\Re{{v}_k^\ast ({ u}_k+b_{k, i})}= 0 \ {\rm for \ all} \  1\le k\le m \ {\rm and} \ 1\le i\le N.$$
 \end{corollary}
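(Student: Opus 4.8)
The plan is to specialize Theorem~\ref{affinepf.thm} to the concrete data ${\mathcal S}=R_{\C}({\bf A})$. First I would fix the identifications: take the domain $D=\{1,\dots,m\}$, take ${\mathcal H}=\C$ regarded as a two-dimensional real Hilbert space equipped with the inner product $\langle z,w\rangle=\Re(z\bar w)$ (so that the induced norm $\|z\|=\sqrt{\langle z,z\rangle}$ is exactly the modulus $|z|$), let $\Phi=\{\phi_1,\dots,\phi_m\}$ consist of the coordinate evaluations $\phi_k({\bf y})=y_k$, and set $b_{\phi_k,i}=b_{k,i}$. With these choices the hypotheses of Theorem~\ref{affinepf.thm} are met: $R_{\C}({\bf A})$ is a (real, indeed complex) linear space of $\C$-valued functions on $D$, $\Phi$ is a family of linear measurements, and $B_{\Phi,N}=\{b_{k,i}\}$ is a family of reference vectors in ${\mathcal H}$. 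Moreover $\|\phi_k({\bf y})+b_{k,i}\|=|y_k+b_{k,i}|$, so the assertion that every ${\bf y}\in R_{\C}({\bf A})$ is determined from the numbers $|y_k+b_{k,i}|$ is precisely the statement that the space $R_{\C}({\bf A})$ is affine phase retrieval.

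Next I would translate the injectivity condition of Theorem~\ref{affinepf.thm} into the stated form. For ${\bf u},{\bf v}\in R_{\C}({\bf A})$ the linear map $T_{\bf u}$ of \eqref{Tuaffine.def} is
$$T_{\bf u}{\bf v}=\big(\langle \phi_k({\bf v}),\phi_k({\bf u})+b_{k,i}\rangle\big)_{1\le k\le m,\,1\le i\le N}=\big(\Re\big(v_k\,\overline{u_k+b_{k,i}}\big)\big)_{k,i}.$$
Since $\Re(\zeta)=\Re(\bar\zeta)$ for every $\zeta\in\C$, one has $\Re(v_k\overline{u_k+b_{k,i}})=\Re\big(v_k^\ast(u_k+b_{k,i})\big)$, so $T_{\bf u}{\bf v}=0$ if and only if $\Re\big(v_k^\ast(u_k+b_{k,i})\big)=0$ for all $1\le k\le m$ and $1\le i\le N$. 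Hence $T_{\bf u}$ is injective on $R_{\C}({\bf A})$ precisely when there is no nonzero ${\bf v}\in R_{\C}({\bf A})$ satisfying these equations.

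Finally I would invoke Theorem~\ref{affinepf.thm}: $R_{\C}({\bf A})$ is affine phase retrieval if and only if $T_{\bf u}$ is injective for every ${\bf u}\in R_{\C}({\bf A})$, and by the previous step this is equivalent to the stated condition that for each ${\bf u}=(u_1,\dots,u_m)^T\in R_{\C}({\bf A})$ there is no nonzero ${\bf v}=(v_1,\dots,v_m)^T\in R_{\C}({\bf A})$ with $\Re(v_k^\ast(u_k+b_{k,i}))=0$ for all $k,i$. This proves the corollary. I do not expect a genuine obstacle here; the only point requiring a little care is the bookkeeping of treating $\C$ as a real Hilbert space and verifying that the associated norm is the modulus. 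I would also remark in passing that the full-rank hypothesis on ${\bf A}$ plays no role in this argument and is kept only to respect the standing convention \eqref{rankassumption}, since Theorem~\ref{affinepf.thm} applies to an arbitrary linear space.
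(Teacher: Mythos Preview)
Your proposal is correct and follows exactly the approach the paper takes: the paper simply states that the corollary is obtained by applying Theorem~\ref{affinepf.thm} to $R_{\C}({\bf A})$, and you have supplied precisely the bookkeeping that this entails (identifying ${\mathcal H}=\C$ with the real inner product $\langle z,w\rangle=\Re(z\bar w)$, taking $\Phi$ to be coordinate evaluations, and rewriting the injectivity of $T_{\bf u}$ in coordinates). Your remark that the full-rank hypothesis is not used in the argument is also accurate.
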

 We remark that the above characterization also holds for the real linear subspace of $R_{\C}({\bf A})$ in \eqref{complexrange.def},
  cf. \cite[Theorem 2.1]{GSXW}.

By  Theorem \ref{affinepf.thm}, the  verification of  affine phase  retrieval of a linear space ${\mathcal S}$  reduces to checking
the injectivity of  $T_u$ in \eqref{Tuaffine.def} for all $u\in {\mathcal S}$.
In the following theorem, see Section \ref{affinepfmultiple.thm.pfsection} for the proof, we  provide a simpler characterization to
  affine phase retrieval
when there are multiple reference vectors for each linear measurement, i.e., $N\ge 2$.

\begin{thm}\label{affinepfmultiple.thm}
Let $N\ge 2$, ${\mathcal S}$ be a linear space of ${\mathcal H}$-valued functions on a domain  $D$,   $\Phi$ be a family of
linear measurements on ${\mathcal S}$, and let  $B_{\Phi, N}=\{b_{\phi, i} , \phi\in\Phi, 1\le i\le N\}$  be a family of reference vectors in  $\H$.
Then
a sufficient condition for  affine phase  retrieval of the linear space  ${\mathcal S}$ 
is injectivity of the linear map
\begin{equation}\label{Tijaffine.def}
 T: {\mathcal S}\ni v\longmapsto (\langle \phi(v), b_{\phi, i}-b_{\phi, j} \rangle)_{\phi\in \Phi, 1\le i, j\le N}.\end{equation}
The injectivity of the map $T$ is  necessary for
affine phase retrieval of the linear space  ${\mathcal S}$
if one group of  the reference vectors  is the linear measurement of a function $f_0\in {\mathcal S}$, i.e.,
 there exists $1\le i_0\le N$ such that
 \begin{equation}\label{bphiio.def}
 b_{\phi, i_0}= \phi(f_0),\ \phi\in \Phi. \end{equation}
 \end{thm}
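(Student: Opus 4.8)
The plan is to prove sufficiency and necessity separately, in both cases going through the quadratic identity
$$\langle \phi(f-g),\, \phi(f+g)+2b_{\phi,i}\rangle = 0,\qquad \phi\in\Phi,\ 1\le i\le N,$$
recorded in the excerpt just before Theorem \ref{affinepf.thm}; note that since this left-hand side equals $\|\phi(f)+b_{\phi,i}\|^2-\|\phi(g)+b_{\phi,i}\|^2$ and norms are nonnegative, the identity is \emph{equivalent} to equality of the two phaseless affine measurements, so it may be used in both directions. For the sufficiency I would assume $T$ injective, take $g\in{\mathcal A}_f$ for an arbitrary $f\in{\mathcal S}$, and set $v=f-g$. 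The identity holds for all $\phi\in\Phi$ and $1\le i\le N$; subtracting the equation with index $j$ from the one with index $i$ cancels the common term $\langle\phi(v),\phi(f+g)\rangle$ and leaves $\langle\phi(v),\,b_{\phi,i}-b_{\phi,j}\rangle=0$ for all $\phi\in\Phi$ and $1\le i,j\le N$, i.e.\ $Tv=0$. Injectivity then forces $v=0$, so $g=f$, ${\mathcal A}_f=\{f\}$, and ${\mathcal S}$ is affine phase retrieval. This half is a routine elimination and I anticipate no difficulty.

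For the necessity I would argue by contraposition. Assume \eqref{bphiio.def}, so $b_{\phi,i_0}=\phi(f_0)$ for some $f_0\in{\mathcal S}$ and some $1\le i_0\le N$, and assume $T$ is not injective; pick a nonzero $v\in{\mathcal S}$ with $Tv=0$. Taking $j=i_0$ in the relations defining $T$ yields $\langle\phi(v),\,b_{\phi,i}-\phi(f_0)\rangle=0$ for all $\phi\in\Phi$ and $1\le i\le N$. I would then exhibit the colliding pair
$$f=\tfrac12 v-f_0,\qquad g=-\tfrac12 v-f_0,$$
both in ${\mathcal S}$ by linearity, with $f\neq g$. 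Since $\phi(f-g)=\phi(v)$ and $\phi(f+g)=-2\phi(f_0)=-2b_{\phi,i_0}$, one gets
$$\langle\phi(f-g),\,\phi(f+g)+2b_{\phi,i}\rangle = 2\langle\phi(v),\,b_{\phi,i}-b_{\phi,i_0}\rangle = 0$$
for all $\phi\in\Phi$ and $1\le i\le N$; equivalently $\|\phi(f)+b_{\phi,i}\|=\|\phi(g)+b_{\phi,i}\|$ for all such $\phi,i$. Hence $g\in{\mathcal A}_f\setminus\{f\}$, so $f$, and therefore ${\mathcal S}$, fails to be affine phase retrieval.

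I expect the only genuinely non-mechanical point to be the choice of this colliding pair: writing it as $f_0\pm\tfrac12 v$ is exactly what collapses the ``sum part'' $\phi(f+g)$ onto $-2b_{\phi,i_0}$, so that the affine condition reduces to the vanishing inner products $\langle\phi(v),b_{\phi,i}-b_{\phi,i_0}\rangle=0$ supplied by $Tv=0$. This is where hypothesis \eqref{bphiio.def} enters, and explains why necessity is asserted only under that assumption, whereas sufficiency needs nothing beyond $N\ge 2$ and the elimination above.
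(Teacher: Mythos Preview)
Your proof is correct and follows essentially the same approach as the paper. The paper expands the squared norms directly rather than routing through the identity $\langle\phi(f-g),\phi(f+g)+2b_{\phi,i}\rangle=0$, but the subtraction over $i,j$ in the sufficiency and the colliding pair $-f_0\pm\tfrac12 v$ in the necessity are identical to what you propose.
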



A necessary condition for
affine phase retrieval of the linear space  ${\mathcal S}$ is that
the map $T_\Phi$ in \eqref{unitary.injective} is injective.
The above necessary condition is also sufficient when the reference vectors
 $B_{\Phi, N}=\{b_{\phi, i} , \phi\in\Phi, 1\le i\le N\}$
 satisfies
\begin{equation}\label{HBphin}
{\mathcal H}={\rm span} \{b_{\phi, i}-b_{\phi, j}, 1\le i,j\le N\} \ {\rm \ for \ all} \ \phi\in \Phi,\end{equation}
 as the map $T$
in \eqref{Tijaffine.def} is injective under the above assumption.
Therefore applying Theorem \ref{affinepfmultiple.thm}, we have
the  following equivalence between   affine phase  retrieval  of the linear space ${\mathcal S}$
and  injectivity of the map $T_\Phi$, cf. \cite{GSXW} for the scalar setting.

\begin{cor}\label{vector.affine.cor}
Let $N\ge 2$, ${\mathcal S}$ be a linear space of vector-valued functions on a domain  $D$,   $\Phi$ be a family of linear
 measurements on ${\mathcal S}$, and let  $B_{\Phi, N}=\{b_{\phi, i} , \phi\in\Phi, 1\le i\le N\}$  be a family of reference vectors in
  $\H$ satisfying \eqref{HBphin}.
Then ${\mathcal S}$ is affine phase retrieval  if and only if the map $T_\Phi$ in \eqref{unitary.injective} is injective.
\end{cor}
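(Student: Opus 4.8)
The plan is to derive Corollary~\ref{vector.affine.cor} directly from Theorem~\ref{affinepfmultiple.thm} by checking that, under assumption \eqref{HBphin}, the two maps $T$ of \eqref{Tijaffine.def} and $T_\Phi$ of \eqref{unitary.injective} have the same null space. First I would prove the \emph{if} direction: suppose $T_\Phi$ is injective. Fix $v\in{\mathcal S}$ with $Tv=0$, i.e. $\langle\phi(v),b_{\phi,i}-b_{\phi,j}\rangle=0$ for all $\phi\in\Phi$ and all $1\le i,j\le N$. By \eqref{HBphin} the differences $b_{\phi,i}-b_{\phi,j}$ span ${\mathcal H}$ for each fixed $\phi$, so $\phi(v)$ is orthogonal to all of ${\mathcal H}$, hence $\phi(v)=0$; since this holds for every $\phi\in\Phi$, injectivity of $T_\Phi$ forces $v=0$. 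Thus $T$ is injective, and Theorem~\ref{affinepfmultiple.thm} (the sufficiency half, which needs only $N\ge 2$ and no further hypothesis) gives that ${\mathcal S}$ is affine phase retrieval.

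For the \emph{only if} direction, suppose ${\mathcal S}$ is affine phase retrieval. The cleanest route is to invoke the necessity half of Theorem~\ref{affinepfmultiple.thm}, which requires one group of reference vectors to be a linear measurement of some $f_0\in{\mathcal S}$, so here I would first take $f_0=0\in{\mathcal S}$ and note that if the family $B_{\Phi,N}$ does not already contain $\{0\}$ we may simply adjoin it without changing either hypothesis \eqref{HBphin} or the affine phase retrieval property — adding an extra reference vector only adds measurements, and $0=\phi(0)$ certifies \eqref{bphiio.def}. With that normalization the necessity part of Theorem~\ref{affinepfmultiple.thm} applies and yields injectivity of $T$. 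Finally, since the columns of $T$ include all the inner products $\langle\phi(v),b_{\phi,i}-b_{\phi,0}\rangle$ and, by \eqref{HBphin}, these differences span ${\mathcal H}$, injectivity of $T$ implies injectivity of $T_\Phi$: if $T_\Phi v=0$ then each $\phi(v)=0$, so a fortiori $Tv=0$, whence $v=0$.

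Alternatively, and perhaps more transparently, I can avoid the adjunction trick for the forward necessity by arguing the contrapositive directly: if $T_\Phi$ is not injective, pick $0\ne w\in{\mathcal S}$ with $\phi(w)=0$ for all $\phi\in\Phi$; then for \emph{any} $f\in{\mathcal S}$ the functions $f$ and $f+w$ have identical affine measurements $\|\phi(f)+b_{\phi,i}\|=\|\phi(f)+\phi(w)+b_{\phi,i}\|$, so $f$ is not affine phase retrieval and neither is ${\mathcal S}$. This shows injectivity of $T_\Phi$ is unconditionally necessary (a fact already noted in the paragraph preceding the corollary), and combined with the first paragraph it closes the equivalence without appealing to the necessity half of Theorem~\ref{affinepfmultiple.thm} at all.

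The only genuine content is the bookkeeping around \eqref{HBphin} — verifying that spanning by the \emph{differences} $b_{\phi,i}-b_{\phi,j}$ (rather than by the $b_{\phi,i}$ themselves) is exactly what makes $\mathrm{null}(T)=\mathrm{null}(T_\Phi)$, and that this equality is what converts Theorem~\ref{affinepfmultiple.thm}'s sufficient-and-(conditionally)-necessary statement into a clean iff. I do not anticipate a real obstacle; the main thing to be careful about is not to conflate the linear-algebra condition \eqref{HBphin} with the stronger requirement that $\{b_{\phi,i}\}$ itself span ${\mathcal H}$, and to make sure the second direction is invoked in a form whose hypotheses are actually met (hence the preference for the contrapositive argument in the third paragraph, which sidesteps \eqref{bphiio.def} entirely).
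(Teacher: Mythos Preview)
Your proposal is correct and matches the paper's approach essentially line for line. The paper's argument, given in the paragraph immediately preceding the corollary, is precisely: (i) injectivity of $T_\Phi$ is always necessary for affine phase retrieval (stated as a bare fact, which your third-paragraph contrapositive spells out), and (ii) under \eqref{HBphin}, injectivity of $T_\Phi$ forces injectivity of the map $T$ in \eqref{Tijaffine.def}, so the sufficiency half of Theorem~\ref{affinepfmultiple.thm} applies --- exactly your first paragraph. Your alternative route in the second paragraph (adjoining a zero reference group to invoke the necessity half of Theorem~\ref{affinepfmultiple.thm}) is a valid detour but, as you yourself note, unnecessary; the paper and your third paragraph both take the direct contrapositive instead.
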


In the scalar setting, i.e., ${\mathcal H}=\R$,  the requirement \eqref{HBphin} is satisfied only if
$ F$ is an empty set, where
$ F=\{\phi\in \Phi, \  \sum_{i,j=1}^n |b_{\phi, i}-b_{\phi, j}|^2=0\}$.
Applying Theorem \ref{affinepfmultiple.thm}, we have the following slight generalization of Corollary
\ref{vector.affine.cor} in the scalar setting.

%

\begin{cor}\label{scalar.affine.cor}
Let  $N\ge 2$,  ${\mathcal S}$ be a linear space of scalar-valued functions on a domain  $D$,   $\Phi$ be a family of
 linear measurements on ${\mathcal S}$, and let  $B_{\Phi, N}=\{b_{\phi, i} , \phi\in\Phi, 1\le i\le N\}$  be a family of reference real numbers.
Then ${\mathcal S}$ is affine phase retrieval  if there does not exist a nonzero function $f$ such that
$\phi(f)=0$ for all $\phi\not \in F$.
\end{cor}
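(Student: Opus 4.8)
The plan is to obtain Corollary \ref{scalar.affine.cor} as a direct specialization of the sufficiency half of Theorem \ref{affinepfmultiple.thm} to the scalar case $\mathcal{H}=\R$. Recall that Theorem \ref{affinepfmultiple.thm} asserts that injectivity of the linear map $T$ in \eqref{Tijaffine.def} is a sufficient condition for affine phase retrieval of $\mathcal{S}$. Thus it suffices to show that, under the hypothesis of the corollary, the map $T$ is injective.

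First I would unwind the definition of $T$ in the scalar setting: for $v\in\mathcal{S}$, $\phi\in\Phi$ and $1\le i,j\le N$, the inner product $\langle\phi(v),b_{\phi,i}-b_{\phi,j}\rangle$ is simply the product of real numbers $\phi(v)\,(b_{\phi,i}-b_{\phi,j})$. Hence $T(v)=0$ if and only if $\phi(v)\,(b_{\phi,i}-b_{\phi,j})=0$ for every $\phi\in\Phi$ and every pair $1\le i,j\le N$. Next I would split the index set $\Phi$ according to membership in $F=\{\phi\in\Phi:\sum_{i,j=1}^{N}|b_{\phi,i}-b_{\phi,j}|^2=0\}$. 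For $\phi\in F$ all the differences $b_{\phi,i}-b_{\phi,j}$ vanish, so the equations associated with $\phi$ impose no constraint on $v$; for $\phi\notin F$ at least one difference $b_{\phi,i}-b_{\phi,j}$ is nonzero, and then $\phi(v)\,(b_{\phi,i}-b_{\phi,j})=0$ forces $\phi(v)=0$. Consequently $T(v)=0$ holds precisely when $\phi(v)=0$ for all $\phi\notin F$. Under the hypothesis of the corollary the only $v\in\mathcal{S}$ with this property is $v=0$, so $T$ is injective, and Theorem \ref{affinepfmultiple.thm} then yields that $\mathcal{S}$ is affine phase retrieval.

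Since this is a straightforward specialization, there is no genuine obstacle in the argument; the only point requiring a bit of care is the case analysis on $F$, namely recognizing that the vanishing of $\sum_{i,j}|b_{\phi,i}-b_{\phi,j}|^2$ is exactly the statement that the measurement $\phi$ contributes no row to $T$, while for $\phi\notin F$ a single nonzero difference $b_{\phi,i}-b_{\phi,j}$ already annihilates the scalar $\phi(v)$. Note also that this gives only a sufficient condition, consistent with the phrasing of the corollary, because in the scalar setting the reference numbers never satisfy the spanning condition \eqref{HBphin} outside of the case $F=\emptyset$, so the converse implication of Theorem \ref{affinepfmultiple.thm} is not available in general.
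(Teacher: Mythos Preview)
Your proposal is correct and follows exactly the route the paper intends: the corollary is stated as a direct consequence of Theorem~\ref{affinepfmultiple.thm}, and your argument that in the scalar case $T(v)=0$ is equivalent to $\phi(v)=0$ for all $\phi\notin F$ is precisely the specialization needed to verify the injectivity hypothesis of that theorem.
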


\section{Proofs}\label{proofs.section}

 In this section, we collect the  proofs of Theorems \ref{hilbertpr.thm}, \ref{subspacePR.thm}, \ref{vectorfield.thm},
  \ref{complexpr.thm}, \ref{necess.charac}, \ref{quaternionpr.thm} and \ref{affinepfmultiple.thm},
 and Propositions  \ref{mtwopr.prop} and \ref{complex.sis.pr}.

\subsection{Proof of Theorem \ref{hilbertpr.thm}}\label{hilbertpr.thm.section}

First the necessity.   Suppose, on the contrary, that there exist
$u, v\in {\mathcal S}$ such that \eqref{hilbertpr.thm.eq3}, \eqref{hilbertpr.thm.eq4} and
\eqref{hilbertpr.thm.eq5} hold.
Set
$ g=u-v$.
From 
\eqref{hilbertpr.thm.eq3} and \eqref{hilbertpr.thm.eq4}, we obtain
\begin{eqnarray*}  \label{hilbertpr.thm.proof.eq2}
\|\phi(g)\|^2  
\hskip-0.1in&=\hskip-0.05in&\hskip-0.05in
\|\phi(u)\|^2+\|\phi(v)\|^2-2\langle  \phi(u), \phi(v)\rangle\nonumber\\
\hskip-0.1in&=\hskip-0.05in&\hskip-0.05in \|\phi(u)\|^2+\|\phi(v)\|^2+2\langle \phi(u), \phi(v)\rangle
=\|\phi(f)\|^2
\end{eqnarray*}
for all $\phi\in \Phi$. This proves that
\begin{equation} \label{hilbertpr.thm.proof.eq3} g\in {\mathcal M}_{f, \Phi}.\end{equation}
By \eqref{hilbertpr.thm.eq5}, we have 
$$
\langle \phi_0(f), \phi_1(f)\rangle- \langle \phi_0(g), \phi_1(g)\rangle
= 2 (\langle \phi_0(u), \phi_1(v)\rangle + \langle \phi_1(u), \phi_0(v)\rangle)\ne 0
$$
for some $\phi_0, \phi_1\in \Phi$. Hence
$g\ne Uf$ for all  $U\in {\mathcal U}({\mathcal{H}})$. This together with
\eqref{hilbertpr.thm.proof.eq3} contradicts to the  phase retrieval of  function $f$.

Now the sufficiency.  Suppose, on the contrary, that there exists
$g\in {\mathcal M}_{f, \Phi}$ such that
$g\ne Uf$ for all $U\in {\mathcal U}(\mathcal{H})$.
Then there exist $\phi_0, \phi_1\in\Phi$ by  Assumption \ref{unitary.assumption}  such that
\begin{eqnarray}  \label {orthogonal.lem.pf.eq6}
\langle \phi_0(g), \phi_1(g)\rangle \neq \langle \phi_0(f), \phi_1(f)\rangle.
\end{eqnarray}
Set
$u= (f+g)/2$ and $v=(f-g)/2\in {\mathcal S}$.
 One may verify that
$f= u+v$,
 \begin{equation*}
\langle \phi(u), \phi(v)\rangle= \frac{1}{4} \big(\langle \phi(f), \phi(f)\rangle - \langle \phi(g), \phi(g)\rangle \big)=0
\end{equation*}
 for all $\phi\in \Phi$ by  the assumption that $g\in {\mathcal M}_{f, \Phi}$,
 and
\begin{equation*}  \label {orthogonal.lem.pf.eq7}  
\langle \phi_0(u), \phi_1(v) \rangle+\langle \phi_0(v), \phi_1(u) \rangle
=\frac{1}{2}\big(\langle \phi_0(f), \phi_1(f)\rangle-\langle \phi_0(g), \phi_1(g)\rangle\big)\ne 0  
\end{equation*}
by  \eqref{orthogonal.lem.pf.eq6}.
This contradicts to the hypothesis in the sufficiency.

\subsection{Proof of Theorem \ref{subspacePR.thm}}\label{subspacePR.thm.pfsection}
 By $(I-P_{\tilde {\mathcal H}})\pm P_{\tilde {\mathcal H}}\in {\mathcal U}({\mathcal{H}})$,
the unitary invariance of the linear space ${\mathcal S}$  and  the set $\Phi$ of linear measurements, we have
\begin{equation}\label{subspacePR.cor.pf.eq1}
P_{\tilde {\mathcal H}}f\in {\mathcal S}
\end{equation}
and
\begin{equation} \label{projective.eq14}
\phi\big((I-2P_{\tilde{\mathcal H}}) f\big)= (I-2P_{\tilde{\mathcal H}}) \phi(f)
\end{equation}
for all $f\in {\mathcal S}$ and $\phi\in \Phi$.
Therefore
\begin{equation}
\label{projective.eq10}P_{\tilde {\mathcal H}} {\mathcal S}=\{h\in {\mathcal S},   \   h(x)\in {\tilde {\mathcal H}} \ {\rm for \ all} \ x\in D\}\subset {\mathcal S}
\end{equation}
and
 \begin{equation}\label{orthognal.projective.eq1}
 \phi(P_{\tilde {\mathcal H}}f)=P_{\tilde {\mathcal H}}\phi(f)\in \tilde{\mathcal H}\ \ {\rm for\ all}\ f\in \mathcal S.
 \end{equation}

For any real unitary operator $U \in {\mathcal U}(\tilde{\mathcal H})$, $\phi\in \Phi$  and $g\in P_{\tilde {\mathcal H}} {\mathcal S}$, we have
\begin{equation}\label{projective.eq11}
 Ug(x)\in \tilde {\mathcal H}\ \ {\rm for \ all}\ x\in D,\end{equation}
\begin{equation} \label{projective.eq12}  Ug= \big(U+(I-P_{\tilde{\mathcal H}})\big) g\in  {\mathcal S},
\end{equation}
and
\begin{equation}  \label{projective.eq12+}
\phi(Ug)= \phi\big((UP_{\tilde{\mathcal H}}+I-P_{\tilde{\mathcal H}}) g\big)=
(UP_{\tilde{\mathcal H}}+I-P_{\tilde{\mathcal H}}) \phi(g)= U \phi(g)
\end{equation}
by \eqref{projective.eq14}, \eqref{orthognal.projective.eq1} and unitary invariance of the linear space  ${\mathcal S}$ and   the set $\Phi$ of linear measurements.
Therefore  $P_{\tilde {\mathcal H}} {\mathcal S}$ is unitary invariant,
\begin{equation}  \label{projective.eq13}
U P_{\tilde {\mathcal H}} {\mathcal S}\subset P_{\tilde {\mathcal H}} {\mathcal S}, \ \  U\in {\mathcal U}(\tilde{\mathcal H}),
\end{equation}
and
the set $\Phi$ of linear measurements is also unitary invariant for the linear space $P_{\tilde {\mathcal H}} {\mathcal S}$
by \eqref{projective.eq10}, \eqref{projective.eq11},  \eqref{projective.eq12} and \eqref{projective.eq12+}.

Take an $f\in {\mathcal S}$. Suppose, on the contrary, that $P_{\tilde {\mathcal H}}f$ is not  phase retrieval. Applying
 Theorem \ref{hilbertpr.thm} to the linear space $P_{\tilde {\mathcal H}} {\mathcal S}$, 
 we know  that there exist $u, v\in P_{\tilde {\mathcal H}} {\mathcal S}$ satisfying
 \eqref{hilbertpr.thm.eq3}, \eqref{hilbertpr.thm.eq4} and \eqref{hilbertpr.thm.eq5}. 
 Define
 \begin{equation} \label{projective.eq15}
 \tilde u=(I- P_{\tilde {\mathcal H}})f+u\ \ {\rm  and} \ \  \tilde v=v\in {\mathcal S}.\end{equation}
   From    \eqref{hilbertpr.thm.eq3}, \eqref{hilbertpr.thm.eq4}, \eqref{hilbertpr.thm.eq5},
  \eqref{projective.eq14}, \eqref{orthognal.projective.eq1},
  \eqref{projective.eq15} and the projection property of $P_{\tilde {\mathcal H}}$, we have
$ f=\tilde{u}+\tilde{v}$,
 \begin{equation*}\label{invariant.pr.eq2}\langle \phi(\tilde u), \phi(\tilde v)\rangle=
 \langle (I-P_{\tilde {\mathcal H}}) \phi(f), \phi( v)\rangle+\langle \phi(u), \phi( v)\rangle  
 =0\end{equation*}
for all $\phi\in \Phi$, and
 \begin{eqnarray*}\label{invariant.pr.eq3}
&&\langle \phi_0(\tilde u), \phi_1(\tilde v)\rangle+\langle {\phi}_1(\tilde u), \phi_0(\tilde v)\rangle\nonumber\\
&\hskip-0.08in =& \hskip-0.08in \langle  (I-P_{\tilde {\mathcal H}}) \phi_0(f)+\phi_0(u),\phi_1(v)\rangle
+\langle (I-P_{\tilde {\mathcal H}})\phi_1(f)+\phi_1(u), \phi_0(v)\rangle\\
 &\hskip-0.08in =& \hskip-0.08in\langle \phi_0(u), \phi_1(v)\rangle+\langle {\phi}_1(u),\phi_0(v)\rangle\ne 0  
 \end{eqnarray*}
 for the linear measurements $\phi_0, \phi_1$ in \eqref{hilbertpr.thm.eq4}.
 Therefore $f$ is not  phase retrieval in ${\mathcal S}$ by Theorem \ref{hilbertpr.thm}.
 This contradicts to our hypothesis on the function $f$ and hence completes the proof.

\subsection{Proof of Theorem \ref{vectorfield.thm}}\label{vectorfield.thm.section}

To prove Theorem  \ref{vectorfield.thm}, we need a technical lemma  about isomorphism of two $d$-simplices.

\begin{lem}\label{simplex.lem}
Let ${\bf x}_i\in \Rd, 0\le i\le d$, be affinely independent.
If ${\bf y}_i, 0\le i\le d$, satisfy
\begin{equation}\label{simplex.lem.eq1}
\|{\bf y}_i\|=\|{\bf x}_i\|, \ 0\le i\le d
\end{equation}
and
\begin{equation}\label{simplex.lem.eq2}
\|{\bf y}_i-{\bf y}_j\|=\|{\bf x}_i-{\bf x}_j\|, \ 0\le i, j\le d,
\end{equation}
then there exists an orthogonal matrix $ U\in {\mathcal U}(\Rd)$ such that
\begin{equation} \label{simplex.lem.eq3}
{\bf y}_i= U {\bf x}_i, \ 0\le i\le d.
\end{equation}
\end{lem}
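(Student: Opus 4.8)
The plan is to convert the metric hypotheses into inner‑product identities and then realize $U$ as the linear map that sends a coordinate system drawn from the $\mathbf{x}_i$ to the corresponding $\mathbf{y}_i$.

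First I would apply the polarization identity \eqref{gram2}: combining \eqref{simplex.lem.eq1} and \eqref{simplex.lem.eq2} gives
$$
\langle \mathbf{y}_i, \mathbf{y}_j\rangle = \tfrac12\big(\|\mathbf{y}_i\|^2+\|\mathbf{y}_j\|^2-\|\mathbf{y}_i-\mathbf{y}_j\|^2\big) = \langle \mathbf{x}_i, \mathbf{x}_j\rangle, \qquad 0\le i,j\le d,
$$
so the Gram matrices of $(\mathbf{x}_i)_{0\le i\le d}$ and $(\mathbf{y}_i)_{0\le i\le d}$ coincide. Next, since $\mathbf{x}_0,\ldots,\mathbf{x}_d$ are affinely independent, the $d$ vectors $\mathbf{x}_i-\mathbf{x}_0$ ($1\le i\le d$) are linearly independent and span $\Rd$; hence $\{\mathbf{x}_0,\ldots,\mathbf{x}_d\}$ spans $\Rd$ and contains a basis of $\Rd$ of cardinality $d$. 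Because \eqref{simplex.lem.eq1} and \eqref{simplex.lem.eq2} are symmetric under permutations of $\{0,1,\ldots,d\}$, I may assume without loss of generality that $\mathbf{x}_1,\ldots,\mathbf{x}_d$ is such a basis, and define the linear map $U$ on $\Rd$ by $U\mathbf{x}_i=\mathbf{y}_i$ for $1\le i\le d$. Expanding $\mathbf{u}=\sum_{i=1}^d a_i\mathbf{x}_i$ and $\mathbf{v}=\sum_{j=1}^d b_j\mathbf{x}_j$ and invoking the Gram equality,
$$
\langle U\mathbf{u}, U\mathbf{v}\rangle=\sum_{i,j=1}^d a_ib_j\langle\mathbf{y}_i,\mathbf{y}_j\rangle=\sum_{i,j=1}^d a_ib_j\langle\mathbf{x}_i,\mathbf{x}_j\rangle=\langle\mathbf{u},\mathbf{v}\rangle,
$$
so $U$ preserves the inner product on all of $\Rd$ and therefore $U\in{\mathcal U}(\Rd)$.

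The remaining point is the leftover index $0$: writing $\mathbf{x}_0=\sum_{i=1}^d c_i\mathbf{x}_i$ in the chosen basis, one has $U\mathbf{x}_0=\sum_{i=1}^d c_i\mathbf{y}_i$, and, again by the Gram equality,
$$
\Big\|\mathbf{y}_0-\sum_{i=1}^d c_i\mathbf{y}_i\Big\|^2=\|\mathbf{y}_0\|^2-2\sum_{i=1}^d c_i\langle\mathbf{y}_0,\mathbf{y}_i\rangle+\sum_{i,j=1}^d c_ic_j\langle\mathbf{y}_i,\mathbf{y}_j\rangle=\Big\|\mathbf{x}_0-\sum_{i=1}^d c_i\mathbf{x}_i\Big\|^2=0,
$$
which forces $U\mathbf{x}_0=\mathbf{y}_0$ and, together with $U\mathbf{x}_i=\mathbf{y}_i$ for $1\le i\le d$, yields \eqref{simplex.lem.eq3}.

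I do not anticipate a serious obstacle. The only step genuinely using the affine‑independence hypothesis is the extraction of a $d$‑element basis from $\{\mathbf{x}_0,\ldots,\mathbf{x}_d\}$ (which also guarantees $U$ is well defined on all of $\Rd$); everything else reduces to the polarization identity and one short norm computation. An alternative route — first constructing an affine isometry from the difference vectors $\mathbf{x}_i-\mathbf{x}_0$ and $\mathbf{y}_i-\mathbf{y}_0$, whose Gramians again agree, and then using $\|\mathbf{y}_i\|=\|\mathbf{x}_i\|$ to show the translation part vanishes — reaches the same conclusion without relabelling, but the argument above is shorter.
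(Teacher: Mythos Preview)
Your proof is correct and follows essentially the same strategy as the paper: convert the norm hypotheses into the Gram identity $\langle\mathbf{y}_i,\mathbf{y}_j\rangle=\langle\mathbf{x}_i,\mathbf{x}_j\rangle$ via polarization, define $U$ on a basis extracted from the configuration, check orthogonality by expanding, and verify the remaining index by a direct norm computation.

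The only difference is the choice of basis. You pick a $d$-element subset of $\{\mathbf{x}_0,\ldots,\mathbf{x}_d\}$ itself (which requires the relabelling/WLOG step, since affine independence does not force every $d$-subset to be linearly independent), whereas the paper takes exactly the ``alternative route'' you sketch at the end: it defines $U$ on the difference basis $\mathbf{x}_i-\mathbf{x}_0$, $1\le i\le d$, via $U(\mathbf{x}_i-\mathbf{x}_0)=\mathbf{y}_i-\mathbf{y}_0$, and then recovers $U\mathbf{x}_0=\mathbf{y}_0$ by pairing against that basis. The paper's choice avoids the permutation argument and uses the affine-independence hypothesis more directly, but both arguments are equally short and rest on the same Gram-matrix computation.
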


\begin{proof}  For the completeness of this paper, we include a proof.  By \eqref{simplex.lem.eq1} and \eqref{simplex.lem.eq2}, we have
\begin{equation} \label{simplex.lem.pfeq1}
\langle {\bf y}_{i}, {\bf y}_{j}\rangle= \langle {\bf x}_{i}, {\bf x}_{j}\rangle,\  0\le i, j\le d.
\end{equation}
Let $ U$ be the linear transform on $\Rd$ determined by
\begin{equation} \label{simplex.lem.pfeq2}
 U({\bf x}_{i}-{\bf x}_{0})= {\bf y}_i-{\bf y}_{0}, \ 1\le i\le d,\end{equation}
which is well-defined by the linear independence of ${\bf x}_{i}-{\bf x}_{0}, 1\le i\le d$. For any ${\bf x}=\sum_{i=1}^d c_i ({\bf x}_{i}-{\bf x}_{0})\in \Rd$, we have
\begin{eqnarray} \label{simplex.lem.pfeq2++}  \|U{\bf x}\|^2  & \hskip-0.08in = & \hskip-0.08in
 \Big\|\sum_{i=1}^d c_i ({\bf y}_{i}-{\bf y}_{0})\Big\|^2=
\sum_{i, j=1}^d c_i c_j \langle  {\bf y}_{i}-{\bf y}_{0}, {\bf y}_{j}-{\bf y}_{0}\rangle\nonumber\\
& \hskip-0.08in = & \hskip-0.08in
\sum_{i, j=1}^d c_i c_j \langle  {\bf x}_{i}-{\bf x}_{0}, {\bf x}_{j}-{\bf x}_{0}\rangle
=\|{\bf x}\|^2
\end{eqnarray}
by \eqref{simplex.lem.pfeq1} and \eqref{simplex.lem.pfeq2}.
This proves that $U$ is  an orthogonal matrix in ${\mathcal U}(\Rd)$.
Therefore by \eqref{simplex.lem.pfeq2}  and \eqref{simplex.lem.pfeq2++} it suffices to prove that
${\bf y}_0= U {\bf x}_0$, or equivalently
\begin{equation} \label{simplex.lem.pfeq3}
{\bf x}_0=  U^T {\bf y}_0.
\end{equation}
 By \eqref{simplex.lem.pfeq1}, \eqref{simplex.lem.pfeq2} and \eqref{simplex.lem.pfeq2++}, we have
$$ 
\langle{\bf x}_{i}-{\bf x}_{0}, {\bf x}_{0}\rangle=\langle{\bf y}_{i}-{\bf y}_{0}, {\bf y}_{0}\rangle=\langle U({\bf x}_{i}-{\bf x}_{0}), {\bf y}_{0}\rangle=
\langle {\bf x}_{i}-{\bf x}_{0},  U^T {\bf y}_{0}\rangle
$$
for all $1\le i\le d$.
This together with   the linear independence of ${\bf x}_{i}-{\bf x}_{0}, 1\le i\le d$,
%
%
proves \eqref{simplex.lem.pfeq3}
  and completes the proof.
\end{proof}

\begin{proof}[Proof of Theorem \ref{vectorfield.thm}]
Let ${\mathcal G}_1=(V_1, E_1)$ be a complete subgraph  of order $d+1$ such that
$\triangle({\bf f}, {\mathcal G}_1)$ is a $d$-simplex. Let ${\bf g}=({\bf g}_i)_{i\in V}$ be a vector field satisfying \eqref{vectorfield.thm.pfeq1}, \eqref{vectorfield.thm.pfeq2}. By
Lemma \ref{simplex.lem},  there exists
 an orthogonal matrix $U$ such that
\begin{equation}  \label{vectorfield.thm.pfeq3}
U{\bf f}_i={\bf g}_i, i\in V_1.
\end{equation}
Define $\tilde  {\bf  g}= U^{T} {\bf g}$. One may verify that the new vector field $\tilde {\bf g}=(\tilde {\bf g}_i)_{i\in V}$ still satisfies
\eqref{vectorfield.thm.pfeq1} and \eqref{vectorfield.thm.pfeq2} and moreover, it coincides with the original vector field on
vertices in the subgraph ${\mathcal G}_1$,
\begin{equation}  \label{vectorfield.thm.pfeq4}
\tilde {\bf g}_i= {\bf f}_i, i\in V_1.
\end{equation}
Now we prove that the vector fields $\tilde {\bf g}$ and ${\bf f}$ are the same, i.e.,
\begin{equation}
\tilde {\bf g}_i= {\bf f}_i, i\in V.
\end{equation}
By the  covering property for  $d$-simplices, and the connectivity of   $d$-simplex graph ${\mathcal G}_{\bf f}$,
it suffices to prove that
\begin{equation}\label{equationforV1V2}
\tilde {\bf g}_i={\bf f}_i, i\in  V_1\cup V_2,
\end{equation}
where
 ${\mathcal G}_2=(V_2, E_2)$ be a complete subgraph of order $d+1$ such that
 $\triangle({\bf f}, {\mathcal G}_2)$ is a neighboring $d$-simplex of $\triangle({\bf f}, {\mathcal G}_1)$.
By the assumption on the $d$-simplex  $\triangle({\bf f}, {\mathcal G}_2)$,
 we have that $V_1\cap V_2$ has $d$ vertices and ${\bf f}_i, i\in V_1\cap V_2$,  form a basis for $\Rd$, see Remark \ref{simplegraph.rem}.
For all $i\in V_1\cap V_2$ and $k\in V_2\backslash V_1$, we obtain from  \eqref{vectorfield.thm.pfeq1}, \eqref{vectorfield.thm.pfeq2} and
\eqref{vectorfield.thm.pfeq4} that
\begin{eqnarray}\label{vector.temp0}
\langle \tilde {\bf g}_k,  {\bf f}_i\rangle &\hskip-0.08in  = & \hskip-0.08in  \langle \tilde {\bf g}_k, \tilde {\bf g}_i\rangle = \frac{1}{2} (\|\tilde {\bf g}_k\|^2+ \|\tilde {\bf g}_i\|^2-
\|\tilde {\bf g}_k-\tilde {\bf g}_i\|^2)\nonumber\\
& \hskip-0.08in = & \hskip-0.08in \frac{1}{2} (\|{\bf f}_k\|^2+ \|\tilde {\bf f}_i\|^2-
\|\tilde {\bf f}_k-\tilde {\bf f}_i\|^2)=
\langle {\bf f}_k, {\bf f}_i\rangle.
\end{eqnarray}
By \eqref{vector.temp0} and the basis property for ${\bf f}_i, i\in V_1\cap V_2$, we obtain $\tilde {\bf g}_k= {\bf f}_k$.
This proves \eqref{equationforV1V2} and completes the proof.
\end{proof}

\subsection{Proof of Theorem \ref{complexpr.thm}} \label{complexpr.thm.section}
Let ${\mathcal C}_{\R^2}$ be as in \eqref{cr2.def}, and
define  the linear map $A:{\mathcal C} \longmapsto {\mathcal C}_{\R^2}$ by
$$Af=\begin{pmatrix} \Re f \\ \Im f
 \end{pmatrix}, \ f\in {\mathcal C}.$$  Then  $A$ is one-to-one and onto,  and
 \begin{equation}\label{afg.def0}
 \langle Af(x), Ag(y)\rangle= \Re(f(x) \bar g(y))
 \end{equation}
 for all $f, g\in {\mathcal C}$ and  $x, y\in D$. Also one may verify that
  $$Ag=U (Af)$$ for some orthogonal matrix $U$ of size $2\times 2$ if and only if
  either $ g=zf$ or  $g=z\bar f$ for some $z\in \T$.
This together with the complex conjugate invariance
of the complex linear space ${\mathcal C}$ implies that the real linear space  ${\mathcal C}_{\R^2}$ is
invariant under all real  orthogonal transformations on $\R^2$.
Therefore by \eqref{afg.def0} and Theorem \ref{hilbertpr.thm}, the proof  of Theorem \ref{complexpr.thm} reduces to establishing
Assumption \ref{unitary.assumption} for the family  $\Phi=\{\delta_x, x\in D\}$ of
the point-evaluation measurements on ${\mathcal C}_{\R^2}$, which can be reformulated as the equivalence
between the following two statements for $f, g\in {\mathcal C}$:
\begin{itemize}
\item[{(i)}] $g=zf$ or  $g=z\bar f$ for  some $z\in \T$. 

\item[{(ii)}]
  $\Re (f(x) \bar f(y)-g(x)\bar g(y))=0$ for all $x, y\in D$.
\end{itemize}
The implication  (i)$\Longrightarrow$(ii) is obvious. Now we prove that (ii)$\Longrightarrow$(i).
Applying (ii) with $y$ replaced by $x\in D$, we have
\begin{equation}\label{new5.4}
|g(x)|^2=|f(x)|^2, x\in D.
\end{equation}
Therefore the conclusion (i) follows from \eqref{new5.4} if $f$ is a zero function. For a nonzero function $f$, without loss of generality, there exists $x_0\in D$ such that
\begin{equation} \label{new5.5} 0\ne f(x_0)=g(x_0)\in \R
\end{equation}
by \eqref{new5.4}, otherwise replacing $f$ by $z_1 f$ and $g$ by  $z_2 g$ respectively, where $z_1, z_2\in \T$.
Applying  (ii) with $y$ replaced by $x_0$, we obtain that
\begin{equation}   \label{new5.6}
\Re f(x)= \Re g(x), x\in D.
\end{equation}
 Set \begin{equation}\label{D1D2.def0}
 D_1=\{x\in D, \   g(x)= f(x)\}\ {\rm  and} \ D_2= \{x\in D, \   g(x)= \bar f(x)\}.\end{equation}
 Then
  \begin{equation}\label{D1D2D.new} D_1\cup D_2= D.
 \end{equation}
 by \eqref{new5.4}  and \eqref{new5.6}.
 The statement (i) is proved if either $D_1=D$ or $D_2=D$.  Now we consider the case that
 \begin{equation} \label{D1D2.new} D_1\ne D\  \ {\rm and} \ \  D_2\ne D.
 \end{equation}
 By \eqref{D1D2D.new} and \eqref{D1D2.new}, $D_1\backslash  D_2\ne \emptyset$ and $D_2\backslash D_1\ne \emptyset$.
 Taking $ x_1\in D_1\backslash  D_2$ and $y_1\in D_2\backslash D_1$, we obtain from \eqref{D1D2.def0} that
 \begin{eqnarray*}\Re (f(x_1) \bar f(y_1)-g(x_1)\bar g(y_1)) & \hskip-0.08in = & \hskip-0.08in  \Re \big(f(x_1) (\bar  f(y_1)- f(y_1))\big)\nonumber\\
& \hskip-0.08in = & \hskip-0.08in  2\Im f(x_1) \Im f(y_1)\ne 0,
 \end{eqnarray*}
 which contradicts to the statement (ii).

\subsection{Proof of Theorem \ref{necess.charac}} \label{necess.charac.section}
 Take   a nonzero function $g$ in the real linear space spanned by the real and imaginary parts of $f$.   Then
there exists a  nonzero complex number $z$ such that
$g=zf+\bar z \bar f$.
Without loss of generality, we assume that
$z=1$,
as
$ z f$
is conjugate phase retrieval in ${\mathcal C}$. 
Suppose, on the contrary,  that  $g=f+\bar f$ is not   phase retrieval in $\Re({\mathcal C})$.
Then  there exist nonzero functions $f_1, f_2\in \Re({\mathcal C})$  by \eqref{realnonseparable} such that
 \begin{equation}  \label{necess.charac.pf.eq1}
g= f_1+f_2\ \  {\rm and} \ \  f_1f_2=0.
 \end{equation}
 Write
 \begin{equation} \label {necess.charac.pf.eq2+}f=u+v,
 \end{equation}
 where
 \begin{equation} \label{necess.charac.pf.eq2}
 u= \frac{f-\bar f+ f_1}{2} \ {\rm and} \  v=\frac{f_2}{2}.
 \end{equation}
 By  \eqref{necess.charac.pf.eq1} and \eqref {necess.charac.pf.eq2}, we have
 \begin{equation} \label{necess.charac.pf.eq3}
 \Re (u(x) \bar v(x))= \frac{1}{4} \Re \big((f(x)-\bar f(x)+f_1(x)) f_2(x)\big)= \frac{1}{4} f_1(x)f_2(x)=0
 \end{equation}
 for all $x\in D$,
 where the  second equality holds  as  $ i(f(x)-\bar f(x)), f_1(x), f_2(x)\in \R$ for all $x\in D$.
 Take $x_0, y_0\in D$ such that
 \begin{equation} \label{necess.charac.pf.eq4}
 f_1(x_0)\ne 0\ {\rm and} \ f_2(y_0)\ne 0.\end{equation}
 Then
 \begin{eqnarray} \label{necess.charac.pf.eq5}
\Re(u(x_0) \bar v(y_0)+u(y_0) \bar v(x_0)) & \hskip-0.08in = & \hskip-0.08in \frac{ f_1(x_0) f_2 (y_0)+f_1(y_0)  f_2(x_0)}{4}\nonumber\\
& \hskip-0.08in = & \hskip-0.08in
\frac{f_1(x_0) f_2 (y_0)}{4}\ne 0
\end{eqnarray}
by \eqref{necess.charac.pf.eq1}, \eqref {necess.charac.pf.eq2} and \eqref{necess.charac.pf.eq4}.
By \eqref{necess.charac.pf.eq2+}, \eqref{necess.charac.pf.eq3}, \eqref{necess.charac.pf.eq5} and Theorem \ref{complexpr.thm},
we obtain that $f$ is not  complex conjugate  phase retrieval, which is a contradiction.

\subsection{Proof of Proposition \ref{mtwopr.prop}}\label{mtwopr.prop.section}
 The sufficiency follows from  Corollary \ref{matrixprojection.cor}.
Now we prove the necessity.   Write ${\bf A}^T=({\bf a}_1, \ldots, {\bf a}_m)$.
By the assumption, the matrix ${\bf A}$ has the complement property \cite{BCE06}.  
Hence the matrix  ${\bf A}$ has rank $2$ and
 the linear space of all real symmetric matrices is spanned by
${\bf a}_i {\bf a}_j^T+ {\bf a}_j {\bf a}_i^T, 1\le i, j\le m$.
By Corollary \ref{corollary2.4}, it suffices to prove that
${\bf a}_i {\bf a}_j^T+ {\bf a}_j {\bf a}_i^T, 1\le i, j\le m$, are
linear combinations of
${\bf a}_k {\bf a}_k^T, 1\le k\le m$.
By the complement property, without loss of generality, we assume that
$\span \{{\bf a}_1, {\bf a}_2\}=\R^2$
and $ {\bf a}_3=\lambda {\bf a}_1+\mu {\bf a}_2$ for some nonzero real numbers $\lambda$ and $\mu$.
Then
\begin{equation}\label{a3.eq}
{\bf a}_1 {\bf a}_2^T+  {\bf a}_2{\bf a}_1^T=(\lambda\mu)^{-1} {\bf a}_3 {\bf a}_3^T- { \lambda\mu^{-1}}
{\bf a}_1 {\bf a}_1^T- {\lambda^{-1}\mu} {\bf a}_2 {\bf a}_2^T.
\end{equation}
By the assumption on ${\bf a}_1$ and ${\bf a}_2$, there exist $\lambda_i, \mu_i, 1\le i\le m$ such that
${\bf a}_i=\lambda_i {\bf a}_1+\mu_i {\bf a}_2$, which together with \eqref{a3.eq} implies that
\begin{eqnarray*}
 {\bf a}_i {\bf a}_j^T+ {\bf a}_j {\bf a}_i^T  & \hskip-0.08in =  & \hskip-0.08in 2 \lambda_i \lambda_j {\bf a}_1 {\bf a}_1^T
+ (\lambda_i \mu_j+\lambda_j \mu_i) ({\bf a}_1 {\bf a}_2^T+  {\bf a}_2{\bf a}_1^T)+
2 \mu_i \mu_j {\bf a}_1 {\bf a}_1^T\nonumber\\
& \hskip-0.08in \in  & \hskip-0.12in   \span \{ {\bf a}_l{\bf a}_l^T, l=1, 2,3\},\ \ 1\le i, j\le m.
\end{eqnarray*}
This completes the proof.

\subsection{Proof of Proposition \ref{complex.sis.pr}}\label{complex.sis.pr.pfsection}
To prove Proposition \ref{complex.sis.pr}, we recall a characterization for a  phase retrieval  function in $S(h)$.

\begin{lem}\label{complex.sis.lem}{\rm \cite[Theorem 3.2]{YC16}}
Let $g(t)=\sum_{k\in \Z} d(k) h(t-k)\in S(h),  d(k) \in \R$. Then $g$ is  phase retrieval in $S(h)$ if and only if
\begin{equation}  \label{complex.sis.lem.eq1}
d(k)\ne 0  \ {\rm for \ all} \ K_--1< k<K_++1\end{equation}
where $K_-=\inf\{k, \ d(k)\neq 0\}$ and $K_+=\sup\{k, \  d(k)\neq 0\}$.
\end{lem}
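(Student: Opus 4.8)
The plan is to deduce Lemma~\ref{complex.sis.lem} from the scalar separability characterization Corollary~\ref{scalepr.cor}, taking $\Phi=\{\delta_x:x\in\R\}$ to be the point-evaluation functionals. For this choice \eqref{unitary.injective} holds trivially, since a function in $S(h)$ is determined by its samples, and the separation condition \eqref{separable.def00} reads $u(t)v(t)=0$ for all $t$. Thus the reduction is: $g$ is phase retrieval in $S(h)$ if and only if there is \emph{no} decomposition $g=u+v$ into nonzero $u,v\in S(h)$ with $u(t)v(t)=0$ for all $t\in\R$. Throughout I will use two elementary structural facts about $w=\sum_k d(k)h(\cdot-k)\in S(h)$: it is continuous and \emph{affine} on each interval $[n,n+1]$, and its nodal values are $w(n)=d(n)$. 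I will prove the contrapositive in both directions, namely that $g$ fails to be phase retrieval precisely when \eqref{complex.sis.lem.eq1} fails.

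For the necessity (if \eqref{complex.sis.lem.eq1} fails then $g$ is not phase retrieval), suppose there is an interior gap, i.e.\ some $k_0$ with $K_-<k_0<K_+$ and $d(k_0)=0$. I set $u=\sum_{k<k_0}d(k)h(\cdot-k)$ and $v=\sum_{k>k_0}d(k)h(\cdot-k)$, so that $g=u+v$ because the deleted $k_0$-term vanishes. The summand $u$ is supported on $(-\infty,k_0]$ and $v$ on $[k_0,\infty)$, and at the single overlap point $t=k_0$ one has $u(k_0)=v(k_0)=0$ since $h(k_0-k)=0$ whenever $|k_0-k|\ge 1$; hence $u(t)v(t)=0$ for all $t$. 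Both are nonzero because $u$ contains the term $d(K_-)\neq 0$ and $v$ the term $d(K_+)\neq 0$. By Corollary~\ref{scalepr.cor}, $g$ is not phase retrieval.

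For the sufficiency (if \eqref{complex.sis.lem.eq1} holds then $g$ is phase retrieval), I assume $d(k)\neq 0$ for all $K_-\le k\le K_+$ and take any decomposition $g=u+v$ with $u(t)v(t)=0$ everywhere; the goal is to force $u\equiv 0$ or $v\equiv 0$. I record two facts. First, on each $[n,n+1]$ both $u,v$ are affine, so their product is a polynomial of degree $\le 2$; vanishing identically forces one of the two affine factors to be the zero polynomial, i.e.\ $u\equiv 0$ or $v\equiv 0$ on $[n,n+1]$. Second, wherever $g(t)=0$ one has $u(t)=-v(t)$, whence $u(t)v(t)=-v(t)^2=0$ gives $u(t)=v(t)=0$. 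Now for each \emph{active} interval $[n,n+1]$ with $K_-\le n\le K_+-1$ one has $g(n)=d(n)\neq 0$ and $g(n+1)=d(n+1)\neq 0$, so by the first fact exactly one of $u,v$ equals $g$ there and the other vanishes; label the interval accordingly. At an interior node $n$ with $K_-<n\le K_+-1$, $g(n)=d(n)\neq 0$ forces $u(n)$ and $v(n)$ not both zero, and this rules out a label change across $n$; hence all active intervals carry one common label, say $v\equiv 0$ on $[K_-,K_+]$. Finally the second fact gives $v=0$ on the ramp $[K_--1,K_-]$ (its endpoints $K_--1$ and $K_-$ are zeros of $v$, and $v$ is affine there), likewise on $[K_+,K_++1]$, and on the exterior where $g\equiv 0$; therefore $v\equiv 0$ everywhere, the desired contradiction.

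I expect the sufficiency direction, and specifically the label-propagation step across the block $\{K_-,\dots,K_+\}$, to be the crux: this is the only place the no-gap hypothesis enters, precisely because a gap $d(k_0)=0$ would make $g(k_0)=0$ and permit a change of label at that node, reopening the splitting used in the necessity argument. The necessity direction is a one-line explicit construction. I will also dispose of the degenerate cases separately: if $g\equiv 0$ or has a single nonzero coefficient ($K_-=K_+$) the statement is immediate, and the local arguments (the two facts and the propagation) apply verbatim when the support is one- or two-sided infinite, the only change being that the corresponding ramp endpoints are absent.
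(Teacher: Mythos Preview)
Your proof is correct. Note that the paper does not actually prove this lemma: it is quoted verbatim from \cite[Theorem 3.2]{YC16} and used as a black box in the proof of Proposition~\ref{complex.sis.pr}. So there is no in-paper argument to compare against.

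That said, your argument is self-contained relative to the present paper, since it rests only on Corollary~\ref{scalepr.cor} (the scalar separability characterization) together with the piecewise-affine structure of $S(h)$. Both directions are handled cleanly: the necessity via the explicit split at an interior zero node, and the sufficiency via the observation that on each unit interval the product $uv$ is a quadratic polynomial, hence one affine factor vanishes identically there, followed by the label-propagation across nodes where $g(n)=d(n)\neq 0$. The treatment of the ramps $[K_--1,K_-]$ and $[K_+,K_++1]$ and of the exterior via the ``$g=0\Rightarrow u=v=0$'' observation is also correct. The degenerate and infinite-support cases are dispatched appropriately.
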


\begin{proof}[Proof of Proposition \ref{complex.sis.pr}]  $\Longrightarrow$: \quad  Suppose, on the contrary, that \eqref{complex.sis.pr.eq2} does not hold.
Then  there exist $k_0, k_1, k_2\in (K_--1, K_++1)$ such that
\begin{equation}\label{complex.sis.pr.pfeq1}
k_0<k_1<k_2, \  c(k_0)c(k_2)\ne 0 \ \ {\rm  and} \ \ c(k_1)=0.
\end{equation}
Take $z_1\in \T$ with
\begin{equation}\label{complex.sis.pr.pfeq2} \Re(z_1 c(k_0))\ne 0 \ {\rm and} \  \Re(z_1 c(k_2))\ne 0.
\end{equation}
Then
$$\Re(z_1f(t))=\sum_{k<k_1} \Re(z_1 c(k)) h(t-k)+ \sum_{k>k_1} \Re(z_1 c(k)) h(t-k)$$
is not  phase retrieval in $S(h)$ by Lemma \ref{complex.sis.lem}, which is a contradiction by Theorem \ref{necess.charac}.
This completes the proof of \eqref{complex.sis.pr.eq2}.


For any $l_0\in (K_--1, K_++1)$, it follows from Theorem \ref{necess.charac}
that
$$\Re\big(i \bar c(l_0) f(t)\big)=\Big(\sum_{k<l_0}+\sum_{k>l_0}\Big) \Re\big(i \bar c(l_0) c(k)\big) h(t-k)$$
is  phase retrieval in $S(h)$. Hence by Lemma \ref{complex.sis.lem}, either
$\Re\big(i \bar c(l_0) c(k)\big)=0$ for all $k<l_0$ or
$\Re\big(i \bar c(l_0) c(k)\big)=0$ for all $k>l_0$.  This proves \eqref{complex.sis.pr.eq3}.

$\Longleftarrow$: \quad  We prove the sufficiency by two cases.
\smallskip

{\bf Case 1}: \  $\Im \big(c(k)\overline{c(k+1)}\big)=0$ for all $k\in (K_--1, K_+)$.

Without loss of generality, we assume that $c(k)\in \R$ for all $k\in \Z$, otherwise replacing $f$ by $zf$ for some  $z\in \T$. 
Let  $g\in S_{\C}(h)$ such that
\begin{equation}\label{gf.equation0}
|g(t)|= |f(t)|, \  t\in \R.
\end{equation}
Take $k_0\in \Z$ with  $f(k_0)\ne 0$. Without loss of generality, we assume that
\begin{equation}\label{gko.eq} g(k_0)=f(k_0),\end{equation} otherwise replacing  $g$ by $\tilde g=\frac{f(k_0)}{g(k_0)} g$.
By \eqref{gf.equation0} with $t$ replaced by $k_0+u\in k_0+[0, 1]$,
\begin{equation*}
|f(k_0) (1-u)+ f(k_0+1) u|^2= |g(k_0) (1-u) + g(k_0+1) u|^2, u\in [0, 1].
\end{equation*}
This together with $0\ne g(k_0)=f(k_0)\in \R$ and $f(k_0+1)\in \R$ implies that
$ g(k_0+1)=f(k_0+1)$.
Applying the above argument inductively, we have
 \begin{equation}\label{gko.eq1}
 g(k)=f(k) \ {\rm for \ all} \  k\in (k_0, K_++1). \end{equation}

Applying \eqref{gf.equation0} with $t$ replaced by $k_0-u \in k_0+[-1, 0]$ and
using $0\ne g(k_0)=f(k_0)\in \R$ and $f(k_0-1)\in \R$, we can show that
$ g(k_0-1)=f(k_0-1)$.
Thus we have the following conclusion by  induction,
 \begin{equation}\label{gko.eq2}
  g(k)=f(k), k\in (K_--1, k_0).\end{equation}
 By \eqref{gf.equation0},  we have
  \begin{equation}\label{gko.eq3}
  g(k)=f(k)=0, k\not\in (K_--1, K_++1). \end{equation}
 Combining \eqref{gko.eq}--\eqref{gko.eq3} proves $g=f$, which completes the proof.

  \smallskip
  {\bf Case 2}: \  There exists $k_0\in (K_--1, K_+)$ such that
   $\Im \big(c(k_0)\overline{c(k_0+1)}\big)\ne 0$ and  $\Im \big(c(k)\overline{c(k+1)}\big)=0$ for all $k\in (K_--1, K_+)\setminus \{k_0\}$.

Without loss of generality, we assume that $c(k)\in \R$ for all $k\le k_0$, otherwise replacing $f$ by $zf$ for some $z\in \T$.
Let  $g\in S_{\C}(h)$ such that \eqref {gf.equation0} holds.
Without loss of generality, we assume that $0\neq g(k_0)=f(k_0)\in \R$, otherwise replacing  $g$ by $\tilde g=\frac{f(k_0)}{g(k_0)} g$.
Following the argument used in Case 1, we can show that
\begin{equation}\label{gko.eq4}
g(k)=f(k)\in \R,  \ k\le k_0.
\end{equation}
Similarly,  with $t$ replaced by $k_0+u\in k_0+[0, 1]$ in \eqref{gf.equation0}  and $0\ne g(k_0)=f(k_0)\in \R$, we obtain
\begin{equation*}
\Re g(k_0+1)= \Re f(k_0+1) \ {\rm and} \ |g(k_0+1)|=|f(k_0+1)|.
\end{equation*}
Hence either
\begin{equation}\label {gko.eq5}
g(k_0+1)= f(k_0+1)
\end{equation}
or
\begin{equation} \label {gko.eq6}
g(k_0+1)= \bar f(k_0+1).
\end{equation}
For the case that \eqref{gko.eq5} holds, we can
follow the argument used in Case 1 to prove
\begin{equation}\label {gko.eq7}
g(k)= f(k), k\ge k_0+1.
\end{equation}
Similarly for the case that \eqref{gko.eq6} holds, we have
\begin{equation}\label {gko.eq8}
g(k)=\bar  f(k), k\ge k_0+1.
\end{equation}
Combining \eqref{gko.eq4}, \eqref{gko.eq7} and \eqref{gko.eq8}, we conclude that
either
$g(k)=f(k)$ for all $k\in \Z$ or $g(k)=\bar f(k)$ for all $k\in \Z$.
Therefore either $g=f$ or $g=\bar f$. This completes the proof of conjugate phase retrieval for  Case 2 and  completes the proof of sufficiency.
\end{proof}

\subsection{Proof of Theorem \ref{quaternionpr.thm}} \label{quaternionpf.thm.pfsection}


The Conclusion (ii) follows from the first conclusion. Then it suffices to prove the Conclusion (i).
 Let the linear subspace  $\mathcal W_\R \subset {\mathcal W}$ contain all real functions in $\mathcal W$, and ${\mathcal W}_\R^4$ be the product space of $\mathcal W_\R$. Obviously, ${\mathcal W}_\R^4$ is  unitary invariant.
By  \eqref{f1234.def}  and the isomporphism
$$z=a+b{\bf i}+c{\bf j}+d {\bf k}\longmapsto (a, b, c, d)^T\in \R^4$$
between $Q_8$ and $\R^4$,
 the map   $T_{\mathcal W}$ defined by
$$  T_{\mathcal W}: {\mathcal W}_\R^4\ni (f_1, f_2, f_3, f_4)^T\longmapsto f_1+f_2{\bf i}+f_3 {\bf j}+f_4{\bf k}\in {\mathcal W}$$
is an isomorphism between ${\mathcal W}_\R^4$ and ${\mathcal W}$,
 and
for any ${\bf f}, {\bf g}\in {\mathcal W}_\R^4$,
$$ \langle {\bf f}(x), {\bf g}(y)\rangle= \Re \big(T_{\mathcal W}({\bf f})(x) T_{\mathcal W}({\bf g})^\ast(y)\big),  x, y\in D.
$$
Then it suffices to  prove that
 ${\bf f}=(f_1, f_2, f_3, f_4)^T\in {\mathcal W}_\R^4$ is
phase retrieval in ${\mathcal W}_\R^4$ if and only if $T_{\mathcal W}(\bf f)$ is quaternion conjugate phase retrieval in ${\mathcal W}$, which in turn reduces to prove
\begin{equation}\label{abcd0.eq}
\{ T_{\mathcal W}(U{\bf f}), U\in {\mathcal U}(\R^4)\}= \Big\{ \sum_{i=1}^4 q_i f_i\in  {\mathcal W},
q_i\in {\T}_8, 1\le i\le 4, \ {\rm satisfies\ \eqref{qiqj.def}}\Big\}.
\end{equation}
Write $U=(u_{ij})_{1\le i, j\le 4}$, then
\begin{eqnarray*} 
 T_{\mathcal W}(U{\bf f})\hskip-0.05in & = \hskip-0.05in&\hskip-0.05in  \sum_{j=1}^4 u_{1j}f_j+ \sum_{j=1}^4 u_{2j}f_j {\bf i}+ \sum_{j=1}^4 u_{3j}f_j {\bf j}+\sum_{j=1}^4 u_{4j}f_j {\bf k}
\nonumber\\
\hskip-0.05in& =\hskip-0.05in & \hskip-0.05in \sum_{j=1}^4 (u_{1j}+u_{2j}{\bf i}+ u_{3j} {\bf j}+ u_{4j}{\bf k}) f_j=:\sum_{j=1}^4 p_j f_j.
\end{eqnarray*}
As $U$ is an orthogonal matrix, we have
$$\|p_j\|^2= |u_{1j}|^2+ |u_{2j}|^2+|u_{3j}|^2+|u_{4j}|^2=1$$
for all $1\le j\le 4$,
and
$$p_i p_j^*+ p_j p_i^*= 2 \Re(p_i p_j^*)=2(u_{1i}u_{1j}+u_{2i}u_{2j}+u_{3i}u_{3j}+u_{4i}u_{4j})=0
$$
for $1\le i<j\le 4$.  This proves that
\begin{equation}\label {abcd1.eq}
\{ T_{\mathcal W}(U{\bf f}), U\in {\mathcal U}(\R^4)\}\subset \Big\{ \sum_{i=1}^4 q_i f_i\in  {\mathcal W},
q_i\in {\T}_8, 1\le i\le 4, \ {\rm satisfies\ \eqref{qiqj.def}}\Big\}.
\end{equation}

Let $ q_i\in {\T}_8, 1\le i\le 4$ satisfy \eqref{qiqj.def}.
Write $q_i=u_{1i}+ u_{2i} {\bf i}+ u_{3i} {\bf j}+ u_{4i} {\bf k}, 1\le i\le 4$.
Then it follows from the assumption on $q_i, 1\le i\le 4$ that
\begin{equation*}
u_{1i} u_{1j}+ u_{2i} u_{2j}+u_{3i} u_{3j}+u_{4i} u_{4j}=\left\{\begin{array} {ll} 1 & j=i\\
0 & j\ne i.\end{array}\right.
\end{equation*}
Hence $U=(u_{ij})_{1\le i, j\le 4}$ is  an orthogonal matrix on $\R^4$.
Moreover, we  have that
$$\sum_{i=1}^4 q_i f_i= \Big(\sum_{j=1}^4u_{1j} f_j\Big)+ \Big(\sum_{j=1}^4u_{2j} f_j\Big){\bf i}+
\Big(\sum_{j=1}^4u_{3j} f_j\Big){\bf j}+ \Big(\sum_{j=1}^4u_{4j} f_j\Big){\bf k}= T_{\mathcal W}(U{\bf f}).
$$
This proves that
\begin{equation}\label{abcd2.eq}
\Big\{ \sum_{i=1}^4 q_i f_i\in  {\mathcal W},
q_i\in {\T}_8, 1\le i\le 4, \ {\rm satisfies\ \eqref{qiqj.def}}\Big\}\subset \{ T_{\mathcal W}(U{\bf {\bf f}}), U\in {\mathcal U}(\R^4)\}.
\end{equation}
Combining \eqref{abcd1.eq} and \eqref{abcd2.eq} proves  \eqref{abcd0.eq} and  completes the proof.

\subsection{Proof of Theorem \ref{affinepfmultiple.thm}}\label{affinepfmultiple.thm.pfsection}
For we prove the sufficiency.  Take $f\in {\mathcal S}$ and $g\in  {\mathcal A}_f$.
 Then  for all $1\le i, j\le N$ and $\phi\in \Phi$, we have
\begin{eqnarray*}
\langle \phi(g), b_{\phi, i}-b_{\phi,j}\rangle & \hskip-0.08in =
&  \hskip-0.08in \frac{\|\phi(g)+b_{\phi,i}\|^2-\|\phi(g)+b_{\phi, j}\|^2-\|b_{\phi,i}\|^2-\|b_{\phi,j}\|^2}{2}\\
&  \hskip-0.08in  =&   \hskip-0.08in  \frac{\|\phi(f)+b_{\phi, i}\|^2-\|\phi(f)+b_{\phi, j}\|^2-\|b_{\phi,i}\|^2-\|b_{\phi,j}\|^2}{2}\\
&  \hskip-0.08in  =&   \hskip-0.08in \langle \phi(f), b_{\phi, i}-b_{\phi,j}\rangle.
\end{eqnarray*}
Therefore by the injectivity of the map $T$ in \eqref{Tijaffine.def}, we obtain that $g=f$, which proves the
affine phase retrieval of the linear space  ${\mathcal S}$.

 Now we prove the necessity. Let $f_0\in {\mathcal S}$ and $1\le i_0\le N$ be as in \eqref{bphiio.def}.
 Suppose, on the contrary, that the linear map $T$ in \eqref{Tijaffine.def} is not injective. Then there exists a nonzero function $h_0\in {\mathcal S}$ such that
 \begin{equation}\label{affinepfmultiple.thm.pf.eq2}
 \langle \phi(h_0), b_{\phi, i}-b_{\phi, j} \rangle=0 \ {\rm for \ all} \ \phi\in \Phi \ {\rm and} \ 1\le i, j\le N.
  \end{equation}
 Set
 $f=-f_0+h_0/2$ and $g=-f_0-h_0/2\in {\mathcal S}$.
 Then $g\ne f$ and $g\in {\mathcal N}_f$, since for all $\phi\in \Phi$ and $1\le i\le N$,
\begin{eqnarray}
& & \|\phi(f)+b_{\phi, i}\|^2- \|\phi(g)+b_{\phi, i}\|^2\nonumber \\
& = &  \|\phi(h_0)/2+ b_{\phi, i}- b_{\phi, i_0}\|^2-
\|-\phi(h_0)/2+ b_{\phi, i}- b_{\phi, i_0}\|^2\nonumber\\
& = & 2 \langle \phi(h_0), b_{\phi, i}- b_{\phi, i_0}\rangle=0
\end{eqnarray}
by \eqref{bphiio.def} and \eqref{affinepfmultiple.thm.pf.eq2}. This contradicts to the affine phase retrieval
of the linear space ${\mathcal S}$.

\end{document}